\newcommand{\C}{\mathbb{C}}
\newcommand{\K}{\mathbb{K}}
\renewcommand{\AA}{\mathcal{A}}
\newcommand{\GG}{\mathcal{G}}
\newcommand{\HH}{\mathcal{H}}
\newcommand{\TT}{\mathcal{T}}
\newcommand{\UU}{\mathcal{U}}
\newcommand{\YY}{\mathcal{Y}}
\newcommand{\B}{\mathbf{B}}
\newcommand{\G}{\mathbf{G}}
\renewcommand{\H}{\mathbf{H}}
\newcommand{\T}{\mathbf{T}}
\newcommand{\U}{\mathbf{U}}
\newcommand{\GL}{\mathrm{GL}}
\newcommand{\SL}{\mathrm{SL}}
\renewcommand{\u}{\mathrm{U}}
\newcommand{\M}{\mathrm{M}}
\newcommand{\Cl}{\mathit{Cl}}
\newcommand{\WD}{\mathit{WD}}
\newcommand{\Irr}{\mathrm{Irr}}
\newcommand{\temp}{\mathrm{temp}}
\newcommand{\diag}{\mathrm{diag}}
\newcommand{\srs}{\mathrm{srs}}
\renewcommand{\ss}{\mathrm{ss}}
\newcommand{\st}{\mathrm{st}}
\newcommand{\Res}{\mathrm{Res}}
\newcommand{\Gal}{\mathrm{Gal}}
\newcommand{\vol}{\mathrm{vol}}
\newcommand{\id}{\mathrm{id}}
\newcommand{\tr}{\mathrm{tr}}
\newcommand{\Hom}{\mathrm{Hom}}
\newcommand{\Pic}{\mathrm{Pic}}
\newcommand{\iif}{&\quad&\text{if }}
\newcommand{\resp}{resp.~}
\renewcommand{\1}{\mathbf{1}}
\newcommand{\ep}{\varepsilon}
\newcommand{\bs}{\backslash}
\newcommand{\oo}{\mathfrak{o}}
\newcommand{\pp}{\mathfrak{p}}
\newcommand{\pair}[1]{\left\langle #1 \right\rangle}
\newcommand{\tl}[1]{\tilde{#1}}
\newtheorem{thm}{Theorem}[section]
\newtheorem{lem}[thm]{Lemma}
\newtheorem{prop}[thm]{Proposition}
\newtheorem{cor}[thm]{Corollary}
\newtheorem{defi}[thm]{Definition}
\newtheorem{rem}[thm]{Remark}
\title[Local newforms for odd unitary groups and Fundamental Lemma]
{Local newforms for generic representations of unramified odd unitary groups and Fundamental Lemma}
\author{
Hiraku Atobe, 
Masao Oi, \and
Seidai Yasuda
}
\date{}
\subjclass[2010]{Primary 22E50; Secondary 11S37}
\keywords{Local newforms; Fundamental lemma}
\address{
Department of Mathematics, Hokkaido University,
Kita 10, Nishi 8, Kita-Ku, Sapporo, Hokkaido, 060-0810, Japan 
}
\address{
Department of Mathematics (Hakubi center), Kyoto University, 
Kitashirakawa, Oiwake- cho, Sakyo-ku, Kyoto 606-8502, Japan
}
\address{
Department of Mathematics, Hokkaido University,
Kita 10, Nishi 8, Kita-Ku, Sapporo, Hokkaido, 060-0810, Japan 
}
\email{
atobe@math.sci.hokudai.ac.jp
}
\email{
masaooi@math.kyoto-u.ac.jp
}
\email{
sese@math.sci.hokudai.ac.jp
}
\begin{document}
\maketitle

\begin{abstract}
In this paper, 
we establish the theory of local newforms 
for irreducible tempered generic representations of 
unramified odd unitary groups over a non-archimedean local field. 
For the proof, we prove an analogue of the fundamental lemma for our compact groups.
\end{abstract}

\section{Introduction}
For a connection between classical modular forms and automorphic representations, 
it is a significant problem in representation theory of $p$-adic groups
to establish the theory of \emph{local newforms}.
It was initiated by Casselman \cite{C} for $\GL_2$, 
and generalized by Jacquet--Piatetskii-Shapiro--Shalika \cite{JPSS} to the case for $\GL_N$ (see also \cite{J}).
For other groups, there are some progresses: 

\begin{itemize}
\item
Roberts--Schmidt \cite{RS1,RS2} established the theory of local newforms 
for $\mathrm{PGSp}_4$ and for $\widetilde{\SL}_2$, which is the double cover of $\SL_2$. 

\item
Okazaki \cite{Ok} generalized results in \cite{RS1} to $\mathrm{GSp}_4$. 

\item
Unramified $\mathrm{U}(1,1)$ and $\mathrm{U}(2,1)$ 
were treated by Lansky--Raghuram \cite{LR} and 
Miyauchi \cite{M1,M2,M3,M4}, respectively.

\item
Tsai \cite{Ts} proved Gross' conjecture \cite{G} on 
the local newforms for $\mathrm{SO}_{2n+1}$
for generic supercuspidal representations. 

\item
Recently, the first and third authors together with Kondo \cite{AKY}
extended results in \cite{JPSS} to all irreducible representations of $\GL_N$. 
\end{itemize}
\vskip 10pt

In this paper, we treat unramified unitary groups $\u_{2n+1}$ of odd variables. 
Let us describe our results more precisely. 
Let $E/F$ be an unramified quadratic extension 
of non-archimedean local fields of characteristic $0$ and 
of residue characteristic $p > 2$.
Denote by $\oo_E$ and $\pp_E$ the ring of integers of $E$ and its maximal ideal, respectively.
We consider a quasi-split unitary group
\[
H = \u_{2n+1} = \{g \in \GL_{2n+1}(E) \;|\; g J_{2n+1} {}^t\overline{g} = J_{2n+1}\}
\]
with 
\[
J_N = 
\begin{pmatrix}
&&&1\\
&&-1&\\
&\iddots&&\\
(-1)^{N-1}&&&
\end{pmatrix}.
\]
Set $\K_{0,H} = \GL_{2n+1}(\oo_E) \cap H$. 
For a positive integer $m$, we define 
\[
\K_{m,H} = 
\bordermatrix{
&n&1&n \cr
n&\oo_E&\oo_E&\pp_E^{-m}\cr
1&\pp_E^m&1+\pp_E^m&\oo_E\cr
n&\pp_E^m&\pp_E^m&\oo_E
} \cap H.
\]
\vskip 10pt

The following is the first main result. 
\begin{thm}\label{main1}
Let $\pi$ be an irreducible tempered representation of $H$. 
\begin{enumerate}
\item
If $\pi$ is not generic, then $\pi^{\K_{m,H}} = 0$ for all $m \geq 0$. 
\item
Suppose that $\pi$ is generic. 
We denote by $c(\phi)$ the conductor of the $L$-parameter $\phi$ of $\pi$ 
(see Section \ref{sec.mult1}). 
Then 
\[
\dim(\pi^{\K_{m,H}}) = 
\left\{
\begin{aligned}
&0 \iif m < c(\phi), \\
&1 \iif m = c(\phi).
\end{aligned}
\right. 
\]
Moreover, if $m > c(\phi)$, then $\pi^{\K_{m,H}} \not= 0$.
We call a nonzero element in $\pi^{\K_{c(\phi),H}}$ a \emph{local newform} of $\pi$. 
\end{enumerate}
\end{thm}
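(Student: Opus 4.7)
My plan is to reduce Theorem \ref{main1} to the Jacquet--Piatetskii-Shapiro--Shalika newform theory on $\GL_{2n+1}(E)$ via the stable base change from $H = \u_{2n+1}$ to $\GL_{2n+1}(E)$. Concretely, I would write $\dim(\pi^{\K_{m,H}}) = \tr(\pi(e_{\K_{m,H}}))$ for the normalized idempotent $e_{\K_{m,H}} = \vol(\K_{m,H})^{-1}\1_{\K_{m,H}}$, sum this trace over the tempered $L$-packet $\Pi_\phi$ containing $\pi$, and apply the twisted endoscopic character identity (Mok; Kaletha--Minguez--Shin--White) to rewrite the sum $\sum_{\pi' \in \Pi_\phi} \tr(\pi'(e_{\K_{m,H}}))$ as a twisted trace of the base change lift $\Pi$ on $\GL_{2n+1}(E)$, evaluated at a function $f'$ on $\GL_{2n+1}(E)$ matching $e_{\K_{m,H}}$ under endoscopic transfer.

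The technical heart of this step, and the ``fundamental lemma'' advertised in the abstract, is the assertion that $f'$ can be taken to be (essentially) the characteristic function of the JPSS congruence subgroup
\[
K_{1,m} = \bigl\{g \in \GL_{2n+1}(\oo_E) : \text{last row of } g \equiv (0,\dots,0,1) \pmod{\pp_E^m}\bigr\},
\]
twisted by the Galois involution $\theta(g) = J_{2n+1}\,{}^t\bar{g}^{-1}\,J_{2n+1}^{-1}$ that cuts out $H$ inside $\GL_{2n+1}(E)$. I would prove this matching by comparing (twisted) orbital integrals at regular semisimple elements whose stable conjugacy classes correspond under the norm map, reducing it to an explicit lattice count in the Bruhat--Tits building; the unramifiedness of $E/F$ and the controlled ``Klingen-like'' shape of $\K_{m,H}$ (pro-$p$-deformation of the hyperspecial $\K_{0,H}$ in directions away from the middle row) should keep this count tractable.

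Once the fundamental lemma is in hand, the theorem follows. On the $\GL_{2n+1}(E)$ side, JPSS (as extended in \cite{AKY}) computes the relevant twisted trace as $\dim \Pi^{K_{1,m}}$, which is $0$ for $m < c(\Pi)$, one-dimensional at $m = c(\Pi)$, and positive beyond; since $c(\phi) = c(\Pi)$ for the base change lift, these values transfer to the unitary side via the character identity. The fact that a tempered $L$-packet for quasi-split $\u_{2n+1}$ contains a unique generic member (after Shahidi's Whittaker normalization), together with a direct construction of a $\K_{c(\phi),H}$-fixed vector from the Whittaker model of $\pi$, forces the entire $\K_{m,H}$-fixed contribution to concentrate on this generic member; this simultaneously establishes part (2) and the vanishing of $(\pi')^{\K_{m,H}}$ for non-generic members $\pi' \in \Pi_\phi$, which is part (1).

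The main obstacle I anticipate is the fundamental lemma itself. The subgroup $\K_{m,H}$ is not a standard Moy--Prasad group and mixes ``deep'' congruence conditions (the $\pp_E^m$ blocks) with ``shallow'' ones (the $\pp_E^{-m}$ block), so the twisted orbital integrals on $\GL_{2n+1}(E)$ that need to be computed are not directly covered by Ng\^o's Hecke-algebra fundamental lemma and will have to be stratified according to how the centralizer of a regular semisimple element intersects the lattices defining $\K_{m,H}$. A secondary but nontrivial difficulty is to make the endoscopic character identity sharp enough to single out the generic member of $\Pi_\phi$ as the sole carrier of $\K_{m,H}$-fixed vectors, rather than yielding only an equality of packet sums; the Whittaker construction mentioned above is designed to fill exactly this gap.
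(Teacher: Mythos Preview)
Your overall strategy---transfer $e_{\K_{m,H}}$ to the twisted $\GL_{2n+1}(E)$ side via an explicit fundamental lemma, apply Mok's character identity, and read off the packet sum $\sum_{\pi'\in\Pi_\phi}\dim((\pi')^{\K_{m,H}})$ from JPSS---is exactly what the paper does (their Theorem \ref{transfer} and Theorem \ref{packet=1}). Two points, however, deserve comment.

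\textbf{The fundamental lemma.} The paper does not attack the matching by a direct lattice count in the building. Instead it uses the topological Jordan decomposition to descend the orbital integrals on both sides to the centralizers of topologically semisimple elements (Lemma \ref{descent}), and then proves that the relevant topologically semisimple $\K_m$- and $\K_{m,H}$-conjugacy classes are in bijection (Proposition \ref{bij}), reducing everything to a (trivial) identity on the centralizers. The key linear-algebra step (Proposition \ref{eigen=1}) is that any topologically semisimple element of $\tl\K_m$ or $\K_{m,H}$ is conjugate into the block-diagonal $\GL_{2n}$-part, so one ends up comparing hyperspecial subgroups on both sides and invoking Kottwitz's \cite[Proposition 7.1]{K-stable} and its twisted analogue. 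Your proposed lattice stratification may well work, but this descent method is what actually keeps the computation short.

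\textbf{A genuine gap in your argument for part (1).} Your plan to isolate the generic member by constructing a $\K_{c(\phi),H}$-fixed vector from its Whittaker model, combined with the packet sum being $1$ at $m=c(\phi)$, would indeed show that non-generic $\pi'\in\Pi_\phi$ satisfy $(\pi')^{\K_{c(\phi),H}}=0$. But part (1) asserts vanishing for \emph{all} $m\ge 0$, and for $m>c(\phi)$ the packet sum is no longer $1$ (the paper only shows it is nonzero, using that $\dim\pi_\phi^{\K_{c(\phi)+1}}=2n+1$ is odd). So your argument does not exclude non-generic members acquiring fixed vectors at deeper level. The paper closes this gap by a completely different route: it shows directly (Theorem \ref{generic}) that if $\pi^{\K_{m,H}}\ne 0$ for any $m$, then $\pi$ is generic. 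The mechanism is a Gan--Savin-type lemma: a nonzero $\K_{m,H}$-fixed matrix coefficient restricts nontrivially to the embedded $\u_{2n}$ (for which $\K_{m,H}\cap H'$ is hyperspecial), producing an unramified tempered $\pi'$ on $\u_{2n}$ with $\Hom_{H'}(\pi\otimes\pi',\C)\ne 0$; the local GGP conjecture (Beuzart-Plessis) then pins down $\pi$ inside its packet, and the shape of the unramified $\phi'$ forces the GGP recipe to select the generic member. This is the missing idea in your proposal.
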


Theorem \ref{main1} (1) is an application of the \emph{local Gan--Gross--Prasad (GGP) conjecture} 
(\cite[Conjecture 17.3]{GGP}) established by Beuzart-Plessis \cite{BP1,BP2,BP3}. 
More precisely, 
if an irreducible tempered representation $\pi$ of $H$ 
satisfies $\pi^{\K_{m,H}} \not= 0$ for some $m \geq 0$, 
then the argument of a lemma of Gan--Savin (\cite[Lemma 12.5]{GS}) 
implies that 
there exists an irreducible unramified tempered representation $\pi'$ of $H' \cong \u_{2n}$, 
which we regard as a subgroup of $H$, 
such that $\Hom_{H'}(\pi \otimes \pi', \C) \not= 0$. 
In this situation, the local GGP conjecture tells us that $\pi$ is generic. 
For a detail, see Theorem \ref{generic} below.
\vskip 10pt

In all of the previous works listed above, 
the proofs of analogues of Theorem \ref{main1} (2) used \emph{Rankin--Selberg integrals}.
However, we do not consider them in this paper.
Instead of Rankin--Selberg integrals, 
we prove an analogue of the \emph{fundamental lemma} for our compact groups.
\vskip 10pt

The fundamental lemma is a core in the theory of \emph{endoscopy}.
Let $\tl{G} = G \rtimes \theta$ be a twisted space with an involution $\theta$ on a reductive group $G$, 
and let $H$ be an endoscopic group of $\tl{G}$. 
Suppose that both $G$ and $H$ are unramified. 
In this setting, the fundamental lemma asserts that 
the characteristic function $\1_{\K_H}$ is a \emph{transfer} of $\1_{\tl{\K}}$, 
where $\K \subset G$ and $\K_H \subset H$ are hyperspecial maximal compact subgroups, 
and we set $\tl\K = \K \rtimes \theta \subset \tl{G}$. 
For the notion of transfer, see Definition \ref{match} below. 
The fundamental lemma was established by Ng{\^o} \cite{Ngo} 
after Waldspurger's consecutive works \cite{W2,W3}. 
For some special cases, e.g., 
in Laumon--Ng{\^o} \cite{LN} and Kottwitz \cite{K-BC}, 
the fundamental lemma was earlier proven. 
\vskip 10pt

The \emph{transfer conjecture}, which is a consequence of the fundamental lemma (\cite{W1, W3}), 
says that for any $f \in C_c^\infty(\tl{G})$, 
there exists $f^H \in C_c^\infty(H)$ such that $f^H$ is a transfer of $f$. 
However, its proof is not constructive. 
It is of independent interest to find explicit pairs $(f,f^H)$ such that $f^H$ is a transfer of $f$. 
Beyond the characteristic functions of hyperspecial maximal compact subgroups, 
a few cases are known. 
For example: 

\begin{itemize}
\item
Clozel \cite{Cl} and Labesse \cite{L} (\resp Haines \cite{Haines}) 
established the base change fundamental lemma for spherical Hecke algebras
(\resp for the central elements in parahoric Hecke algebras). 

\item
Hales \cite{Hales} and Lemaire--M{\oe}glin--Waldspurger \cite{LMW}
extended the (usual) fundamental lemma to all elements in the spherical Hecke algebras.

\item
Kazhdan--Varshavsky \cite[Theorem 2.2.6]{KV} proved a formula for the transfer of Deligne--Lusztig functions 
when $\theta = \id$.
It is an answer to a conjecture of Kottwitz. 
Also, it implies that the fundamental lemma for Iwahori subgroups (\cite[Corollary 2.3.13]{KV}). 

\item
The fundamental lemma for principal congruence subgroups 
was shown by Ferrari \cite[Th{\'e}or{\`e}me 3.2.3]{F} when $\theta = \id$
(\resp by Ganapathy--Varma \cite[Lemma 8.2.2]{GV} when $G = \GL_N$ and $\theta \not= \id$).

\item
The second author also showed the fundamental lemma for 
the Moy--Prasad filtrations of unitary groups (\cite[Theorem 1.4]{Oi}).
\end{itemize}
We remark that in these papers, 
some assumption on the residue characteristic $p$ might be imposed.
\vskip 10pt

Now we state the second main result. 
Let $\K_0 = \GL_{2n+1}(\oo_E) \subset G = \GL_{2n+1}(E)$. 
For a positive integer $m > 0$, 
we define $\K_{m}$ by the compact subgroup of $G$ consisting of matrices $k$
of the form
\[
\bordermatrix{
&n&1&n \cr
n&\oo_E&\oo_E&\pp_E^{-m}\cr
1&\pp_E^m&1+\pp_E^m&\oo_E\cr
n&\pp_E^m&\pp_E^m&\oo_E
}
\]
with $\det(k) \in \oo_E^\times$.
Hence $\K_{m,H} = \K_m \cap H$.
Set $\tl\K_m = \K_m \rtimes \theta \subset \tl{G} = G \rtimes \theta$, 
where $\theta$ is an involution on $G$ such that $H$ is the centralizer of $\theta$ in $G$.

\begin{thm}[Theorem \ref{transfer}]\label{main2}
For $m > 0$, the function $\vol(\K_{m,H}; dh)^{-1}\1_{\K_{m,H}} \in C_c^\infty(H)$ 
is a transfer of $\vol(\K_{m}; dg)^{-1}\1_{\tl{\K}_m} \in C_c^\infty(\tl{G})$.
\end{thm}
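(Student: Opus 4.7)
The plan is to reduce Theorem \ref{main2} to an explicit matching of twisted and stable orbital integrals on strongly regular semisimple elements, and to verify that matching by a direct geometric analysis of the compact subgroups $\K_m$ and $\K_{m,H}$. Unwinding Definition \ref{match}, what must be shown is that, for every strongly regular semisimple $h \in H$,
\[
SO_h\bigl(\vol(\K_{m,H})^{-1}\1_{\K_{m,H}}\bigr) = \sum_{[\delta]} \Delta(h,\delta)\, TO_\delta\bigl(\vol(\K_m)^{-1}\1_{\tl\K_m}\bigr),
\]
where the sum runs over $\theta$-twisted $G$-conjugacy classes in $\tl G$ whose norm is stably conjugate to $h$. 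Because $E/F$ is unramified and the triple $(G,\theta,H)$ is defined over $\oo_F$ in the standard way, I would first check that the Kottwitz--Shelstad transfer factor $\Delta(h,\delta)$ equals $1$ for every pair contributing to the right-hand side, reducing the problem to a weighted identity of compact coset counts.

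Next, using a Kottwitz-type descent at the compact level, I would establish a bijection, induced by the norm map, between the set of $\theta$-twisted $\K_m$-conjugacy classes of strongly $\theta$-regular elements of $\tl\K_m$ and the set of stable $\K_{m,H}$-conjugacy classes of strongly regular elements of $\K_{m,H}$. The key input is Lang's theorem applied to the reductive quotient of $\K_m$, which is a connected reductive group over the residue field of $F$ equipped with the involution induced by $\theta$, and whose fixed subgroup is the reductive quotient of $\K_{m,H}$. This philosophy is parallel to that of Kazhdan--Varshavsky \cite{KV} and the second author's earlier work \cite{Oi}, and should translate the orbital-integral identity into a combinatorial identity between counts of cosets in the explicit block matrices defining $\K_m$. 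Choosing compatible Haar measures on the twisted centralizer $G_{\delta\theta}$ and on the ordinary centralizer $H_h$, which are identified as algebraic groups over $F$ via the norm, both orbital integrals then reduce to counts of cosets in a common homogeneous space, and the bijection above yields the desired equality after normalising volumes.

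The main obstacle, in my view, is the rigidity step: one must show that inside each twisted $G$-orbit meeting $\tl\K_m$, the intersection with $\tl\K_m$ is exactly one twisted $\K_m$-orbit, and that this orbit maps under the norm onto a single $\K_{m,H}$-orbit in $\K_{m,H}$. This uniqueness should follow from the block decomposition of $\K_m$ with respect to $\theta$, together with Lang's theorem on the residue field; but a careful induction on $m$ using the chain $\K_{m+1}\subset\K_m$ may be required to handle the deeper levels uniformly.
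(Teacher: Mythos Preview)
Your outline has the right ambient shape (transfer factors equal $1$, a bijection of compact conjugacy classes, Lang's theorem), but it skips the step that actually makes the argument work in the paper: the \emph{topological Jordan decomposition}. The paper does not match strongly regular elements in $\tl\K_m$ and $\K_{m,H}$ directly. Instead, Lemma~\ref{descent} rewrites each side as a sum over $\K_m$- (resp.\ $\K_{m,H}$-) conjugacy classes of \emph{topologically semisimple} elements $\tl s$ (resp.\ $t$), with the remaining integral being a stable orbital integral of the characteristic function of $\K_m\cap G_{\tl s}$ (resp.\ $\K_{m,H}\cap H_t$) at a topologically unipotent element. The key geometric input (Proposition~\ref{eigen=1}) is that any topologically semisimple $\tl s\in\tl\K_m$ can be $\K_m$-conjugated into the block $\iota(\GL_{2n}(\oo_E))\rtimes\theta$; this is what reduces the problem to a hyperspecial compact in $\GL_{2n}$, where Kottwitz's Proposition~7.1 and its twisted analogue (Proposition~\ref{71}) give the uniqueness you call ``rigidity''. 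The topologically unipotent parts are then matched trivially via the homeomorphism $u\mapsto u^2$. Your proposal to apply Lang's theorem directly to the ``reductive quotient of $\K_m$'' does not substitute for this: Lang only controls the special fiber, and without TJD you have no mechanism to propagate a mod-$\pp$ statement to the full pro-$p$ kernel of $\K_m$.

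Two smaller points. First, the containment $\K_{m+1}\subset\K_m$ you propose for induction is false: the upper-right $n\times n$ block of $\K_m$ is $\pp_E^{-m}$, which strictly enlarges as $m$ grows, while the lower-left block $\pp_E^{m}$ shrinks, so neither group contains the other. Second, your sentence ``both orbital integrals then reduce to counts of cosets in a common homogeneous space'' is exactly the assertion to be proved; the content lies in producing $\tl s$ with $G_{\tl s}=H_t$ as subgroups of $G$ (not merely isomorphic tori), which in the paper requires a Hilbert~90 argument (Lemma~\ref{lemA}) to write $t=s\theta(s)$ with $s\in\oo_E[t]\cap\K_m$.
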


By this theorem, 
one can transfer a result in \cite{JPSS} on the local newforms for $G = \GL_{2n+1}(E)$ 
to the one for $H = \u_{2n+1}$ via the local Langlands correspondence 
established by Mok \cite{Mok}. 
More precisely, one can prove the multiplicity one result in tempered $L$-packets (Theorem \ref{packet=1}).
Combining it with Theorem \ref{main1} (1), 
we obtain Theorem \ref{main1} (2).
\vskip 10pt

An advantage for using Theorem \ref{main2}
is that we do not need huge computations of elements in Hecke algebras 
as in \cite[Section 6]{RS1} and \cite[Chapter 8]{Ts}.
In fact, after reducing Theorem \ref{main2} 
to comparing topologically semisimple conjugacy classes (Lemma \ref{descent}), 
we only need a result of Kottwitz \cite[Proposition 7.1]{K-stable} and its twisted analogue (Proposition \ref{71})
together with easy linear algebraic calculations (Proposition \ref{eigen=1}). 
On the other hand, 
the relation between Rankin--Selberg integrals and newforms is not clear in this paper. 
\vskip 10pt

This paper is organized as follows. 
In Section \ref{sec.endoscopy}, we review the theory of endoscopy. 
Especially, we state our second main result as Theorem \ref{transfer}, 
and reduce it to comparing certain conjugacy classes in Lemma \ref{descent}. 
This comparison is investigated in Section \ref{sec.comparison}. 
Finally, we prove Theorem \ref{main1} in Section \ref{sec.newforms}. 
In Appendix \ref{appA}, we generalize Hilbert's Theorem 90 by using the faithfully flat descent.
\vskip 10pt

\subsection*{Acknowledgement}
The first author was supported by JSPS KAKENHI Grant Number 19K14494.
The third author was supported by JSPS KAKENHI Grant Number 21H00969.

\subsection*{Notation}
Let $E/F$ be an unramified quadratic extension 
of non-archimedean local fields of characteristic $0$ and of residue characteristic $p > 2$.
The non-trivial element in $\Gal(E/F)$ is denoted by $x \mapsto \overline{x}$.
Set $\oo_E$ (\resp $\oo_F$) to be
the ring of integers of $E$ (\resp $F$), and 
$\pp_E$ (\resp $\pp_F$) to be its maximal ideal. 
Fix a uniformizer $\varpi$ of $F$, which is also a uniformizer of $E$. 
Write $q = |\oo_F/\pp_F|$ so that $q^2 = |\oo_E/\pp_E|$.
\par

The identity matrix of size $N$ is denoted by $I_N$.
Set
\[
J_N = 
\begin{pmatrix}
&&&1\\
&&-1&\\
&\iddots&&\\
(-1)^{N-1}&&&
\end{pmatrix}.
\]
When $N=2n$, we also put
\[
J'_{2n} = 
\begin{pmatrix}
0 & J_{n} \\ 
{}^tJ_n & 0
\end{pmatrix}
=
\left(
\begin{array}{ccc|ccc}
&&&&&1\\
&&&&\iddots&\\
&&&(-1)^{n-1}&&\\
\hline
&&(-1)^{n-1}&&&\\
&\iddots&&&&\\
1&&&&&
\end{array}
\right).
\]
\par

For an algebraic variety $\mathbf{X}$ over $F$, 
we denote the set of its $F$-valued points by $X = \mathbf{X}(F)$.
\par

\section{Matching of orbital integrals and descent to centralizers}\label{sec.endoscopy}
In this section, we review several notions concerning transfers, 
and we reduce our problem to a correspondence of certain semisimple conjugacy classes. 

\subsection{Groups}\label{sec.groups}
Let $\G = \Res_{E/F}(\GL_{N,E})$ be the Weil restriction of $\GL_{N}$ over $E$.
In particular, $G = \G(F) = \GL_N(E)$. 
Define an involution $\theta$ on $\G$ by 
\[
\theta(x) = J_N{}^t\overline{x}^{-1}J_N^{-1}. 
\]
Let $\tl{\G} = \G \rtimes \theta$ be the non-neutral component of the group $\G \rtimes \pair{\theta}$. 
Then $\tl{\G}$ is a twisted space over $\G$. 
Let $\H = \G_{\theta} = \{ h \in \G \;|\; \theta(h) = h\}$ be a unitary group. 
It is an endoscopic group for $\tl{\G}$. 
\par

For $\gamma \in \H$, write $\H_\gamma$ for its centralizer. 
Similarly, for $\tl{\delta} = \delta \rtimes \theta \in \tl{\G}$, 
one can consider its conjugate 
\[
g \tl{\delta} g^{-1} = g(\delta \rtimes \theta) g^{-1} = (g \delta \theta(g)^{-1}) \rtimes \theta
\]
for $g \in \G$.
Set 
\[
\G_{\tl{\delta}} = \{g \in \G \;|\; g \tl{\delta} g^{-1} = \tl{\delta}\}
\]
to be the centralizer of $\tl{\delta}$.
We sometimes write $N(\delta) = \delta \theta(\delta) = \tl\delta^2$.
Note that $\G_{\tl\delta} \subset \G_{N(\delta)}$.
\par

Assume until the end of this subsection that $N = 2n+1$ is odd. 
Let $\G' = \Res_{E/F}(\GL_{2n,E})$ with an inclusion 
\[
\iota \colon \G' \hookrightarrow \G,\;
\begin{pmatrix}
A & B \\ C & D
\end{pmatrix}
\mapsto 
\begin{pmatrix}
A &0& B\\
0&1&0 \\
C&0&D
\end{pmatrix}
\]
for matrices $A,B,C,D$ of size $n \times n$.
Note that if we set $\H' = \iota^{-1}(\H)$, then $\H' = \G'_{\theta'}$
with
\[
\theta'(x) = J'_{2n}{}^t\overline{x}^{-1}{J'}_{2n}^{-1}. 
\]
If we fix $\epsilon \in \oo_E^\times$ with $\overline{\epsilon} = -\epsilon$, 
and if we set 
$\kappa = \diag(\underbrace{\epsilon, \dots, \epsilon}_n, \underbrace{1, \dots, 1}_n) \in \G'$, 
then 
\[
\epsilon J'_{2n} = \kappa J_{2n} {}^t\overline{\kappa}
\]
so that $\theta'(x) = \kappa \cdot \theta(\kappa^{-1}x\kappa) \cdot \kappa^{-1}$.
In particular, we have $\G'_{\theta'} = \kappa \G'_{\theta} \kappa^{-1}$. 
\par

Set $\K_0 = \GL_{2n+1}(\oo_E)$, 
which is a hyperspecial maximal compact subgroup of $G$.
For a fixed positive integer $m > 0$, 
we define $\K_{m}$ by the compact open subgroup of $G$ consisting of matrices $k$
of the form
\[
\bordermatrix{
&n&1&n \cr
n&\oo_E&\oo_E&\pp_E^{-m}\cr
1&\pp_E^m&1+\pp_E^m&\oo_E\cr
n&\pp_E^m&\pp_E^m&\oo_E
}
\]
with $\det(k) \in \oo_E^\times$.
Note that $\theta(\K_m) = \K_m$.
Moreover, $\iota^{-1}(\K_m)$ (\resp $\iota^{-1}(\K_m) \cap G'_{\theta'}$) 
is a hyperspecial maximal compact subgroup of $G'$ (\resp $G'_{\theta'}$).
Finally, we define $\K_{m,H} = \K_m \cap H$ for $m \geq 0$, which is a compact open subgroup of $H$. 

\subsection{Norm correspondence}
For a subgroup $J \subset \G(\overline{F})$ (\resp $J_H \subset \H(\overline{F})$), 
we say that elements $\tl{\delta},\tl{\delta}' \in \tl\G(\overline{F})$ 
(\resp $\gamma, \gamma' \in \H(\overline{F})$)
are \emph{$J$-conjugate} (\resp \emph{$J_H$-conjugate}) 
if there exists $g \in J$ (\resp $h \in J_H$) 
such that $\tl\delta' = g \tl\delta g^{-1}$ (\resp $\gamma' = h \gamma h^{-1}$). 

\begin{rem}
For a general connected reductive group $\H$ over $F$, 
the $\H(\overline{F})$-conjugacy 
might be different from the so-called \emph{stably conjugacy}, 
which is a bit more complicated. 
Although they are the same in our setting, 
we do not use the terminology of the stably conjugacy in this paper. 
\end{rem}

Let $\Cl_{\ss}(\tl\G)$ (\resp $\Cl_{\ss}(\H)$) be the set of semisimple $\G(\overline{F})$-conjugacy classes 
in $\tl{\G}(\overline{F})$ (\resp $\H(\overline{F})$-conjugacy classes in $\H(\overline{F})$).
Here, see \cite[Section 3.2]{KS} for the definition of the semisimplicity for elements of $\tl{\G}$.
As in \cite[Theorem 3.3.A]{KS}, 
one can define a map
\[
\AA \colon \Cl_\ss(\H) \rightarrow \Cl_{\ss}(\tl\G)
\]
as follows. 
Let $[\gamma] \in \Cl_\ss(\H)$ and $[\tl\delta] \in \Cl_{\ss}(\tl\G)$. 
As representatives, 
we may take diagonal elements $\gamma = \diag(\zeta_1,\dots,\zeta_N)$
and $\tl\delta = \diag(x_1, \dots, x_N) \rtimes \theta$ 
with $\zeta_1,\dots,\zeta_N, x_1,\dots,x_N \in E \otimes_F \overline{F}$.
Then $\AA([\gamma]) = [\delta]$ if and only if 
\[
\{\zeta_1,\dots,\zeta_N\} = \left\{ x_1/\overline{x_N}, \dots, x_N/\overline{x_1} \right\}
\]
as multi-sets.
Note that $\AA$ is bijective in our setting (see \cite[Section 4.2]{Oi1}).
For semisimple elements $\gamma \in \H$ and $\tl\delta \in \tl\G$, 
we call $\gamma$ is a \emph{norm} of $\tl\delta$ if $\AA([\gamma]) = [\tl\delta]$.
In other words, $\gamma$ is a norm of $\tl\delta = \delta \rtimes \theta$ if and only if 
$\gamma$ is $\G(\overline{F})$-conjugate to $N(\delta)$. 
\par

\begin{defi}
\begin{enumerate}
\item
An element $\tl{\delta} = \delta \rtimes \theta \in \tl{\G}$ is
\emph{strongly regular semisimple}
if its centralizer $\G_{\tl\delta}$ is abelian.
Write $\tl{\G}^{\srs}$ for the set of strongly regular semisimple elements in $\tl{\G}$. 

\item
An element $\gamma \in \H$ is
\emph{strongly regular semisimple}
if its centralizer $\H_\gamma$ is a maximal torus of $\H$.
Write $\H^{\srs}$ for the set of strongly regular semisimple elements in $\H$. 

\item
An element $\gamma \in \H^\srs$ is 
\emph{strongly $\G$-regular semisimple}
if there is $\tl\delta \in \tl{\G}^\srs$ such that $\gamma$ is a norm of $\tl\delta$.
\end{enumerate}
\end{defi}

The rationality of semisimple conjugacy classes via 
the norm correspondence is a delicate problem in general. 
However, in our setting, we can prove the following.

\begin{prop}\label{90}
\begin{enumerate}
\item
For every semisimple element $\tl\delta \in \tl{G}$, 
there exists a semisimple element $\gamma \in H$ 
such that $\gamma$ is a norm of $\tl\delta$. 
Moreover, $\G_{\tl\delta}(\overline{F}) \cong \H_\gamma(\overline{F})$.

\item
For every semisimple element $\gamma \in H$, 
there exists a semisimple element $\tl\delta \in \tl{G}$ such that 
\begin{itemize}
\item
$\gamma$ is a norm of $\tl\delta$; and 
\item
$\G_{\tl\delta} = \H_\gamma$ as algebraic subgroups of $\G$. 
\end{itemize}
In particular, every $\gamma \in H^\srs$ is strongly $\G$-regular semisimple.
\end{enumerate}
\end{prop}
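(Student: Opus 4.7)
The plan is to reduce both parts to the generalized Hilbert 90 established in Appendix \ref{appA}. For part (2), given semisimple $\gamma \in H$, the idea is to find $\delta$ in the $\theta$-stable $F$-torus $\T := Z(\G_\gamma)^{\circ}$ (the identity component of the center of the centralizer of $\gamma$) satisfying the \emph{exact} equality $N(\delta) = \gamma$ in $G$. Inside the abelian group $\T$, the twisted norm $N \colon \T \to \T$, $\delta \mapsto \delta\theta(\delta)$, lands in $\T^\theta$ and is surjective on $\overline{F}$-points onto the identity component (on cocharacters it is the map $1+\theta$), so the obstruction to solving $N(\delta) = \gamma$ for $\delta \in \T(F)$ is a class in $H^1(F, \ker N)$; this is exactly the cohomology annihilated by the generalized Hilbert 90, since $\ker N$ is built from tori induced from the unramified extension $E/F$. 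Once such $\delta$ is produced, the equality $\G_{\tl{\delta}} = \H_\gamma$ follows by a short calculation: any $g \in \G_{\tl{\delta}}$ automatically satisfies $g \tl{\delta}^{2} g^{-1} = \tl{\delta}^{2}$, i.e.\ $g \in \G_{N(\delta)} = \G_\gamma$, and since $\delta$ is central in $\G_\gamma$, the defining relation $\delta^{-1} g \delta = \theta(g)$ collapses to $g = \theta(g)$, placing $g$ in $\H_\gamma$; the reverse inclusion is similar. The ``strongly $\G$-regular'' corollary is then immediate, since for $\gamma \in H^{\srs}$ the group $\H_\gamma$ is a maximal torus, forcing $\G_{\tl{\delta}} = \H_\gamma$ to be abelian and hence $\tl{\delta} \in \tl{\G}^{\srs}$.

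For part (1), the argument is dual. Given semisimple $\tl{\delta} \in \tl{G}$, the element $N(\delta) \in G$ is semisimple, and the identity $\delta^{-1} N(\delta) \delta = \theta(N(\delta))$ shows that the $\G$-conjugacy class of $N(\delta)$ is $\theta$-stable. By the bijectivity of $\AA$ over $\overline{F}$, this class meets $\H(\overline{F})$, so the $F$-scheme $Y := \{g \in \G \;|\; g N(\delta) g^{-1} \in \H\}$ is nonempty over $\overline{F}$. Being a torsor under $\H$ on the left and under $\G_{N(\delta)}$ on the right, the existence of an $F$-point of $Y$ is once more a cohomological vanishing provided by Appendix \ref{appA}; any $g \in Y(F)$ then yields $\gamma := g N(\delta) g^{-1} \in H$, and the isomorphism $\G_{\tl{\delta}}(\overline{F}) \cong \H_\gamma(\overline{F})$ follows by transporting the twisted centralizer through this conjugation.

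The principal obstacle will be recognising that the cohomology groups involved---namely $H^1(F, \ker N)$ in part (2) and the torsor obstruction for $Y$ in part (1)---fall within the scope of the generalized Hilbert 90 of Appendix \ref{appA}; both $\ker N$ and $Y$ can be described in terms of Weil restrictions tied to the spectral decomposition of $\gamma$, but the uniform verification (in $\gamma$) is the technical heart of the argument and is where the faithfully flat descent enters. A secondary subtlety in part (2) is the judicious choice $\delta \in Z(\G_\gamma)^\circ$ rather than some more general $\theta$-stable torus containing $\gamma$: without the centrality of $\delta$ in $\G_\gamma$ one only obtains $\G_{\tl{\delta}} \subset \G_\gamma$, and cannot collapse the relation $\delta^{-1} g \delta = \theta(g)$ into $g = \theta(g)$ to reach the equality with $\H_\gamma$ required by the proposition.
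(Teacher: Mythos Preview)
Your approach to part (2) is viable but heavier than necessary. Taking $\delta$ in $Z(\G_\gamma)^\circ$ with $N(\delta)=\gamma$ does reduce to a Hilbert--90 statement, and one can indeed verify the hypotheses of Proposition~\ref{general90} for $B=E[\gamma]$ and $A=B^\sigma$ (with $\sigma(x)=J_N{}^t\overline{x}J_N^{-1}$): since $\gamma$ is semisimple, $B$ is a product of fields, $B\cong A\otimes_F E$ as $A$-algebras, and $A$ is itself a product of fields so $\Pic(A)=0$. The paper bypasses all of this with an elementary trick: for generic $\alpha\in E^\times$ the matrix $\delta=\alpha I_N+\overline{\alpha}\gamma$ is invertible, visibly lies in $E[\gamma]$, and satisfies $\gamma\theta(\delta)^{-1}=\delta$, hence $N(\delta)=\gamma$. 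Your centrality argument for $\G_{\tl\delta}=\H_\gamma$ then goes through identically. So part (2) is fine, just over-engineered.

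Part (1), however, has a real gap. The variety $Y=\{g\in\G:gN(\delta)g^{-1}\in\H\}$ is not a torsor: the left $\H$-action and right $\G_{N(\delta)}$-action commute, but the stabilizers are copies of $\H_{gN(\delta)g^{-1}}$, which are nontrivial. What you actually need is that the $\H(\overline{F})$-conjugacy class $C\subset\H$ meeting $N(\delta)$ has an $F$-point (the fiber of $Y\to C$ over such a point is then a torsor under $\G_{N(\delta)}$, which is a product of restrictions of scalars of $\GL$, so its $H^1$ vanishes). But $C(F)\neq\emptyset$ is precisely the Steinberg--Kottwitz rationality statement for quasi-split groups with simply connected derived group, and it is \emph{not} in the scope of Proposition~\ref{general90}: that proposition solves $x=y/\sigma(y)$ in a commutative ring with cyclic action, which is tailored to producing $\delta$ from $\gamma$ (the content of part (2)), not to descending conjugacy classes in the non-abelian group $\H$. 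The paper invokes \cite[Theorem~4.1]{K-rational} at exactly this point, and you will need either that theorem or an ad hoc argument specific to $\u_N$ (e.g.\ building $\gamma$ explicitly from the characteristic polynomial of $N(\delta)$, which satisfies the self-duality $P(X)=\pm X^N\overline{P}(1/X)/\overline{P}(0)$). As written, your appeal to Appendix~\ref{appA} for part (1) does not go through.
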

\begin{proof}
We prove (1). 
Let $\tl\delta = \delta \rtimes \theta \in \tl{G}$ be a semisimple element. 
We regard $N(\delta) = \delta \theta(\delta)$ as an element in $\H(E)$. 
Since $\theta(N(\delta)) = \delta^{-1} N(\delta) \delta$, 
the $\H(\overline{F})$-conjugacy class of $N(\delta)$ in $\H(\overline{F})$ 
is $\Gal(\overline{F}/F)$-invariant. 
Since $\H$ is quasi-split and 
its derived group is simply connected, 
by \cite[Theorem 4.1]{K-rational}, 
there exist $\gamma \in H = \H(F)$ and $h \in \H(\overline{F}) = \GL_N(\overline{F})$ 
such that $\gamma = h N(\delta) h^{-1}$. 
Especially, $\gamma$ is semisimple and is a norm of $\tl\delta$. 
Since $E \otimes_F \overline{F} \cong \overline{F} \times \overline{F}$
with $\Gal(E/F)$ acting on the right hand side by switching the two factors, 
we have 
$\G(\overline{F}) \cong \GL_N(\overline{F}) \times \GL_N(\overline{F})$
with $\theta$ acting on the right hand side by 
$\theta(g_1,g_2) = (J_N{}^tg_2^{-1}J_N^{-1}, J_N{}^tg_1^{-1}J_N^{-1})$. 
Hence
\begin{align*}
\G_{\tl\delta}(\overline{F}) &\cong
\left\{ 
(g_1,g_2) \in \GL_N(\overline{F}) \times \GL_N(\overline{F}) \;\middle|\;
\delta J_N{}^tg_2^{-1}J_N^{-1} = g_1 \delta,\; 
\overline{\delta}J_N{}^tg_1^{-1}J_N^{-1} = g_2 \overline{\delta}
\right\}
\\&\cong 
\left\{ 
(g_1,g_2) \in \GL_N(\overline{F}) \times \GL_N(\overline{F}) \;\middle|\;
g_2 = \overline{\delta}J_N{}^tg_1^{-1}J_N^{-1}\overline{\delta}^{-1},\; 
N(\delta)g_1N(\delta)^{-1} = g_1
\right\}
\\&\cong 
\left\{ g_1 \in \GL_{N}(\overline{F}) \;\middle|\; N(\delta)g_1N(\delta)^{-1} = g_1 \right\}
\\&\cong 
\left\{ h_1 \in \GL_{N}(\overline{F}) \;\middle|\; \gamma h_1 \gamma^{-1} = h_1 \right\}
= \H_\gamma(\overline{F}).
\end{align*}
\par

Next, we show (2). 
Let $\gamma \in H$ be a semisimple element.
Since $[E^\times : F^\times] = \infty$,
there exists $\alpha \in E^\times$ 
such that $\delta = \alpha I_N + \overline{\alpha}\gamma$ is an invertible matrix so that $\delta \in G$.
Since $\gamma$ is semisimple in $\H$, 
we see that $\tl\delta = \delta \rtimes \theta$ is semisimple in $\tl{\G}$. 
We claim that $\tl\delta$ satisfies the desired conditions.
\par

Remark that the map $x \mapsto \theta(x)^{-1} = J_N{}^t\overline{x}J_N^{-1}$
can be extended to an anti-$E$-linear endomorphism of $\M_N(E)$.
In particular, $\theta(\delta)^{-1} = \overline{\alpha} I_N + \alpha \gamma^{-1}$
since $\theta(\gamma) = \gamma$. 
Hence $\gamma \theta(\delta)^{-1} = \delta$ so that $\gamma = \delta \theta(\delta)$. 
This means that $\gamma$ is a norm of $\tl\delta$. 
\par

Note that $\tl\delta^2 = \delta \theta(\delta) = \gamma$.
Therefore, both $\G_{\tl\delta}$ and $\H_\gamma$
are subgroups of the centralizer $\G_{\gamma}$ of $\gamma$ in $\G$.
Moreover, 
for $g \in \G_\gamma$, 
since $g \delta = \delta g$ by the construction of $\delta$, 
we have
\begin{align*}
g \in \G_{\tl\delta} 
&\iff 
g(\delta \rtimes \theta) = (\delta \rtimes \theta)g
\\&\iff
g\delta \rtimes \theta = \delta \theta(g) \rtimes \theta
\\&\iff 
g = \theta(g) 
\iff g \in \H_\gamma.
\end{align*}
Hence we conclude that $\G_{\tl\delta} = \H_{\gamma}$.
\end{proof}

\subsection{Matching of orbital integrals}
For $\tl{\delta} \in \tl{G}^\srs$ and $f \in C_c^\infty(\tl{G})$, 
we define the \emph{orbital integral} of $f$ at $\tl{\delta}$ by 
\[
O_{\tl{\delta}}(f) = \int_{G/G_{\tl\delta}} f(g \tl{\delta} g^{-1}) d\dot{g}.
\] 
Similarly, 
for $\gamma \in H^\srs$ and $f^H \in C_c^\infty(H)$, 
the \emph{orbital integral} of $f^H$ at $\gamma$ is defined by
\[
O_{\gamma}(f^H) = \int_{H/H_\gamma} f^H(h \gamma h^{-1}) d\dot{h}.
\]
Here, $d\dot{g}$ (\resp $d\dot{h}$) 
is a left $G$-invariant (\resp $H$-invariant) measure on $G/G_{\tl\delta}$ (\resp on $H/H_\gamma$)
induced by Haar measures on $G$ and $G_{\tl\delta}$ (\resp $H$ and $H_\gamma$).
Finally, we define the \emph{stable orbital integral} of $f^H$ at $\gamma$ by 
\[
SO_\gamma(f^H) = \sum_{\gamma' \sim_\st \gamma / \sim} O_{\gamma'}(f^H), 
\]
where 
the sum is over the set of $H$-conjugacy classes within the $\H(\overline{F})$-conjugacy class of $\gamma$.
\par

As explained in \cite[Section 4.1]{Oi1}, 
 $\H$ is an endoscopic group for $\tl\G$
(with respect to the standard base change $L$-embedding from ${}^L\H$ to ${}^L\G$).
Fix a $\theta$-stable $F$-splitting of $G$. 
Let $\Delta(\gamma, \tl\delta)$ be the Kottwitz--Shelstad transfer factor 
with respect to $(\tl{G}, H)$ (see \cite[Sections 4, 5]{KS}). 
By \cite[Proposition 4.3]{Oi1}, we know that $\Delta(\gamma, \tl\delta) = 1$
for $\gamma \in H^\srs$ and $\tl\delta \in \tl{G}^\srs$ such that $\gamma$ is a norm of $\tl\delta$.

\begin{defi}\label{match}
We say that $f \in C_c^\infty(\tl{G})$ and $f^H \in C_c^\infty(H)$
have \emph{matching orbital integrals} 
if, for every strongly $\G$-regular semisimple element $\gamma \in H^\srs$, 
the identity
\[
SO_\gamma(f^H) = \sum_{\tl{\delta} \leftrightarrow \gamma/\sim} \Delta(\gamma, \tl\delta) O_{\tl{\delta}}(f)
= \sum_{\tl{\delta} \leftrightarrow \gamma/\sim} O_{\tl{\delta}}(f)
\]
holds, 
where the sum is taken over the set of $G$-conjugacy classes of $\tl\delta \in \tl{G}^\srs$ 
such that $\gamma$ is a norm of $\tl\delta$.
Here, we choose measures as in the manner of \cite[Section 5.5]{KS}. 
In this situation, we say that $f^H$ is a \emph{transfer} of $f$.
\end{defi}

When $N$ is odd, 
we have defined compact open subgroups $\K_m \subset G$ and $\K_{m,H} \subset H$ 
in Section \ref{sec.groups}. 
In this and next sections, we will prove the following. 

\begin{thm}\label{transfer}
Suppose that $N$ is odd. 
For $m > 0$, 
the function $\vol(\K_{m,H}; dh)^{-1}\1_{\K_{m,H}} \in C_c^\infty(H)$ 
is a transfer of $\vol(\K_{m}; dg)^{-1}\1_{\tl{\K}_m} \in C_c^\infty(\tl{G})$.
\end{thm}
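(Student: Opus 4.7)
The goal is to verify the matching condition of Definition \ref{match}: for every strongly $\G$-regular semisimple $\gamma \in H^\srs$,
\[
\vol(\K_{m,H})^{-1}\, SO_\gamma(\1_{\K_{m,H}})
= \vol(\K_m)^{-1} \sum_{\tl\delta \leftrightarrow \gamma/\sim} O_{\tl\delta}(\1_{\tl\K_m}),
\]
the sum running over $G$-conjugacy classes of $\tl\delta \in \tl{G}^\srs$ having $\gamma$ as a norm. The Kottwitz--Shelstad transfer factor is identically $1$ on these pairs by \cite[Proposition 4.3]{Oi1}, so no signs intervene.

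The first step is to descend to centralizers of the topologically semisimple parts. Writing the topological Jordan decomposition $\gamma = \gamma_0 \gamma_+$ with $\gamma_0$ of finite order prime to $p$ and $\gamma_+$ topologically unipotent, and analogously $\tl\delta = \tl\delta_0 \tl\delta_+$ on the twisted side, the orbital integral of $\1_{\K_{m,H}}$ at $\gamma$ decomposes as a sum of orbital integrals on the centralizers $H_{\gamma_0'}$, where $\gamma_0'$ runs over $H$-conjugates of $\gamma_0$ lying in $\K_{m,H}$; likewise for $O_{\tl\delta}(\1_{\tl\K_m})$ via $G$-conjugates of $\tl\delta_0$ lying in $\tl\K_m$. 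The precise reduction, which will be formulated as Lemma \ref{descent}, then reduces Theorem \ref{transfer} to a comparison of topologically semisimple conjugacy classes in $\K_{m,H}$ and in $\tl\K_m$ matched under the norm map $\AA$ of Proposition \ref{90}.

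The next step is to describe these topologically semisimple conjugacy classes via reductive quotients. Kottwitz's \cite[Proposition 7.1]{K-stable} identifies topologically semisimple $H$-conjugacy classes inside the parahoric $\K_{m,H}$ with semisimple conjugacy classes in the reductive quotient of $\K_{m,H}$ modulo its pro-$p$ radical. I would then formulate and prove a twisted analogue, Proposition \ref{71}, identifying topologically semisimple $G$-conjugacy classes in $\tl\K_m$ with $\theta$-twisted semisimple conjugacy classes in the $\theta$-stable reductive quotient of $\K_m$. Combined with Proposition \ref{90}, this translates the matching identity into a purely finite-group statement about the two reductive quotients. The remaining linear-algebraic step, Proposition \ref{eigen=1}, would then make the bijection concrete: a $\theta$-twisted semisimple class in the quotient of $\K_m$ corresponds under $\AA$ to a semisimple class in the quotient of $\K_{m,H}$ exactly when a certain eigenvalue condition on the distinguished middle line holds. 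Once this cardinality-preserving bijection is in place, the volume factors cancel against the centralizer volumes and the transfer identity follows.

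The principal obstacle will be the twisted analogue, Proposition \ref{71}. Unlike $\K_{m,H}$, the subgroup $\K_m$ for $m > 0$ is not $\theta$-hyperspecial and $\theta$ permutes its off-diagonal blocks nontrivially, so one must isolate the correct $\theta$-stable reductive quotient, verify that the topological Jordan decomposition on $\tl\G$ descends to it in the twisted setting, and guarantee rationality of $F$-norms at the level of the finite group. The faithfully flat descent refinement of Hilbert's Theorem 90 established in Appendix \ref{appA}, together with the rationality already afforded by Proposition \ref{90}, should provide exactly this input, but ensuring that the whole package is compatible with the norm correspondence $\AA$ — rather than merely with geometric conjugacy — will require care.
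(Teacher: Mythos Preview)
Your outline has the right first move (descent via topological Jordan decomposition, as in Lemma \ref{descent}), but it misreads how the paper actually closes the argument, and this misreading hides a real gap. You describe Proposition \ref{71} as a statement about the reductive quotient of $\K_m$ itself, and you flag the non-hyperspeciality of $\K_m$ as the main obstacle. In the paper, however, Proposition \ref{71} is proved only for \emph{hyperspecial} compacts; the passage from $\K_m$ to a hyperspecial setting is the content of Proposition \ref{eigen=1}, which shows that any topologically semisimple $\tl{s}\in\tl\K_m$ (resp.\ $t\in\K_{m,H}$) can be $\K_m$-conjugated (resp.\ $\K_{m,H}$-conjugated) into the block $\iota(G')$ with $1$ on the middle line. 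Since $\iota^{-1}(\K_m)$ is hyperspecial in $G'=\GL_{2n}(E)$, Proposition \ref{71} then applies there and gives $|I_\gamma(\K_m)|\le 1$, $|J_\gamma(\K_{m,H})|\le 1$ (Corollary \ref{leq1}). Your sketch never isolates this reduction-to-$\GL_{2n}$ step, and your proposed version of Proposition \ref{71} for the non-hyperspecial $\K_m$ directly would be harder, not easier.

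The more serious gap is your endgame. You write that ``volume factors cancel against the centralizer volumes and the transfer identity follows,'' but after Lemma \ref{descent} the two sides are stable orbital integrals on \emph{different} groups $G_{\tl{s}}$ and $H_t$, evaluated at different elements $u$ and $v$. The paper's Proposition \ref{bij} does more than count: it arranges representatives so that $G_{\tl{s}}=H_t$ as subgroups of $G$, $\K_m\cap G_{\tl{s}}=\K_{m,H}\cap H_t$, and moreover $v=u^2$. Only then is one comparing $SO_{u^2}$ with $SO_u$ for the \emph{same} test function on the \emph{same} group, and the equality is the (nontrivial but standard) fact that squaring is a homeomorphism on topologically unipotent elements. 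Without producing $\tl{s}$ with $G_{\tl{s}}=H_t$ (this is exactly where Lemma \ref{lemA} and the descent argument of Appendix \ref{appA} enter, via $s\in\oo_E[t]$), a mere bijection of conjugacy classes does not let you conclude.
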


\subsection{Descent}\label{sec.descent}
To compare orbital integrals, we introduce the following notions. 

\begin{defi}
\begin{enumerate}
\item
An element $u \in G$ (\resp $v \in H$) is called \emph{topologically unipotent}
if $\lim_{n \to \infty} u^{p^n} = 1$ (\resp $\lim_{n \to \infty} v^{p^n} = 1$).

\item
An element $s \rtimes \theta \in \tl{G}$ (\resp $t \in H$) is called \emph{topologically semisimple}
if $s \rtimes \theta$ (\resp $t$) is of finite order prime to $p$. 
Note that $s \rtimes \theta \in \tl{G}$ is topologically semisimple 
if and only if $N(s) = s\theta(s) \in G$ is topologically semisimple, i.e., 
$N(s)$ is of finite order prime to $p$.

\item
A \emph{topological Jordan decomposition (TJD)} 
of $\tl\delta = \delta \rtimes \theta \in \tl{G}$ (\resp $\gamma \in H$)
is a decomposition 
$\tl{\delta} = \tl{s}u = u \tl{s}$ (\resp $\gamma = tv = vt$)
with topologically semisimple $\tl{s} = s \rtimes \theta \in \tl{G}$ (\resp $t \in H$) 
and topologically unipotent $u \in G$ (\resp $v \in H$). 
In this situation, we write $\tl{s} = \tl\delta_s$ and $u = \tl\delta_u$ 
(\resp $t = \gamma_s$ and $v = \gamma_u$).
\end{enumerate}
\end{defi}

By \cite{S}, the TJD of $\tl\delta$ uniquely exists 
if $\tl{\delta}$ is in a compact subgroup of $G \rtimes \pair{\theta}$, 
or equivalently, $N(\delta) = \tl\delta^2$ is in a compact subgroup of $G$.
This is similar for the TJD of $\gamma$. 
Moreover, in this case, $\tl{\delta}_s$ and $\tl\delta_u$ (\resp $\gamma_s$ and $\gamma_u$) belong to
the closure of the subgroup generated by $\tl{\delta}$ (\resp $\gamma$).
\par

The following is the first step of the proof of Theorem \ref{transfer}. 
\begin{lem}\label{descent}
Fix $\gamma \in H^\srs$.
Suppose that $\gamma$ has a TJD $\gamma = \gamma_s \gamma_u$.
\begin{enumerate}
\item
For a compact open subgroup $K \subset G$ satisfying $\theta(K) = K$, 
we have 
\[
\sum_{\tl\delta \leftrightarrow \gamma/\sim}O_{\tl{\delta}}(\vol(K; dg)^{-1}\1_{\tl{K}}) 
= \sum_{\tl{s} \in I_\gamma(K)} SO_{u}(\vol(K \cap G_{\tl{s}}; dg_{\tl{s}})^{-1}\1_{K \cap G_{\tl{s}}}),
\]
where
$I_\gamma(K)$ is a set of representatives $\tl{s}$ of the $K$-conjugacy classes
satisfying the following conditions: 
\begin{itemize}
\item
$\tl{s} \in \tl{K}$; 
\item
$\tl{s} = s \rtimes \theta$ is topologically semisimple; 
\item
$\gamma_s$ is a norm of $\tl{s}$; 
\item
there exists $u \in K \cap G_{\tl{s}}$ such that $\gamma$ is a norm of $\tilde{s}u$. 
\end{itemize}

\item
For a compact open subgroup $K_H \subset H$, 
we have
\[
SO_\gamma(\vol(K_{H}; dh)^{-1}\1_{K_H}) 
= \sum_{t \in J_\gamma(K_H)} SO_{v}(\vol(K_{H} \cap H_t; dh_t)^{-1}\1_{K_H \cap H_{t}}), 
\]
where
$J_\gamma(K_H)$ is a set of representatives $t$ of 
the $K_H$-conjugacy classes satisfying the following conditions: 
\begin{itemize}
\item
$t \in K_H$; 
\item
$t$ is topologically semisimple; 
\item
$t$ is $\H(\overline{F})$-conjugate to $\gamma_s$; 
\item
there exists $v \in K_H \cap H_t$ such that 
$tv$ is $\H(\overline{F})$-conjugate to $\gamma$. 
\end{itemize}

\end{enumerate}
Here, the right hand sides are sums of stable orbital integrals for $G_{\tl{s}}$ and $H_{t}$, 
which are independent of the choices of $u \in G_{\tl{s}}$ and $v \in H_t$.
\end{lem}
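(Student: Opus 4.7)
The plan is to apply the standard descent of (twisted) orbital integrals along the topological Jordan decomposition (TJD), in the spirit of Kottwitz and Waldspurger. I will detail part~(1); part~(2) is strictly analogous, and in fact simpler, with $(\H, \id)$ in place of $(\tl\G, \theta)$.

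First I observe that only those $G$-conjugacy classes $[\tl\delta]$ meeting $\tl\K$ can contribute to the sum on the left-hand side of (1), so after replacing $\tl\delta$ by a $G$-conjugate I may assume $\tl\delta \in \tl\K$. Then $N(\delta) = \tl\delta^2 \in \K$ is in a compact subgroup of $G$, and \cite{S} furnishes a unique TJD $\tl\delta = \tl{s}u$ with $\tl{s} \in \tl\K$ topologically semisimple, $u \in \K$ topologically unipotent, and $u \in G_{\tl{s}}$. The crucial compatibility I need to check is that TJD is compatible with the norm correspondence: if $\gamma$ is a norm of $\tl\delta = \tl{s}u$, then $\gamma_s$ is a norm of $\tl{s}$. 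I would verify this by comparing the TJDs of $\gamma$ and of $N(\delta)$ (which are $\H(\overline F)$-conjugate by Proposition~\ref{90}) and invoking the uniqueness of TJD in a compact subgroup. Hence, up to a $\K$-conjugation, $\tl{s}$ lies in the indexing set $I_\gamma(\K)$.

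Next, using the nested centralizers $G_{\tl\delta} \subset G_{\tl{s}} \subset G$ (the first inclusion because $\tl{s}$ lies in the closure of the subgroup generated by $\tl\delta$), I decompose the orbital integral by Fubini: the outer integral over $G/G_{\tl{s}}$ cancels the normalization $\vol(\K; dg)^{-1}$ against the measure of those $G/G_{\tl{s}}$-cosets carrying $\tl{s}$ into a fixed $\K$-conjugacy class in $\tl\K$, producing exactly the factor $\vol(\K \cap G_{\tl{s}}; dg_{\tl s})^{-1}$ needed for the inner integral, which then becomes $O_u(\1_{\K \cap G_{\tl{s}}})$ on $G_{\tl{s}}$. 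Partitioning the sum $\sum_{[\tl\delta]}$ according to the $\K$-conjugacy class of the TJD semisimple part produces the outer sum $\sum_{\tl{s}\in I_\gamma(\K)}$. Finally, summing over the various $G$-conjugacy classes $[\tl\delta]$ whose topologically semisimple part is $\K$-conjugate to a fixed $\tl{s}$ corresponds, via the centralizer isomorphism $G_{\tl{s}}(\overline F)\cong H_{\gamma_s}(\overline F)$ of Proposition~\ref{90}, to summing over $G_{\tl{s}}$-conjugacy classes of topologically unipotent $u\in \K\cap G_{\tl{s}}$ inside the single $G_{\tl{s}}(\overline F)$-orbit determined by $\gamma_u$. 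By definition, this sum of orbital integrals equals the stable orbital integral $SO_u(\1_{\K \cap G_{\tl{s}}})$, establishing part~(1). Part~(2) is obtained by running the identical argument directly inside $H$, with TJDs of elements of $K_H$ stably conjugate to $\gamma$ sorted by the $K_H$-conjugacy class of the topologically semisimple part.

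The main obstacle will be the bookkeeping of orbits in the last step --- matching $G$-conjugacy of $\tl\delta$ with $G_{\tl{s}}$-conjugacy of $u$ inside a prescribed $G_{\tl{s}}(\overline F)$-orbit, and checking that each such orbit is accounted for exactly once --- together with the precise compatibility of TJD with the twisted norm map, for which I would rely on \cite{S} and the centralizer identification of Proposition~\ref{90}. Measure normalizations, though somewhat delicate, all flow from the standard quotient conventions of \cite[Section 5.5]{KS} invoked in Definition~\ref{match}.
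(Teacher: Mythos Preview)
Your proposal is correct and follows essentially the same approach as the paper, which also cites \cite[Lemma~6.3.1]{KV} as the model. The only cosmetic differences are that the paper proves (2) first and then says (1) is analogous (you do the reverse), and that the paper immediately rewrites the orbital integral as a finite sum over $K$-conjugacy classes in $\tl{K}$ (respectively $K_H$-conjugacy classes in $K_H$) via the double coset decomposition $K\backslash G/G_{\tl\delta}$, rather than phrasing it as Fubini along the tower $G\supset G_{\tl s}\supset G_{\tl\delta}$; unwinding your Fubini for the characteristic function yields exactly the paper's discrete sum, and the remaining bookkeeping (matching $K$-classes of $\tl\delta'$ with $(K\cap G_{\tl s})$-classes of $u'$ inside a single $\G_{\tl s}(\overline F)$-orbit, via uniqueness of TJD under conjugation) is identical.
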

\begin{proof}
First, we prove (2). 
The proof is the same as the one of \cite[Lemma 6.3.1]{KV}. 
Since $\vol(K_{H}; dh)^{-1}\1_{K_H}$ is bi-$K_H$-invariant, 
we have
\begin{align*}
&SO_{\gamma}(\vol(K_{H}; dh)^{-1}\1_{K_H}) 
\\&= \sum_{\gamma' \sim_\st \gamma /\sim} 
\int_{H/H_{\gamma'}} \vol(K_{H}; dh)^{-1}\1_{K_H}(h \gamma' h^{-1}) d\dot{h}
\\&= \sum_{\gamma' \sim_\st \gamma /\sim}
\int_{K_H \bs H/H_{\gamma'}} 
\int_{K_H/ K_H \cap H_{h \gamma' h^{-1}}}
\vol(K_{H}; dh)^{-1}\1_{K_H}(kh \gamma' h^{-1}k^{-1}) d\dot{k}d\dot{h}
\\&= 
\sum_{\gamma''} \vol(K_H \cap H_{\gamma''}; dh_{\gamma''})^{-1} \1_{K_H}(\gamma''), 
\end{align*}
where $\gamma'' \in K_H$ runs over a set of representatives of 
the $K_H$-conjugacy classes which are $\H(\overline{F})$-conjugate to $\gamma$.
Here, in the last equation, we set $\gamma'' = h \gamma' h^{-1}$, 
and we note that 
\[
\vol(K_H/K_H \cap H_{\gamma''}; d\dot{k}) 
= 
\frac{\vol(K_H; dh)}{\vol(K_H \cap H_{\gamma''}; dh_{\gamma''})}. 
\]
By considering the TJD of $\gamma''$, 
the sum $\sum_{\gamma''}$ above can be replaced with 
$\sum_{t \in J_\gamma(K_H)}\sum_{\gamma''}$, where 
\begin{itemize}
\item
$\gamma'' \in K_H$ runs over a set of representatives of 
the $K_H$-conjugacy classes which are $\H(\overline{F})$-conjugate to $\gamma$
such that $\gamma''_s = t$. 
\end{itemize} 
For each $t \in J_\gamma(K_H)$, we fix $\gamma''$ as above. 
If we write $\gamma'' = tv$ for its TJD, 
we can rewrite 
\[
SO_{\gamma}(\vol(K_{H}; dh)^{-1}\1_{K_H}) 
= 
\sum_{t \in J_\gamma(K_H)}\sum_{v'}
\vol(K_H \cap H_{tv'}; dh_{tv'})^{-1} \1_{K_H}(tv'),
\]
where 
$v' \in K_H \cap H_t$ runs over 
a set of representatives of the $(K_H \cap H_t)$-conjugacy classes 
which are $\H_t(\overline{F})$-conjugate to $v$.
\par

By a similar computation as above, 
since $H_t \cap H_{v'} = H_{tv'}$, 
we have
\begin{align*}
&SO_{v}(\vol(K_{H} \cap H_t; dh_t)^{-1}\1_{K_H \cap H_t}) 
\\&= \sum_{v' \sim_\st v /\sim}
\int_{H_t/H_{tv'}} \vol(K_{H} \cap H_t; dh_t)^{-1}\1_{K_H \cap H_t}(hv'h^{-1})d\dot{h}
\\&= 
\sum_{v'} \vol(K_H \cap H_{tv'}; dh_{tv'})^{-1}\1_{K_H \cap H_t}(v'), 
\end{align*}
where in the last sum, 
$v' \in K_H \cap H_t$ runs over a set of representatives of 
the $(K_H \cap H_t)$-conjugacy classes which are $\H_t(\overline{F})$-conjugate to $v$.
Namely, the sum $\sum_{v'}$ is the same as in the expression of 
$SO_{\gamma}(\vol(K_{H}; dh)^{-1}\1_{K_H})$ above.
By noting that $\1_{K_H}(tv') = \1_{K_H \cap H_t}(v') = 1$, 
we conclude that 
\[
SO_{\gamma}(\vol(K_{H}; dh)^{-1}\1_{K_H}) 
= \sum_{t \in J_\gamma(K_H)}SO_{v}(\vol(K_{H} \cap H_t; dh_t)^{-1}\1_{K_H \cap H_t}). 
\]
\par

The proof of (1) is essentially the same as the one of (2).
Since $\theta(K) = K$, 
we see that $\vol(K; dg)^{-1}\1_{\tl{K}}$ is bi-$K$-invariant. 
Hence we have
\begin{align*}
\sum_{\tl\delta \leftrightarrow \gamma/\sim}
&O_{\tl{\delta}}(\vol(K; dg)^{-1}\1_{\tl{K}}) 
\\&= 
\sum_{\tl\delta \leftrightarrow \gamma/\sim}
\int_{G/G_{\tl\delta}} \vol(K; dg)^{-1}\1_{\tl{K}}(g \tl{\delta} g^{-1}) d\dot{g}
\\&=
\sum_{\tl\delta \leftrightarrow \gamma/\sim}
\int_{K \bs G/G_{\tl\delta}} 
\int_{K/ K \cap G_{g \tl{\delta} g^{-1}}}
\vol(K; dg)^{-1}\1_{\tl{K}}(kg \tl{\delta} g^{-1}k^{-1}) d\dot{k}d\dot{g}
\\&= 
\sum_{\tl{\delta}'} \vol(K \cap G_{\tl\delta'}; dg_{\tl\delta'})^{-1} \1_{\tl{K}}(\tl\delta'), 
\end{align*}
where $\tl\delta' \in \tl{K}$ runs over a set of representatives of the $K$-conjugacy classes 
such that $\gamma$ is a norm of $\tl{\delta}'$. 
By considering the TJD of $\tl\delta'$, 
the sum $\sum_{\tl\delta'}$ above can be replaced with 
$\sum_{\tl{s} \in I_\gamma(K)}\sum_{\tl\delta'}$, where 
\begin{itemize}
\item
$\tl\delta' \in \tl{K}$ runs over a set of representatives of the $K$-conjugacy classes 
such that $\gamma$ is a norm of $\tl{\delta}'$, 
and such that $\tl\delta'_s = \tl{s}$. 
\end{itemize} 
For each $\tl{s} \in I_\gamma(K)$, we fix $\tl{\delta}'$ as above. 
If we write $\tl{\delta}' = \tl{s}u$ for its TJD, 
we can rewrite 
\[
\sum_{\tl\delta \leftrightarrow \gamma/\sim}
O_{\tl{\delta}}(\vol(K; dg)^{-1}\1_{\tl{K}}) 
= 
\sum_{\tl{s} \in I_\gamma(K)}\sum_{u'}
\vol(K \cap G_{\tl{s}u'}; dg_{\tl{s}u'})^{-1} \1_{\tl{K}}(\tl{s}u'),
\]
where 
$u' \in K \cap G_{\tl{s}'}$ runs over 
a set of representatives of the $(K \cap G_{\tl{s}'})$-conjugacy classes 
which are $\G_{\tl{s}'}(\overline{F})$-conjugate to $u$.
\par

By the same computation as in the proof of (2), 
since $(G_{\tl{s}})_{u'} = G_{\tl{s}u'}$, 
we have
\begin{align*}
&SO_{u}(\vol(K \cap G_{\tl{s}}; dg_{\tl{s}})^{-1}\1_{K \cap G_{\tl{s}}}) 
\\&= \sum_{u' \sim_\st u /\sim}
\int_{G_{\tl{s}}/(G_{\tl{s}})_{u'}} \vol(K \cap G_{\tl{s}}; dg_{\tl{s}})^{-1}\1_{K \cap G_{\tl{s}}}(gu'g^{-1})d\dot{g}
\\&= 
\sum_{u'} \vol(K \cap G_{\tl{s}u'}; dg_{\tl{s}u'})^{-1}\1_{K \cap G_{\tl{s}}}(u'), 
\end{align*}
where $\sum_{u'}$ is the same as in the expression of 
$\sum_{\tl\delta \leftrightarrow \gamma/\sim}
O_{\tl{\delta}}(\vol(K; dg)^{-1}\1_{\tl{K}})$ above. 
By noting that $\1_{\tl{K}}(\tl{s}u') = \1_{K \cap G_{\tl{s}}}(u') = 1$, 
we conclude that 
\[
\sum_{\tl\delta \leftrightarrow \gamma/\sim}
O_{\tl{\delta}}(\vol(K; dg)^{-1}\1_{\tl{K}}) 
= 
\sum_{\tl{s} \in I_\gamma(K)}SO_{u}(\vol(K \cap G_{\tl{s}}; dg_{\tl{s}})^{-1}\1_{K \cap G_{\tl{s}}}). 
\]
This completes the proof.
\end{proof}

We will prove the following key proposition in the next section. 
\begin{prop}\label{bij}
Suppose that $N$ is odd. 
For $m > 0$, 
set $\tl\K_m = \K_m \rtimes \theta \subset \tl{G}$ and $\K_{m,H} = \K_m \cap H$. 
Then for $\gamma \in H^\srs$, 
there exists a bijection 
\[
J_{\gamma}(\K_{m,H}) \longleftrightarrow I_{\gamma}(\K_m),\;
t \leftrightarrow \tl{s}.
\]
Moreover, by replacing $I_\gamma(\K_m)$ if necessary, 
we can assume that
\begin{itemize}
\item
$t$ is a norm of $\tl{s}$; 
\item
$H_t = G_{\tl{s}}$; 
\item
if $tv$ is $\H(\overline{F})$-conjugate to $\gamma$, and if $\gamma$ is a norm of $\tl{s}u$, 
then one can take $v = u^2$.
\end{itemize}
\end{prop}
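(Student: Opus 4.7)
The plan is to realize the bijection by sending $t\in J_\gamma(\K_{m,H})$ to a twisted square root $\tl{s}=s\rtimes\theta\in I_\gamma(\K_m)$ satisfying $\tl{s}^2=s\theta(s)=N(s)=t$, and matching $v\in\K_{m,H}\cap H_t$ with its unique topologically unipotent square root $u=v^{1/2}$. The key observation is that ``$t$ is a norm of $\tl{s}$'' says precisely that $\tl{s}^2$ is $\G(\overline{F})$-conjugate to $t$, so the whole correspondence reduces to producing integral twisted square roots. With such $s$ in hand, the equality $H_t=G_{\tl{s}}$ follows from the argument used in the proof of Proposition~\ref{90}(2): since $s$ is a polynomial in $t$, the centralizer of $\tl{s}$ in $\G$ coincides with the $\theta$-fixed centralizer of $t$, i.e., with $\H_t$.

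For the forward construction, fix $t\in J_\gamma(\K_{m,H})$. The task is to solve $s\theta(s)=t$ with $s\in\K_m$, an integral form of Hilbert's Theorem~90. A concrete candidate is $s=\alpha I_N+\overline\alpha\,t$ with $\alpha\in\oo_E$ chosen so that $\alpha+\overline\alpha\equiv 1\pmod{\pp_F^{m}}$; such $\alpha$ exists because $\mathrm{Tr}_{E/F}\colon\oo_E\to\oo_F$ is surjective, and in fact $\alpha=\tfrac{1}{2}$ works when $p>2$, provided $-\alpha/\overline\alpha$ is not an eigenvalue of $t$, which can be arranged by a small perturbation of $\alpha$ within its trace class. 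A block-by-block check against the matrix shape of $\K_m$ shows $s\in\K_m$, and the computation in the proof of Proposition~\ref{90}(2) gives $s\theta(s)=t$. Setting $\tl{s}=s\rtimes\theta\in\tl\K_m$ yields $\tl{s}^2=t$, so $\tl{s}$ is topologically semisimple (using $p>2$), and $\gamma_s$ is a norm of $\tl{s}$ because $\gamma_s$ is $\H(\overline{F})$-conjugate to $t=N(s)$.

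To match the auxiliary data, take $u\in\K_{m,H}\cap H_t$ to be the unique topologically unipotent square root of $v$; this exists and is unique because $p>2$ makes the squaring map a bijection on the pro-$p$ part of $H_t$. Since $u\in H$ is fixed by $\theta$ and commutes with $t$, it commutes with both $s$ and $\theta(s)$, so
\[
N(su)=s\,u\,\theta(s)\,\theta(u)=s\theta(s)\,u^{2}=tv,
\]
which is $\H(\overline{F})$-conjugate to $\gamma$; hence $\gamma$ is a norm of $\tl{s}u$ and $v=u^{2}$. The inverse sends $\tl{s}$ to $t=\tl{s}^{2}=N(s)$ and $u$ to $v=u^{2}$, after possibly replacing $\tl{s}$ within its $\K_m$-conjugacy class so that $N(s)$ actually lies in $H$---this is the meaning of ``by replacing $I_\gamma(\K_m)$ if necessary''. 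The main obstacle is this last step: showing that every $\K_m$-conjugacy class in $I_\gamma(\K_m)$ contains such a representative and, equivalently, matching $\K_m$-conjugacy on the twisted side with $\K_{m,H}$-conjugacy on the $H$-side. Both reduce to the vanishing of an integral nonabelian $H^{1}$ of $\Gal(E/F)$ acting on a centralizer inside $\K_m$, which is supplied by the generalized Hilbert~$90$ proved in Appendix~\ref{appA}.
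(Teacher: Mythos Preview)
Your forward construction is essentially the paper's Lemma~\ref{A=>B}, but the ``small perturbation'' step hides a real obstruction: with $\alpha+\overline\alpha\equiv 1\pmod{\pp_F^m}$ fixed, the residues of $-\alpha/\overline\alpha$ range over only $q$ values in $\oo_E/\pp_E$, so when $q\le N=2n+1$ you may be unable to avoid the eigenvalues of $t$ and hence cannot force $\det(s)\in\oo_E^\times$. The paper makes exactly this point (Remark~\ref{A=>B_easy}) and repairs it with Lemma~\ref{lemA}, which applies the commutative Hilbert~90 of Appendix~\ref{appA} to the \emph{abelian} algebra $B=\oo_E[t]$ to produce $s\in\oo_E[t]^\times$ with $s\theta(s)=t$, followed by a normalization to land in $\K_m$. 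Once that is granted, your treatment of $u=v^{1/2}$ and the three bulleted compatibilities matches the paper.

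The serious gap is everything else. For the inverse direction you assert that each $\K_m$-class in $I_\gamma(\K_m)$ admits a representative with $N(s)\in H$, and that this together with the matching of conjugacy relations ``reduce to the vanishing of an integral nonabelian $H^1$\ldots supplied by~Appendix~\ref{appA}.'' But Proposition~\ref{general90} is a statement about cyclic Galois cohomology of a \emph{commutative} $\oo_F$-algebra; it says nothing about nonabelian $H^1$ with values in centralizers inside $\K_m$, and no such vanishing is available here. The paper does not argue this way at all. Its route is: (i)~Proposition~\ref{eigen=1}, a nontrivial Witt--type linear algebra argument showing any topologically semisimple $\tl{s}\in\tl\K_m$ (resp.\ $t\in\K_{m,H}$) is $\K_m$- (resp.\ $\K_{m,H}$-) conjugate to a block--diagonal element with middle entry $1$, thereby reducing to the \emph{hyperspecial} subgroup $\K_m'=\iota^{-1}(\K_m)\subset\GL_{2n}(E)$; (ii)~Proposition~\ref{71}, a twisted analogue of Kottwitz's \cite[Proposition~7.1]{K-stable} proved via Lang's theorem and smoothness of centralizer schemes, giving that $\G'(\overline F)$-conjugate topologically semisimple elements in a hyperspecial group are already integrally conjugate; (iii)~Lemma~\ref{lem_hyper}, again using Lang's theorem over the residue field, to produce an integral norm on the $H$-side. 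These three inputs yield both $|I_\gamma(\K_m)|,|J_\gamma(\K_{m,H})|\le 1$ (Corollary~\ref{leq1}) and the nonemptiness equivalence (Lemmas~\ref{A=>B}, \ref{B=>A}), from which the bijection and the three compatibilities follow. Your proposal contains none of (i)--(iii), and without the reduction to the hyperspecial setting there is no mechanism for controlling $\K_m$-conjugacy or for producing the inverse map.
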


Using this proposition, we can prove Theorem \ref{transfer}.
\begin{proof}[Proof of Theorem \ref{transfer}]
By Lemma \ref{descent} and Proposition \ref{bij}, 
it suffices to show that 
\[
SO_{u^2}(\1_{\K_{m,H} \cap H_t}) 
= SO_{u}(\1_{\K_{m,H} \cap H_t}) 
\]
for topologically unipotent elements $u \in H_t$
such that both $u$ and $u^2$ are strongly regular semisimple.
This follows from the fact that 
the map $u \mapsto u^2$ is a homeomorphism 
from the subset of topologically unipotent elements in $H_t$ to itself
(see \cite[Lemma 3.2.7]{GV}).
\end{proof}

\section{Norm correspondence for topologically semisimple elements}\label{sec.comparison}
The goal of this section is to prove Proposition \ref{bij}.

\subsection{Topologically semisimple elements in hyperspecial maximal compact subgroups}
As in Section \ref{sec.groups}, 
let $\G = \Res_{E/F}(\GL_{N,E})$ and $\H = \G_\theta$. 
We fix a hyperspecial maximal compact subgroup $\K$ of $G$ satisfying $\theta(\K) = \K$ 
such that $\K_H = \K \cap H$ is also a hyperspecial maximal compact subgroup of $H$.
In this subsection, 
we consider conjugacy classes of topologically semisimple elements in $\tl\K = \K \rtimes \theta$ or in $\K_H$. 
To do this, we take a smooth integral model $\GG$ of $\G$ over $\oo_F$ 
such that  $\GG(\oo_F) = \K$. 
Since $\theta(\K) = \K$, one can extend $\theta$ to 
an automorphism of $\GG$ over $\oo_F$. 
Moreover $\HH = \{h \in \GG \;|\; \theta(h) = h\}$ is a hyperspecial smooth integral model of $\H$ over $\oo_F$.
\par

\begin{lem}\label{lem_hyper}
Let $\tl{s} = s \rtimes \theta \in \tl\K$ be a topologically semisimple element. 
Then there exists $t \in \K_H$ such that $t$ is a norm of $\tl{s}$. 
\end{lem}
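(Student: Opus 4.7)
The plan is to pass to the residue field, construct a norm there using a finite-field analogue of Proposition \ref{90}(1), and lift back to $\oo_F$ using smoothness of the integral models together with the generalized Hilbert 90 from Appendix \ref{appA}.

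\textbf{Step 1 (Reduction modulo $\pp_F$).} Because $\tl{s}$ is topologically semisimple, $N(s)=s\theta(s)=\tl{s}^2\in\K=\GG(\oo_F)$ has finite order prime to $p$. In particular its reduction $\overline{N(s)}\in\GG(k_F)$ is semisimple of the same order, and $\bar{\tl{s}}=\bar{s}\rtimes\theta\in\tl{\GG}(k_F)$ satisfies $\bar{\tl{s}}^2=\overline{N(s)}$.

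\textbf{Step 2 (Norm over the residue field).} I would run the argument of Proposition \ref{90}(1) over $k_F$ in place of $F$. The $\HH_{k_F}(\overline{k_F})$-conjugacy class of $\overline{N(s)}$ is $\Gal(\overline{k_F}/k_F)$-stable, by the same computation $\theta(\overline{N(s)})=\bar{s}^{-1}\overline{N(s)}\bar{s}$. Since $\HH_{k_F}$ is connected, quasi-split, and its derived group is simply connected (the derived group of $\u_{2n+1}$ is $\mathrm{SU}_{2n+1}$), Lang's theorem and the finite-field version of Kottwitz's rationality theorem \cite[Theorem 4.1]{K-rational} produce an element $\bar{t}\in\HH(k_F)$ in this conjugacy class. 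Thus $\bar{t}$ is a norm of $\bar{\tl{s}}$.

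\textbf{Step 3 (Lifting to $\oo_F$).} Next I would lift $\bar{t}$ to $t\in\HH(\oo_F)=\K_H$ so that $t$ remains a norm of $\tl{s}$. Because $\overline{N(s)}$ has order prime to $p$, its centralizer $\GG_{\overline{N(s)}}$ in the reduction is a smooth subgroup scheme of $\GG_{k_F}$, and this smoothness propagates to the transporter $\mathrm{Trans}_{\GG}(N(s),\,?)$ via Hensel: a choice of $\bar{g}\in\GG(\overline{k_F})$ with $\bar{g}^{-1}\overline{N(s)}\bar{g}=\bar{t}$ lifts (after a finite unramified base change) to $g\in\GG(\oo_F^{\mathrm{un}})$. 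Setting $t=g^{-1}N(s)g\in\GG(\oo_F^{\mathrm{un}})$ gives an element whose reduction is $\bar{t}$ and which is $\G(\overline{F})$-conjugate to $N(s)$; to conclude we need $t\in\HH(\oo_F)$, i.e. $\theta(t)=t$. As in the proof of Proposition \ref{90}(1), this amounts to trivializing a torsor under the smooth centralizer $\GG_{\tl{s}}$ in the flat topology, which is precisely the content of the generalized Hilbert 90 in Appendix \ref{appA}.

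\textbf{Main obstacle.} The crux is Step 3: an arbitrary lift of $\bar{t}$ to $\HH(\oo_F)$ need not lie in the $\G(\overline{F})$-conjugacy class of $N(s)$, so one must control the cohomological obstruction to finding a conjugator $g$ that is simultaneously integral and compatible with $\theta$. The hypothesis that $\tl{s}$ is topologically semisimple (hence of order prime to $p$) guarantees smoothness of the relevant centralizer and transporter schemes, after which Appendix \ref{appA} supplies the descent statement needed to produce $t\in\K_H$.
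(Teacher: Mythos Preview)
Your Steps 1--2 are broadly in line with the paper: reduce modulo $\pp_F$ and produce, over the residue field, an element of $\HH(k_F)$ conjugate to $\overline{N(s)}$. The paper does this by a single application of Lang's theorem (find $h$ with $h^{-1}\overline{\theta}(h)=\overline{s}$, where $\overline{\theta}$ is the $q$-Frobenius on $\HH(\overline{k_F})$, and set $\overline{\gamma}=h\,\overline{s}\,\overline{\theta(s)}\,h^{-1}$), which is slicker than invoking Kottwitz's rationality theorem, but the effect is the same.

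The real divergence is your Step 3, and here there is a genuine gap. You assert that the obstruction to lifting is ``trivializing a torsor under the smooth centralizer $\GG_{\tl{s}}$'' and that this is ``precisely the content'' of Appendix~\ref{appA}. It is not: Proposition~\ref{general90} is a commutative Hilbert~90 statement (for a cyclic group acting on a ring $B$, solve $x=y/\sigma(y)$), used in Lemma~\ref{lemA} to solve $t=s\theta(s)$ inside $\oo_E[t]$. It says nothing about torsors for the non-abelian group scheme $\GG_{\tl{s}}$, and your proposed conjugator $g\in\GG(\oo_F^{\mathrm{un}})$ would still need a separate Galois-descent argument to land $t$ in $\HH(\oo_F)$ rather than $\HH(\oo_F^{\mathrm{un}})$.

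More importantly, the obstacle you identify is illusory, and the paper's argument bypasses it entirely. Being a norm of $\tl{s}$ means being $\G(\overline{F})$-conjugate to $N(s)$, i.e.\ having the same eigenpolynomial. Since $N(s)$ has finite order prime to $p$, its eigenvalues are prime-to-$p$ roots of unity, and these are determined by their images in the residue field. So the paper simply lifts $\overline{\gamma}\in\HH(k_F)$ \emph{arbitrarily} to some $\gamma\in\HH(\oo_F)$, takes the topologically semisimple part $t=\gamma_s$, observes that $\overline{t}=\overline{\gamma}$ (the topologically unipotent part reduces to $1$), and concludes that $t$ and $N(s)$ have the same eigenpolynomial. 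No transporter schemes, no descent, no Appendix~\ref{appA}.
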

\begin{proof}
We denote the images of $s$ and $\theta(s)$ in $\HH(\overline{\oo_F/\pp_F})$ by 
$\overline{s}$ and $\overline{\theta(s)}$, respectively. 
Let $\overline{\theta}$ be the Frobenius map of $\HH(\overline{\oo_F/\pp_F})$, 
i.e., $\overline{\theta}((h_{i,j})_{i,j}) = J_N{}^t(h_{i,j}^q)_{i,j}^{-1}J_N^{-1}$. 
Then $\overline{\theta}(\overline{s}) = \overline{\theta(s)}$ 
and $\overline{\theta}(\overline{\theta(s)}) = \overline{s}$
(although $\overline{\theta}$ is not an involution).
By Lang's theorem, 
there exists $h \in \HH(\overline{\oo_F/\pp_F})$ such that 
$h^{-1} \overline{\theta}(h) = \overline{s}$.
If we set $\overline{\gamma} = h \overline{s} \overline{\theta(s)} h^{-1}$, 
then 
\begin{align*}
\overline{\theta}(\overline{\gamma}) 
= \overline{\theta}(h) \cdot \overline{\theta(s)} \overline{s} \cdot \overline{\theta}(h)^{-1}
= h \overline{s} \cdot \overline{\theta(s)} \overline{s} \cdot \overline{s}^{-1} h^{-1}
= \overline{\gamma}.
\end{align*}
Hence $\overline{\gamma} \in \HH(\oo_F/\pp_F)$. 
Take an arbitrary representative $\gamma \in \HH(\oo_F)$ of $\overline{\gamma}$.
Write $\gamma = tv$ for its TJD, 
and $\overline{t}, \overline{v}$ for the images of $t,v$ in $\HH(\oo_F/\pp_F)$. 
Since both $\overline{\gamma} = h \overline{s} \overline{\theta(s)} h^{-1}$ 
and $\overline{t}^{-1}$ are of finite order prime to $p$, 
and since $\overline{t}^{-1}$ commutes with $\overline{\gamma}$, 
we see that $\overline{v}$ is trivial, i.e., $\overline{\gamma} = \overline{t}$.
Especially, $t$ has the same eigenpolynomial as $N(s) = s\theta(s)$. 
This means that $t$ is a norm of $\tl{s}$. 
\end{proof}

Let $\T$ be the diagonal maximal torus of $\G$ and $\TT$ be its standard integral model over $\oo_F$. 
Then $\T_\theta = \T \cap \H$ is the diagonal maximal torus of $\H$ 
with an integral model $\TT_\theta = \TT \cap \HH$.
For simplicity, from now on, we assume that 
$\K = t_1 \GL_N(\oo_E) t_1^{-1}$ for some $t_1 \in T$. 
In particular, $\GG_0 = t_1^{-1}\GG t_1$ 
is a smooth integral model of $\G$ such that $\GG_0(\oo_F) = \GL_N(\oo_E)$.
Moreover, if we put $t_2 = J_N t_1^{-1} J_N^{-1}$ so that $t_2 = \overline{\theta(t_1)}$, 
since $\theta(\K) = \K$, 
we have 
\[
t_2 \GL_N(\oo_E) t_2^{-1} = \overline{t_1} \GL_N(\oo_E) \overline{t_1}^{-1}.
\]
\par

The following is a key fact for the proof of Proposition \ref{bij}. 
\begin{prop}\label{71}
\begin{enumerate}
\item
Let $\tl{s}, \tl{s}' \in \tl\K$ be topologically semisimple elements. 
If $\tl{s}$ and $\tl{s}'$ are $\G(\overline{F})$-conjugate, 
then $\tl{s}$ and $\tl{s}'$ are $\K$-conjugate. 

\item
Let $t,t' \in \K_H$ be topologically semisimple elements. 
If $t$ and $t'$ are $\H(\overline{F})$-conjugate, 
then $t$ and $t'$ are $\K_H$-conjugate. 
\end{enumerate}
\end{prop}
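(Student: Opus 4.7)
The plan is to adapt Kottwitz's argument in \cite[Proposition 7.1]{K-stable}: reduce modulo $\pp_F$, conjugate the images in the reductive special fiber by Lang's theorem, lift the conjugator to $\oo_F$ using smoothness of the integral model, and close the gap with a Hensel-type torsor lifting.

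For (2), write $\overline{t}, \overline{t}' \in \overline{\HH}(\mathbb{F}_q)$ for the reductions. Topological semisimplicity means $t$ and $t'$ have order prime to $p$, so $\overline{t}$ and $\overline{t}'$ are semisimple in the connected reductive unitary group $\overline{\HH}$, with the same characteristic polynomials as $t$ and $t'$. The $\H(\overline{F})$-conjugacy hypothesis forces these polynomials to coincide, so $\overline{t}$ and $\overline{t}'$ are $\overline{\HH}(\overline{\mathbb{F}_q})$-conjugate. Centralizers of semisimple elements in a unitary group are connected, so Lang's theorem yields $\overline{h} \in \overline{\HH}(\mathbb{F}_q)$ with $\overline{h}\,\overline{t}\,\overline{h}^{-1} = \overline{t}'$. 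Lift $\overline{h}$ through smoothness of $\HH$ and replace $t$ by its conjugate to reduce to the case $\overline{t} = \overline{t}'$. Then the transporter $\{h \in \HH : hth^{-1} = t'\}$ is a torsor under $\HH_{t'}$. Topological semisimplicity of $t'$ makes the special fiber of $\HH_{t'}$ the reductive centralizer of $\overline{t}'$ in $\overline{\HH}$, so $\HH_{t'}$ and hence the torsor are smooth over $\oo_F$. The identity is a point of the torsor over $\mathbb{F}_q$, and Hensel's lemma lifts it to the required $\K_H$-conjugator.

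For (1), run the same machinery for the $\theta$-twisted conjugation action $g \cdot (s \rtimes \theta) \cdot g^{-1} = (gs\theta(g)^{-1}) \rtimes \theta$ of $\GG$ on $\tl{\GG}$. The reductions $\overline{\tl{s}}$ and $\overline{\tl{s}'}$ are again of order prime to $p$, and the $\G(\overline{F})$-conjugacy of $\tl{s}$ and $\tl{s}'$ makes the reduced norms $N(\overline{s})$ and $N(\overline{s}')$ semisimple and $\overline{\GG}(\overline{\mathbb{F}_q})$-conjugate. By Proposition \ref{90}, the twisted centralizer $\GG_{\tl{s}'}$ is isomorphic to a unitary centralizer $\HH_\gamma$ for any norm $\gamma$ of $\tl{s}'$, so it is connected with reductive special fiber, hence smooth over $\oo_F$. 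A twisted analogue of Lang's theorem then produces $\overline{k} \in \overline{\GG}(\mathbb{F}_q)$ that $\theta$-twisted conjugates $\overline{\tl{s}}$ to $\overline{\tl{s}'}$; lifting $\overline{k}$ and applying Hensel's lemma to the $\GG_{\tl{s}'}$-torsor $\{k \in \GG : ks\theta(k)^{-1} = s'\}$ completes the proof exactly as in (2).

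The step I expect to require the most care is the twisted Lang-theorem input in (1): descending an $\overline{\mathbb{F}_q}$-conjugator between $\overline{\tl{s}}$ and $\overline{\tl{s}'}$ to an $\mathbb{F}_q$-conjugator for the $\theta$-twisted action. This reduces to the vanishing of a first cohomology set attached to the connected reductive twisted centralizer $\GG_{\tl{s}'}$, which is formal once the descent is organized correctly, but the twisted setup is precisely the context handled by the generalized Hilbert 90 of Appendix \ref{appA}, so some of that machinery will likely be imported here.
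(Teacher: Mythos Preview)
Your overall strategy---reduce to the residue field, invoke Lang's theorem, then lift via smoothness of the transporter scheme---is the same as the paper's, and part (2) is indeed just Kottwitz's \cite[Proposition 7.1]{K-stable}. The gap is in part (1), where you write ``By Proposition \ref{90}, the twisted centralizer $\GG_{\tl{s}'}$ is isomorphic to a unitary centralizer $\HH_\gamma$ \ldots\ hence smooth over $\oo_F$.'' Proposition \ref{90} only gives an isomorphism $\G_{\tl{s}'}(\overline{F}) \cong \H_\gamma(\overline{F})$ of $\overline{F}$-points; it says nothing about the integral model over $\oo_F$, and in particular does not give you smoothness of $\GG_{\tl{s}'}$ or of the transporter scheme $\YY = \{g \in \GG : g\tl{s}g^{-1} = \tl{s}'\}$ over $\oo_F$. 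Knowing that each fiber of $\YY$ is a torsor under a connected reductive group is not enough without flatness.

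The paper closes this gap differently: it first passes to a finite Galois extension $L/F$ over which both $\tl{s}$ and $\tl{s}'$ become $\G(L)$-conjugate to a diagonal element $\tl{s}'' \in \TT(\oo_L) \rtimes \theta$, proves directly (Lemma \ref{smooth}) that $\GG_{\tl{s}'',\oo_L}$ is smooth with connected reductive fibers, and then proves a non-trivial lemma (Lemma \ref{ext-conjugate}, via Iwasawa decomposition and a root-by-root analysis) showing that $\G(L)$-conjugacy upgrades to $\GG(\oo_L)$-conjugacy. This furnishes an $\oo_L$-point of $\YY$, so $\YY_{\oo_L}$ is a torsor under a smooth group scheme, hence smooth, and faithfully flat descent gives smoothness of $\YY$ over $\oo_F$. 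Only then does the Lang/Hensel argument go through. Your sketch is missing this integral conjugacy step entirely. Finally, Appendix \ref{appA} plays no role in Proposition \ref{71}; the generalized Hilbert 90 is used only later (Lemma \ref{lemA}) to produce an explicit $s \in \oo_E[t]$ with $N(s) = t$, which is a different issue.
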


The assertion (2) is \cite[Proposition 7.1]{K-stable}. 
The proof of (1) might be similar. 
For the sake of completeness, we give a detail of the proof of (1) in the rest of this subsection.
\par

For a finite extension $L$ of $F$, 
we denote the ring of integers of $L$ by $\oo_L$.
\par

\begin{lem}\label{smooth}
Let $L$ be a finite Galois extension of $F$ containing $E$. 
If $\tl{s} \in \TT(\oo_L) \rtimes \theta$ is topologically semisimple, 
then the centralizer subgroup scheme $\GG_{\tl{s},\oo_L}$
of $\tl{s}$ in $\GG_{\oo_L}$ is a smooth group scheme over $\oo_L$
with connected reductive fibers.
\end{lem}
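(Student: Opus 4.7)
The plan is to use the hypothesis $L \supseteq E$ together with the unramifiedness of $E/F$ to make the twisted group structure completely explicit, and then reduce the smoothness and reductivity statements to the classical description of centralizers of diagonal matrices in $\GL_N$. Since $E/F$ is unramified, the canonical isomorphism $\oo_L \otimes_{\oo_F} \oo_E \cong \oo_L \times \oo_L$ induces an identification $\GG_{\oo_L} \cong \GL_{N,\oo_L} \times \GL_{N,\oo_L}$, under which $\theta$ acts by $\theta(g_1,g_2) = (\iota(g_2),\iota(g_1))$, where $\iota(x) := J_N{}^tx^{-1}J_N^{-1}$ is the anti-involution of $\GL_N$ satisfying $\iota^2 = \id$.

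Writing $s = (s_1,s_2) \in \TT(\oo_L)$ with both $s_i$ diagonal, I would first translate the centralizer condition $g\tl{s} = \tl{s}g$ into the pair of equations $\iota(g_2) = s_1^{-1}g_1 s_1$ and $\iota(g_1) = s_2^{-1}g_2 s_2$, solve the first for $g_2 = \iota(s_1)\iota(g_1)\iota(s_1)^{-1}$, and substitute into the second. Because $s_1,s_2,\iota(s_1),\iota(s_2)$ are all diagonal and hence commute pairwise, the combined condition collapses to the single requirement that $g_1$ commute with an explicit diagonal element $D \in \TT_{\GL_N}(\oo_L)$ built from $s_1$ and $s_2$. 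Projection onto the first factor thus yields an isomorphism of $\oo_L$-group schemes $\GG_{\tl{s},\oo_L} \cong Z_{\GL_{N,\oo_L}}(D)$.

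To finish, I would exploit the topological semisimplicity of $\tl{s}$: the element $\tl{s}^2 = N(s)$ has finite order prime to $p$, so the diagonal entries $d_1,\dots,d_N$ of $D$ lie in $\mu_\ell(\oo_L)$ for some $\ell$ coprime to $p$. By Hensel's lemma the reduction map $\mu_\ell(\oo_L) \to \mu_\ell(k_L)$ is injective, so any two distinct entries of $D$ differ by a unit of $\oo_L$. The defining equations $(d_i - d_j)g_{ij} = 0$ of $Z_{\GL_{N,\oo_L}}(D)$ therefore cut out, uniformly on each fiber, the standard block-diagonal Levi $\prod_I \GL_{n_I,\oo_L}$ attached to the partition of $\{1,\dots,N\}$ by the level sets of $i \mapsto d_i$; this group scheme is manifestly smooth over $\oo_L$ with connected reductive fibers.

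The one delicate point is ensuring fiber-constancy of the block partition; this is exactly what the prime-to-$p$ hypothesis buys us via Hensel lifting of roots of unity, and without it the centralizer would fail to be flat over $\oo_L$ in general.
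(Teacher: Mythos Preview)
Your argument is correct and is essentially the paper's proof unpacked. The paper sets $t=\tl{s}^2\in\TT_\theta(\oo_L)$, invokes the computation of Proposition~\ref{90}(1) to get $\GG_{\tl{s},\oo_L}\cong\HH_{t,\oo_L}$, and then cites Kottwitz \cite[Proposition~7.1]{K-stable} for the smoothness and connected reductivity of the latter. Your reduction to $Z_{\GL_{N,\oo_L}}(D)$ is exactly that first step in the split picture (indeed $D$ is the first component of $N(s)$ under the product decomposition), and your block-Levi analysis via the prime-to-$p$ separation of eigenvalues is precisely what Kottwitz's argument specializes to for $\GL_N$. So the two proofs are the same computation, the paper outsourcing it to references and you carrying it out by hand; your version has the virtue of being self-contained.

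Two minor slips that do not affect the outcome: $\iota$ is a group automorphism of order two (an involution), not an anti-involution, so your formula for $g_2$ should read $g_2=\iota(s_1)^{-1}\iota(g_1)\iota(s_1)$. Also, the identification $\GG_{\oo_L}\cong\GL_{N,\oo_L}\times\GL_{N,\oo_L}$ coming directly from $\oo_L\otimes_{\oo_F}\oo_E\cong\oo_L\times\oo_L$ gives $\GG_{0,\oo_L}$, not $\GG_{\oo_L}$; one must compose with the isomorphism induced by conjugation by $t_0=(t_1,t_2)$, which is $\theta$-equivariant and carries $\TT$ to the diagonal torus, so the rest of your computation transfers without change.
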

\begin{proof}
We put $t = \tl{s}^2$. 
This is an element of $\HH(\oo_L)$ which belongs to $\TT_\theta(\oo_L)$. 
By the same argument as in the proof of Proposition \ref{90} (1), 
we see that the centralizer subgroup schemes $\GG_{\tl{s},\oo_L}$ and $\HH_{t,\oo_L}$ 
are isomorphic over $\oo_L$. 
Since $t$ is topologically semisimple, 
by the argument in the second paragraph of the proof of \cite[Proposition 7.1]{K-stable}, 
we see that $\HH_{t,\oo_L}$ is a smooth group scheme over $\oo_L$ 
with connected reductive fibers.
(Note that the derived group of $\H$ is simply-connected.)
\end{proof}

When $L$ is a finite extension of $F$ containing $E$, 
the isomorphism $E \otimes_F L \cong L \times L$ induces an isomorphism 
$\G(L) \cong \GL_{N}(L) \times \GL_{N}(L)$
such that 
\begin{itemize}
\item
$\GG(\oo_L)$ is mapped to 
\[
t_1 \GL_N(\oo_L) t_1^{-1} \times \overline{t_1} \GL_N(\oo_L) \overline{t_1}^{-1}
= t_1 \GL_N(\oo_L) t_1^{-1} \times t_2 \GL_N(\oo_L) t_2^{-1};
\]

\item
$\TT(\oo_L)$ (\resp $\T(L)$) is mapped to 
$\T_{N}(\oo_L) \times \T_{N}(\oo_L)$ (\resp $\T_{N}(L) \times \T_{N}(L)$), 
where $\T_{N}$ is the diagonal maximal torus of $\GL_N$; 

\item
the automorphism $\theta$ is given by 
$(g_1,g_2) \mapsto (J_{N}{}^tg_2^{-1}J_{N}^{-1}, J_{N}{}^tg_1^{-1}J_{N}^{-1})$.
\end{itemize}
Note that, if we put $t_0 = (t_{1},t_{2})$, then $\GG(\oo_L) = t_0 \GG_0(\oo_L) t_0^{-1}$.
Moreover, since $t_0$ is $\theta$-invariant, 
we have $t_0 (\TT(\oo_L) \rtimes \theta) t_0^{-1} = \TT(\oo_L)$ 
and $t_0 (\GG_0(\oo_L) \rtimes \theta) t_0^{-1} = \GG(\oo_L) \rtimes \theta$.

\begin{lem}\label{ext-conjugate}
Suppose that $L$ is a finite Galois extension of $F$ containing $E$. 
Let $\tl{s} \in \TT(\oo_L) \rtimes \theta$ and $\tl{s}' \in \GG(\oo_L) \rtimes \theta$
be topologically semisimple elements. 
If $\tl{s}$ and $\tl{s}'$ are $\G(L)$-conjugate, then $\tl{s}$ and $\tl{s}'$ are $\GG(\oo_L)$-conjugate.
\end{lem}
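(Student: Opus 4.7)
My plan is to adapt the proof of \cite[Proposition 7.1]{K-stable} to the twisted setting, using Lemma \ref{smooth} in place of its untwisted counterpart. Write $\tl s = s \rtimes \theta$ and $\tl s' = s' \rtimes \theta$, and introduce the transporter scheme
\[
X = \{\, g \in \GG_{\oo_L} \;|\; g s \theta(g)^{-1} = s' \,\},
\]
a closed subscheme of $\GG_{\oo_L}$ on which $\GG_{\tl s, \oo_L}$ acts freely by right multiplication. By hypothesis $X(L) \neq \emptyset$, so the generic fibre $X_L$ is a trivial $\GG_{\tl s, L}$-torsor; the task is to produce an $\oo_L$-point.

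Once $X$ is known to be a $\GG_{\tl s, \oo_L}$-torsor over $\mathrm{Spec}(\oo_L)$ in the fppf topology, smoothness of $\GG_{\tl s, \oo_L}$ from Lemma \ref{smooth} transfers to $X$, and the class $[X]$ lies in $H^1_{\mathrm{fppf}}(\oo_L, \GG_{\tl s, \oo_L})$. Since $\oo_L$ is henselian with finite residue field $k_L$ and $\GG_{\tl s, \oo_L}$ is smooth with connected reductive fibres, the standard Hensel-type identification reduces this cohomology set to $H^1(k_L, \GG_{\tl s, k_L})$, which vanishes by the Lang--Steinberg theorem. Hence $X(\oo_L) \neq \emptyset$, yielding the desired $\GG(\oo_L)$-conjugator.

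The remaining, and principal, difficulty is to verify that $X$ really is a torsor, i.e.\ that $X_{k_L} \neq \emptyset$. This amounts to showing that the reductions $\overline{\tl s}, \overline{\tl s'} \in \GG(k_L) \rtimes \theta$ are $\GG(k_L)$-twisted-conjugate. Both reductions are of finite order prime to $p$ and hence semisimple in the reductive group $\GG_{k_L}$. From the hypothesis one deduces, by a direct computation, that the norms $N(s), N(s') \in \GG(\oo_L)$ are $\G(L)$-conjugate in the ordinary sense, so their reductions share a common characteristic polynomial over $k_L$, making them $\GG(\overline{k_L})$-conjugate. The analogue over $\overline{k_L}$ of the norm correspondence underlying Proposition \ref{90} (whose proof only uses quasi-splitness and simple-connectedness of the derived group, both inherited over residue fields) then lifts this to $\GG(\overline{k_L})$-twisted-conjugacy of $\overline{\tl s}$ and $\overline{\tl s'}$. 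Applying Lang--Steinberg to the connected group $\GG_{\overline{\tl s}, k_L}$, whose connectedness is exactly what Lemma \ref{smooth} provides, descends this conjugacy to $\GG(k_L)$, producing the required $k_L$-point.

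The hard part is this passage from $\G(L)$-conjugacy to twisted-conjugacy of the reductions: an $\G(L)$-conjugator $g$ relating $\tl s$ and $\tl s'$ need not spread out integrally over any extension of $L$, so the comparison over $k_L$ cannot be read off directly from $g$ and must instead be routed through the numerical invariants (characteristic polynomials of norms) together with bijectivity of the norm correspondence over an algebraic closure. Everything else is a formal consequence of torsor-theoretic machinery combined with the smoothness and connectedness built into Lemma \ref{smooth}.
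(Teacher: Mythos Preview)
Your strategy is quite different from the paper's. The paper gives an elementary, hands-on argument: after reducing to $\GG=\GG_0$ it writes a $\G(L)$-conjugator as $x=kut$ via the Iwasawa decomposition, absorbs $k\in\GG(\oo_L)$ and $t\in\T(L)$ into $\tl s$ and $\tl s'$, and then shows by an explicit entry-by-entry computation with roots (climbing the derived series of $\U$) that the remaining $u\in\U(L)$ lies in $\UU(\oo_L)\cdot\G_{\tl s}(L)$. No transporter schemes or cohomology appear; this lemma is in fact the input that makes the torsor argument in the subsequent Proposition~\ref{71} work.

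Your torsor approach is natural and your argument that $X(k_L)\neq\emptyset$ (characteristic polynomials of $N(s),N(s')$, injectivity of the norm correspondence over $\overline{k_L}$, Lang--Steinberg using connectedness from Lemma~\ref{smooth}) is correct. The gap is the sentence ``once $X$ is known to be a $\GG_{\tl s,\oo_L}$-torsor''. Non-emptiness of both fibres does not give flatness of $X$ over $\oo_L$, so you do not yet have an fppf torsor; equivalently, the twisted-orbit map $\GG\to\GG$, $g\mapsto g s\theta(g)^{-1}$, is not smooth (its differential has cokernel of rank $\dim\GG_{\tl s}$), so Hensel does not lift your $k_L$-point for free. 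The $H^1$ step is therefore unavailable as stated, and is in any case redundant once you already hold a $k_L$-point.

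The gap is repairable. One conceptual fix: since $\tl s,\tl s'$ share a finite order $m$ prime to $p$, both lie in the closed subscheme $\Sigma=\{x:(x\rtimes\theta)^m=1\}\subset\GG_{\oo_L}$, which is smooth over $\oo_L$ (deformations of homomorphisms $\mathbb{Z}/m\to\GG\rtimes\langle\theta\rangle$ are unobstructed when $p\nmid m$); the twisted orbit of $s$ is then an open-and-closed component of $\Sigma$, hence closed in $\GG_{\oo_L}$, and the section $s'$ factors through it, after which $\GG\to O$ is a smooth $\GG_{\tl s}$-torsor and Hensel applies. A shorter fix, specific to this situation: after reducing to $\GG_0$ and splitting $\GG_{\oo_L}\cong\GL_N\times\GL_N$, conjugating by $(1,s_2^{-1})\in\GG(\oo_L)$ brings any $(s_1,s_2)\rtimes\theta$ to $(s_1\theta_0(s_2),1)\rtimes\theta$, and twisted $\GG(R)$-conjugacy of two such elements is exactly ordinary $\GL_N(R)$-conjugacy of the first coordinates; for semisimple elements of $\GL_N(\oo_L)$ of finite order prime to $p$ with the same characteristic polynomial this is immediate, since $\oo_L^N$ is a module over the \'etale $\oo_L$-algebra $\oo_L[T]/(T^{m'}-1)$ and such modules are determined by ranks. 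Either route completes your argument, but some such step must be supplied.
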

\begin{proof}
We claim that it suffices to show the assertion when $\GG = \GG_0$. 
Indeed, if $\tl{s} \in \TT(\oo_L) \rtimes \theta$ and $\tl{s}' \in \GG(\oo_L) \rtimes \theta$ are $\G(L)$-conjugate, 
then $t_0^{-1}\tl{s}t_0 \in \TT(\oo_L) \rtimes \theta$ and $t_0^{-1}\tl{s}'t_0 \in \GG_0(\oo_L) \rtimes \theta$
are $\G(L)$-conjugate.
If we were to find $y \in \GG_0(\oo_L)$ 
such that $t_0^{-1}\tl{s}'t_0 = y (t_0^{-1}\tl{s}t_0) y^{-1}$, 
then $\tl{s}' = (t_0 y t_0^{-1}) \tl{s} (t_0 y t_0^{-1})^{-1}$ with $t_0 y t_0^{-1} \in \GG(\oo_L)$.
Thus we may assume that $\GG = \GG_0$ so that $t_0 = (t_1,t_2)$ is trivial. 
\par

Let us suppose that $x \in \G(L)$ satisfies $\tl{s}' = x\tl{s}x^{-1}$. 
Let $\B = \T\U$ be the upper triangular Borel subgroup of $\G$ with the unipotent radical $\U$.
By the Iwasawa decomposition, 
we may write $x = kut$, where $k \in \GG(\oo_L)$, $u \in \U(L)$ and $t \in \T(L)$.
By replacing $\tl{s}'$ with $k\tl{s}'k^{-1}$, 
we may assume that $k$ is trivial. 
\par

Since $ut\tl{s}t^{-1}u^{-1} = \tl{s}' \in \GG(\oo_L)$, 
by looking at the diagonal entries, 
we see that $t$ must belong to $\TT(\oo_L) \cdot \T_\theta(L)$.
If we write $t = t't''$ with $t' \in \TT(\oo_L)$ and $t'' \in \T_\theta(L)$, 
since $\tl{s} \in \TT(\oo_L) \rtimes \theta$ is commutative with $t''$, 
we have 
\[
\tl{s}' = ut\tl{s}t^{-1}u^{-1} 
= ut'\tl{s}t'^{-1}u^{-1} 
= t' \cdot (t'^{-1}ut') \cdot \tl{s} \cdot (t'^{-1}ut')^{-1} \cdot t'^{-1}.
\]
Hence, by replacing $\tl{s}'$ with $t'\tl{s}' t'^{-1}$ 
and $u$ with $t'ut'^{-1}$, respectively, 
we may assume that $t$ is trivial, i.e., $\tl{s}' = u \tl{s} u^{-1}$ for $u \in \U(L)$. 
\par

To get the assertion, it is enough to show that 
$u$ belongs to $\UU(\oo_L)\G_{\tl{s}}(L)$, 
where $\UU$ is the standard integral model of $\U$.
Since both $\tl{s}' = u\tl{s}u^{-1}$ and $\tl{s}$ belong to $\GG(\oo_L)$, 
we have $u\tl{s}u^{-1}\tl{s}^{-1} \in \GG(\oo_L)$.
If we write $\tl{s} = s \rtimes \theta$ with $s \in \TT(\oo_L)$, 
then we have
\[
u \tl{s} u^{-1} \tl{s}^{-1} 
= u \cdot s \theta(u^{-1}) s^{-1} \in \GG(\oo_L).
\]
\par

Since $L$ contains $E$, 
we have an isomorphism $\GG(\oo_L) \cong \GL_{N}(\oo_L) \times \GL_{N}(\oo_L)$ as above. 
Note that it furthermore satisfies that 
$\UU(\oo_L)$ is mapped to $\U_{N}(\oo_L) \times \U_{N}(\oo_L)$, 
where $\U_{N}$ is the upper triangular unipotent  subgroup of $\GL_{N}$.
Then, by letting $s = (s_1, s_2)$ and $u = (u_1, u_2)$, 
we have 
\[
u \cdot s \theta(u^{-1}) s^{-1} 
= (u_1 \cdot s_1 J_{N}{}^tu_2J_{N}^{-1}s_1^{-1}, u_2 \cdot s_2 J_{N}{}^tu_1J_{N}^{-1}s_2^{-1}).
\]
\par

Write $u_1 = (u_{1,i,j})_{i,j}$ and $u_2 = (u_{2,i,j})_{i,j}$. 
Then, for $1 \leq k \leq N-1$, 
the $(k,k+1)$-entry of $u_1 \cdot s_1 J_{N}{}^tu_2J_{N}^{-1}s_1^{-1}$
is given by 
\[
u_{1,k,k+1} - \alpha_{k,k+1}(s_1) u_{2,N-k,N+1-k}, 
\]
where $\alpha_{k,k+1}$ denotes the root of $\T_{N}$ corresponding to the $(k,k+1)$-entry.
Similarly, the $(N-k,N+1-k)$-entry of $u_2 \cdot s_2 J_{N}{}^tu_1J_{N}^{-1}s_2^{-1}$
is given by 
\[
u_{2,N-k,N+1-k} - \alpha_{N-k, N+1-k}(s_2) u_{1,k,k+1}.
\]
Thus the condition that $u\tl{s}u^{-1}\tl{s}^{-1} \in \GG(\oo_L)$
implies that 
\begin{enumerate}
\item
$u_{1,k,k+1} - \alpha_{k,k+1}(s_1) u_{2,N-k,N+1-k} \in \oo_L$; and 
\item
$u_{2,N-k,N+1-k} - \alpha_{N-k, N+1-k}(s_2) u_{1,k,k+1} \in \oo_L$.
\end{enumerate}
Since $\tl{s}$ is topologically semisimple, 
$\tl{s}^2 = (s_1 J_{N}{}^ts_2^{-1}J_{N}^{-1}, s_2 J_{N}{}^ts_1^{-1}J_{N}^{-1})$
is of finite order prime to $p$. 
Hence $\alpha_{k,k+1}(s_1 J_{N}{}^ts_2^{-1}J_{N}^{-1})$, 
which equals $\alpha_{k,k+1}(s_1) \alpha_{N-k,N+1-k}(s_2)$, 
is a root of unity of order prime to $p$.
In particular, 
one of 
\begin{itemize}
\item
$\alpha_{k,k+1}(s_1) \alpha_{N-k,N+1-k}(s_2) = 1$; or 
\item
$1-\alpha_{k,k+1}(s_1) \alpha_{N-k,N+1-k}(s_2) \in \oo_L^\times$
\end{itemize}
holds.
\par

In the latter case, the matrix 
\[
\begin{pmatrix}
1 & -\alpha_{k,k+1}(s_1) \\
-\alpha_{N-k,N+1-k}(s_2) & 1
\end{pmatrix}
\]
belongs to $\GL_2(\oo_L)$. 
Hence the above two conditions (1) and (2)
imply that both $u_{1,k,k+1}$ and $u_{2,N-k,N+1-k}$ belong to $\oo_L$. 
Now we consider the former case, i.e., we assume that $\alpha_{k,k+1}(s_1) \alpha_{N-k,N+1-k}(s_2) = 1$.
In this case, 
if we define $v_1 = (v_{1,i,j})_{i,j}, v_2 = (v_{2,i,j})_{i,j} \in \U_{N}(L)$ by
$v_{1,i,j} = v_{2,N+1-j,N+1-i} = \delta_{i,j}$ unless $(i,j) = (k,k+1)$ 
(where $\delta_{i,j}$ is the Kronecker delta)
and by 
\[
v_{1,k,k+1} = u_{1,k,k+1}, 
\quad
v_{2,N-k,N+1-k} = \alpha_{N-k, N+1-k}(s_2) u_{1,k,k+1},
\]
then $v = (v_1,v_2) \in \G_{\tl{s}}(L)$. 
Moreover, the $(k,k+1)$-entry of $u_1v_1^{-1}$ equals $0$, 
and the $(N-k,N+1-k)$-entry of $u_2v_2^{-1}$ is given by 
$u_{2,N-k,N+1-k} - \alpha_{N-k, N+1-k}(s_2) u_{1,k,k+1}$, 
which lies in $\oo_L$ by (2). 
\par

By this observation, 
we see that $u$ belongs to $\UU(\oo_L) \cdot [\U(L), \U(L)] \cdot \G_{\tl{s}}(L)$, 
where $[\U(L), \U(L)]$ denotes the commutator subgroup of $\U(L)$.
By applying the same argument to the entries of $u$ 
appearing in $[\U(L), \U(L)] / [\U(L), [\U(L), \U(L)]]$, 
we next see that $u$ belongs to 
$\UU(\oo_L) \cdot [\U(L), [\U(L), \U(L)]] \cdot \G_{\tl{s}}(L)$. 
Repeating this procedure, we eventually conclude that 
$u$ is in fact an element of $\UU(\oo_L)\G_{\tl{s}}(L)$.
This completes the proof.
\end{proof}

Now we are ready to prove Proposition \ref{71} (1). 
\begin{proof}[Proof of Proposition \ref{71} (1)]
Let $\tl{s}, \tl{s}' \in \GG(\oo_F) \rtimes \theta$ be topologically semisimple elements. 
Suppose that $\tl{s}$ and $\tl{s}'$ are $\G(\overline{F})$-conjugate. 
We take a finite Galois extension $L$ of $F$ such that
\begin{itemize}
\item
$\tl{s}$ and $\tl{s}'$ are $\G(L)$-conjugate; 
\item
$L$ contains $E$; and 
\item
we can find an element $\tl{s}'' \in \TT(\oo_L) \rtimes \theta$
which is $\G(L)$-conjugate to both $\tl{s}$ and $\tl{s}'$.
\end{itemize}
Then, by Lemma \ref{ext-conjugate}, 
$\tl{s}$, $\tl{s}'$ and $\tl{s}''$ are $\GG(\oo_L)$-conjugate to each other. 
\par

Let us consider the closed subscheme $\YY$ of $\GG$ defined over $\oo_F$
whose set of $R$-valued points is given by 
\[
\YY(R) = \{g \in \GG(R) \;|\; g \tl{s}g^{-1} = \tl{s}' \}
\]
for any $\oo_F$-algebra $R$. 
Since $\tl{s}$, $\tl{s}'$ and $\tl{s}''$ are $\GG(\oo_L)$-conjugate to each other, 
we see that $\YY_{\oo_L}$ and $\GG_{\tl{s}, \oo_L}$ are isomorphic to $\GG_{\tl{s}'',\oo_L}$
as schemes over $\oo_L$.
Since $\GG_{\tl{s}'',\oo_L}$ is smooth over $\oo_L$ with connected reductive fibers by Lemma \ref{smooth}, 
so are $\YY_{\oo_L}$ and $\GG_{\tl{s}, \oo_L}$. 
Thus the faithfully-flatness of $\oo_L/\oo_F$ implies that 
$\YY$ and $\GG_{\tl{s}, \oo_F}$ are smooth over $\oo_F$ with connected reductive fibers.
\par

We write $\tl{s} = s \rtimes \theta$ and $\tl{s}' = s' \rtimes \theta$ with $s,s' \in \GG(\oo_F)$. 
Let $\overline{s}$ and $\overline{s}'$ be the images of $s$ and $s'$ 
under the reduction map $\GG(\oo_F) \rightarrow \GG(\oo_F/\pp_F)$, respectively. 
Since $\tl{s}$ and $\tl{s}'$ are $\GG(\oo_L)$-conjugate, 
we see that $\overline{s} \rtimes \theta$ and $\overline{s}' \rtimes \theta$
are $\GG(\oo_L/\pp_L)$-conjugate. 
Thus the standard argument via Lang's theorem 
on the vanishing of the first Galois cohomology of $\GG_{\tl{s},\oo_F/\pp_F}$
implies that $\overline{s} \rtimes \theta$ and $\overline{s}' \rtimes \theta$
are $\GG(\oo_F/\pp_F)$-conjugate. 
In other words, $\YY(\oo_F/\pp_F)$ is non-empty. 
Hence, by the smoothness of $\YY$ over $\oo_F$, 
we see that $\YY(\oo_F)$ is also non-empty. 
Therefore 
$\tl{s}$ and $\tl{s}'$ are $\GG(\oo_F)$-conjugate.
This completes the proof of Proposition \ref{71} (1).
\end{proof}

\subsection{Conjugacy classes of topologically semisimple elements}
Suppose from now that $N = 2n+1$ is odd.
Fix a positive integer $m > 0$. 
Recall that $\K_m$ is the subgroup of $G$ consisting of matrices $k$ of the form
\[
\bordermatrix{
&n&1&n \cr
n&\oo_E&\oo_E&\pp_E^{-m}\cr
1&\pp_E^m&1+\pp_E^m&\oo_E\cr
n&\pp_E^m&\pp_E^m&\oo_E
}
\]
with $\det(k) \in \oo_E^\times$.
Note that $\theta(\K_m) = \K_m$.
Set $\K_{m,H} = \K_m \cap H$.
\par

\begin{prop}\label{eigen=1}
\begin{enumerate}
\item
Let $\tl{s} \in \tl{\K}_m$ be a topologically semisimple element. 
Then there exists $k \in \K_m$ such that
\[
k^{-1} \tl{s} k \in 
\bordermatrix{
&n&1&n \cr
n&\oo_E&0&\pp_E^{-m}\cr
1&0&1&0\cr
n&\pp_E^m&0&\oo_E
} \rtimes \theta.
\]
\item
Let $t \in \K_{m,H}$ be a topologically semisimple element. 
Then there exists $k \in \K_{m,H}$ such that
\[
k^{-1} t k \in 
\bordermatrix{
&n&1&n \cr
n&\oo_E&0&\pp_E^{-m}\cr
1&0&1&0\cr
n&\pp_E^m&0&\oo_E
}.
\]
\end{enumerate}
\end{prop}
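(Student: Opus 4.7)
The plan is to produce the conjugator $k$ from the projector onto the $1$-eigenspace of $t$. I treat (2) in detail; (1) reduces to the analogous argument via the identity $N(k^{-1}s\theta(k)) = k^{-1}N(s)k$, which converts $\theta$-conjugation on $\tl{\G}$ into ordinary conjugation of $N(s)\in G$, itself a topologically semisimple element of $\K_m$ to which (2) can be applied.

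Write $t^{N_0}=1$ with $\gcd(N_0,p)=1$; then $N_0\in\oo_E^\times$, so the averaging operator $e := \tfrac{1}{N_0}\sum_{i=0}^{N_0-1} t^i$ is a well-defined projection onto the $1$-eigenspace $V_1$ of $t$. Crucially, because $\K_m$ is a group, every $t^i$ lies in $\K_m$, so every entry of $e$ obeys the same valuation bound as the corresponding entry of $\K_m$. In particular the vector $v := e\cdot e_{n+1}$ is a $t$-fixed vector satisfying $v_i\in\oo_E$ for $i\le n$, $v_{n+1}\in 1+\pp_E^m$, and $v_i\in\pp_E^m$ for $i>n+1$.

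A direct computation using $t\in H$ (which forces $\tilde\theta(t)=t^{-1}$ for $\tilde\theta(x):=J_N{}^t\overline{x}J_N^{-1}$) shows $\overline{(t^i)_{n+1,n+1}} = (t^{N_0-i})_{n+1,n+1}$, so after averaging $v_{n+1}\in F\cap(1+\pp_E^m)=1+\pp_F^m$; the last equality uses that $E/F$ is unramified. Self-adjointness of $e$ with respect to the Hermitian form $\langle\cdot,\cdot\rangle_H$, combined with $e^2=e$, yields $\langle v,v\rangle_H = (-1)^n v_{n+1}$. Using the surjectivity of the norm map $N_{E/F}\colon 1+\pp_E^m\twoheadrightarrow 1+\pp_F^m$, I choose $c\in 1+\pp_E^m$ with $c\overline{c}=v_{n+1}$ and set $v':=c^{-1}v\in V_1$; this vector retains the prescribed shape and satisfies $\langle v',v'\rangle_H = (-1)^n = \langle e_{n+1},e_{n+1}\rangle_H$, so by Witt's theorem it lies in the $H$-orbit of $e_{n+1}$.

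The crux is to upgrade this to a $\K_{m,H}$-orbit statement: produce $k\in\K_{m,H}$ with $k\cdot e_{n+1}=v'$. I would do so by an explicit construction, decomposing $v'-e_{n+1}$ along $V^+\oplus E e_{n+1}\oplus V^-$ (with $V^+:=\langle e_1,\dots,e_n\rangle$ and $V^-:=\langle e_{n+2},\dots,e_{2n+1}\rangle$) and writing $k$ as a product of elementary unitary transvections; entry-by-entry inspection of the defining inequalities of $\K_m$ places each factor in $\K_{m,H}$. Conjugation by this $k$ forces the middle column of $t$ to equal $e_{n+1}$. A symmetric argument applied to the left $1$-eigenvector $e_{n+1}^T e$ then produces a second conjugator of the form $I-e_{n+1}w^T$ with $w\in\pp_E^m\cdot\oo_E^{2n+1}$ that preserves the middle column and forces the middle row to equal $e_{n+1}^T$. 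The main obstacle is this constructive step: the asymmetric shape of $\K_m$ interacts delicately with the Hermitian constraint, and the hypotheses $p>2$ and $E/F$ unramified are both essential (through norm surjectivity on principal units and the absence of $2$-torsion).
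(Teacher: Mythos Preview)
For (2), your strategy matches the paper's: produce a $t$-fixed vector of the correct lattice shape and Hermitian norm, then extend to a basis giving $k\in\K_{m,H}$. Two remarks. Once $k^{-1}tk$ fixes $e_{n+1}$, it automatically preserves $e_{n+1}^\perp = \langle e_j : j\neq n+1\rangle$ because it lies in $H$; your ``second conjugator'' step is therefore unnecessary, and in any case a matrix of the form $I - e_{n+1}w^T$ is almost never unitary. For the extension itself, the paper uses the explicit Witt formula $e_j' = e_j + \frac{\langle e_j,\, e_0'-e_0\rangle}{\langle e_0,\, e_0'-e_0\rangle}(e_0'-e_0)$ (in its indexing $e_0 = e_{n+1}$), which only lands in $\K_m$ when $\langle e_0, e_0'-e_0\rangle \in \varpi^m\oo_E^\times$; this can fail, and the paper first multiplies $e_0'$ by $(1+\epsilon\varpi^m)/(1-\epsilon\varpi^m)$ to force the valuation to be exactly $m$. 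Your transvection construction is not obviously immune to an analogous obstruction.

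Your reduction of (1) to (2), however, has a genuine gap. The identity $N(k^{-1}s\theta(k)) = k^{-1}N(s)k$ shows that $\theta$-conjugation on $\tl{s}$ \emph{induces} ordinary conjugation on $N(s)$, but the converse fails: a $k$ that puts $N(s)$ in block form need not put $k^{-1}s\theta(k)$ in block form, because the latter depends on both $k$ and $\theta(k)$. Concretely, for $N=3$ and $a \in \pp_E^m \setminus \{0\}$, the element $s = \left(\begin{smallmatrix}1&0&0\\a&1&0\\0&-\bar{a}&1\end{smallmatrix}\right)\in\K_m$ satisfies $N(s) = I$ (trivially in block form with $k = I$) while $s$ itself is not; and $N(s)$ in general does not even lie in $H$, so (2) does not literally apply. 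The paper instead embeds $G\rtimes\langle\theta\rangle$ into $\GL_{2N}(E)$ via a faithful representation $\rho$ under which $\rho(\tl{s})$ swaps two copies $V, V'$ of $E^N$. One finds the fixed vector $e_0' \in V$ of $N(s)$ as you do, but then also sets $f_0' = \rho(\tl{s})e_0' \in V'$ and runs the Witt construction in $\tl{V} = V\oplus V'$; because the Hermitian pairing there characterizes $\rho(G)$, the resulting change-of-basis matrix has the form $\rho(k) = \diag(k,\theta(k))$, and this simultaneous control of $k$ and $\theta(k)$ is precisely what your reduction is missing.
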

\begin{proof}
To show (1), we prepare some notation. 
Let $\rho \colon G \rtimes \pair{\theta} \hookrightarrow \GL_{2N}(E)$ 
be a faithful representation given by 
\begin{align*}
\rho(g) = 
\begin{pmatrix}
g & 0 \\ 0 & \theta(g)
\end{pmatrix},
\quad
\rho(g \rtimes \theta) =
\begin{pmatrix}
0 & g \\
\theta(g) & 0
\end{pmatrix}.
\end{align*}
Set $\tl{V} = E^{2N}$ to be the representation space 
equipped with the canonical basis denoted by 
$e_{-n},\dots, e_{n}, f_{n}, \dots, f_{-n}$.
Let $\tl{L}$ be the $\oo_E$-lattice of $\tl{V}$ spanned by 
\[
\varpi^{-m} e_{-n},\dots, \varpi^{-m} e_{-1}, e_{0}, \dots, e_n,
\varpi^{-m} f_{n},\dots, \varpi^{-m} f_{1}, f_{0}, \dots, f_{-n}.
\]
Then $\rho(\K_m \rtimes \pair{\theta})$ preserves $\tl{L}$.
We denote by $V$ (\resp by $V'$) the subspace of $\tl{V}$
generated by $e_{-n},\dots, e_n$ (\resp by $f_{n},\dots, f_{-n}$). 
Set $L = \tl{L} \cap V$ and $L' = \tl{L} \cap V'$. 
Hence $\tl{V} = V \oplus V'$ and $\tl{L} = L \oplus L'$.
Define a hermitian pairing $\pair{\cdot, \cdot} \colon \tl{V} \times \tl{V} \rightarrow E$ so that 
$V$ and $V'$ are totally isotropic subspaces, and so that 
\[
\pair{
\sum_{i=-n}^n a_ie_i, 
\sum_{i=-n}^n b_i f_i 
}
= \sum_{i=-n}^n (-1)^{i-n} a_i \overline{b_i}
\]
for $a_i, b_i \in E$. 
Then $\rho(G)$ is characterized by 
the subgroup of $\GL(\tl{V})$ consisting of linear operators $f \colon \tl{V} \rightarrow \tl{V}$ 
such that $f(V) \subset V$, $f(V') \subset V'$ 
and such that $\pair{f(v),f(v')} = \pair{v,v'}$ for $v, v' \in \tl{V}$.
Moreover, one can check that 
$\pair{\rho(g \rtimes \theta)v, \rho(g \rtimes \theta)v'} = \pair{v,v'}$ for $v, v' \in \tl{V}$.
\par

Let $\tl{s} = s \rtimes \theta \in \tl{\K}_m$ be a topologically semisimple element. 
We consider $N(s) = s \theta(s) = (s \rtimes \theta)^2$.
It gives an $\oo_E$-linear automorphism of $L$.
Note that 
\[
\rho(N(s))e_0 \equiv e_0 \bmod \varpi^m L.
\]
Let $V_1$ (\resp $(L/\varpi^m L)_1$) be the eigenspace of 
$\rho(N(s))$ (\resp $\rho(N(s)) \bmod \varpi^m L$) with respect to the eigenvalue $1$.
Since $N(s)$ is of finite order prime to $p$, 
we see that 
$(L/\varpi^m L)_1$ is the image of $V_1 \cap L$ by the canonical projection $L \twoheadrightarrow L/\varpi^m L$.
In particular, 
there exists $v_0 \in V_1 \cap L$ such that $v_0 \equiv e_0 \bmod \varpi^m L$. 
\par

Set $v_0' = \rho(s \rtimes \theta)v_0$.
Then we have $v_0' \in L'$ and $v_0' \equiv f_0 \bmod \varpi^m L'$.
In particular, $(-1)^n\pair{v_0,v_0'} \in 1+\pp_E^m$ since $\pair{e_0,f_0} = (-1)^n$.
On the other hand, since $\pair{\cdot, \cdot}$ is $\rho(s \rtimes \theta)$-invariant, 
we have
\begin{align*}
\pair{v_0,v_0'} &= \pair{\rho(N(s))v_0, \rho(s \rtimes \theta)v_0}
\\&= \pair{\rho(s \rtimes \theta)v_0,v_0}
= \pair{v_0',v_0} = \overline{\pair{v_0,v_0'}}.
\end{align*}
Hence $(-1)^n\pair{v_0,v_0'} \in (1+\pp_E^m) \cap F = 1+\pp_F^m = N_{E/F}(1+\pp_E^m)$.
In particular, we can find $z \in 1+\pp_E^m$ such that 
if we set $e_0' = z v_0$ and $f_0' = z v_0'$, 
then 
\begin{itemize}
\item
$e_0' \in \oo_E e_{-n} \oplus \dots \oplus \oo_E e_{-1} 
\oplus (1+\pp_E^m) e_0 \oplus \pp_E^m e_1 \oplus \dots \oplus \pp_E^m e_n$; 
\item 
$f_0' \in \oo_E f_{n} \oplus \dots \oplus \oo_E f_{1} 
\oplus (1+\pp_E^m) f_0 \oplus \pp_E^m f_{-1} \oplus \dots \oplus \pp_E^m f_{-n}$;
\item
$\pair{e_0', f_0'} = (-1)^n = \pair{e_0,f_0}$; 
\item
$f_0' = \rho(s \rtimes \theta)e_0'$ and $e_0' = \rho(s \rtimes \theta)f_0'$.
\end{itemize}
Fix $\epsilon \in \oo_E^\times$ such that $\overline{\epsilon} = -\epsilon$. 
For $a \in \oo_F^\times$, set 
\[
z_a = \frac{1+ a \epsilon \varpi^m}{1-a \epsilon \varpi^m}. 
\]
Then $N_{E/F}(z_a) = 1$ and $z_a \equiv 1+2a \epsilon \varpi^m \bmod \pp_E^{2m}$.
If necessary, 
by replacing $e_0'$ and $f_0'$ by $z_a e_0'$ and $z_a f_0'$ simultaneously for a suitable $a \in \oo_F^\times$, 
we may furthermore assume that 
\begin{itemize}
\item
$\pair{e_0,f_0'-f_0}, \pair{f_0,e_0'-e_0} \in \varpi^m \oo_E^\times$.
\end{itemize}
\par

Based on an argument of Witt's theorem, 
for $-n \leq j \leq n$ with $j \not= 0$, 
we set
\begin{align*}
e_j' &= e_j + \frac{\pair{e_j, f_0'-f_0}}{\pair{e_0, f_0'-f_0}}(e_0'-e_0), \\
f_j' &= f_j + \frac{\pair{f_j, e_0'-e_0}}{\pair{f_0, e_0'-e_0}}(f_0'-f_0). 
\end{align*}
We claim that $\pair{e_i', f_j'} = \pair{e_i,f_j}$ for any $-n \leq i,j \leq n$. 
Indeed, if both $i$ and $j$ are not equal to $0$, then 
\begin{align*}
&\pair{e_i', f_j'} - \pair{e_i,f_j} 
\\&= 
\frac{\pair{e_i,f_0'-f_0}}{\pair{e_0,f_0'-f_0}}
\overline{\left(\frac{\pair{f_j, e_0'-e_0}}{\pair{f_0, e_0'-e_0}}\right)}
\left(\pair{e_0,f_0'-f_0} + \overline{\pair{f_0,e_0'-e_0}} + \pair{e_0'-e_0, f_0'-f_0} \right)
\\&=
\frac{\pair{e_i,f_0'-f_0}}{\pair{e_0,f_0'-f_0}}
\overline{\left(\frac{\pair{f_j, e_0'-e_0}}{\pair{f_0, e_0'-e_0}}\right)}
\left( \pair{e_0',f_0'} - \pair{e_0,f_0} \right) = 0.
\end{align*}
Similarly, one can easily check that $\pair{e_j', f_0'} = \pair{e_0', f_j'} = 0$ for $j \not= 0$.
Therefore, 
the change-of-basis matrix from 
$(e_{-n},\dots, e_{n}, f_{n}, \dots, f_{-n})$
to $(e_{-n}',\dots, e_{n}', f_{n}', \dots, f_{-n}')$
is given by $\rho(k)$ for some $k \in G$. 
Moreover, by construction, we see that $k \in \K_{m}$.
Since the orthogonal complement of $\{e_0', f_0'\}$ in $\tl{V}$, 
which is spanned by $\{e_j', f_j'\}_{j \not= 0}$, 
is preserved by $\rho(s \rtimes \theta)$, 
we have
\[
k^{-1} (s \rtimes \theta) k \in 
\bordermatrix{
&n&1&n \cr
n&\oo_E&0&\pp_E^{-m}\cr
1&0&1&0\cr
n&\pp_E^m&0&\oo_E
} \rtimes \theta,
\]
as desired. 
\par

The proof of (2) is similar and easier. 
Let us give a brief sketch. 
\begin{itemize}
\item
We regard $V = E^N$ as a hermitian space so that $H = \u(V)$. 
Write $e_{-n}, \dots, e_n$ for the canonical basis. 

\item
By the same argument as in the proof of (1), 
one can take 
\[
e_0' \in \oo_Ee_{-n} \oplus \dots \oo_E e_{-1} \oplus (1+\pp_E^m)e_0 
\oplus \pp_E^m e_1 \oplus \dots \oplus \pp_E^m e_n
\]
such that $te_0' = e_0'$. 
Moreover, one can normalize $e_0'$ so that $\pair{e_0',e_0'} = \pair{e_0,e_0}$. 

\item
In addition, by replacing $e_0'$ with 
\[
\frac{1+\epsilon\varpi^m}{1-\epsilon\varpi^m}e'_0
\]
if necessary, we may furthermore assume that $\pair{e_0, e_0'-e_0} \in \varpi^m \oo_E^\times$.

\item
For $-n \leq j \leq n$ with $j \not= 0$, 
define $e_j'$ by 
\[
e_j' = e_j + \frac{\pair{e_j, e_0'-e_0}}{\pair{e_0, e_0'-e_0}}(e_0'-e_0). 
\]
It is easy to check that $\pair{e_i',e_j'} = \pair{e_i,e_j}$ for $-n \leq i,j \leq n$.

\item
The change-of-basis matrix from 
$(e_{-n},\dots, e_{n})$ to $(e_{-n}',\dots, e_{n}')$
gives an element $k \in \K_{m,H}$. 
This element satisfies the desired condition.
\end{itemize}
This completes the proof.
\end{proof}

\begin{cor}\label{leq1}
For $\gamma \in H^\srs$, 
we have $|I_\gamma(\K_m)| \leq 1$ and $|J_\gamma(\K_{m,H})| \leq 1$.
\end{cor}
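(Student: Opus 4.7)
The plan is to reduce each statement to Proposition~\ref{71} applied to the smaller group $\tl\G' = \G' \rtimes \theta'$ (resp.\ $\H'$), exploiting the fact from Section~\ref{sec.groups} that $\iota^{-1}(\K_m)$ (resp.\ $\iota^{-1}(\K_{m,H})$) is a hyperspecial maximal compact subgroup of $G'$ (resp.\ $H'$). The bridge is Proposition~\ref{eigen=1}, which $\K_m$- (resp.\ $\K_{m,H}$-)conjugates any topologically semisimple element of $\tl\K_m$ (resp.\ of $\K_{m,H}$) into the block-form subgroup $\iota(\G')(F) \rtimes \theta$ (resp.\ $\iota(\H')(F)$).

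I treat part~(2) first, since it is conceptually simpler. Given $t_1, t_2 \in J_\gamma(\K_{m,H})$, Proposition~\ref{eigen=1}(2) lets us assume, after $\K_{m,H}$-conjugation, that $t_i = \iota(t'_i)$ with $t'_i \in \iota^{-1}(\K_{m,H})$. By definition both are $\H(\overline{F})$-conjugate. To descend this to $\H'(\overline{F})$-conjugacy of the $t'_i$, I use a Witt-type argument: any $g \in \H(\overline{F})$ with $g\iota(t'_1)g^{-1} = \iota(t'_2)$ maps the $1$-eigenspace of $\iota(t'_1)$ to that of $\iota(t'_2)$, both of which contain $e_0$ (by the block structure) and are non-degenerate for the hermitian form. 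The centralizer $\H_{\iota(t'_2)}(\overline{F})$ acts as the full unitary group on its $1$-eigenspace, so Witt's theorem lets us modify $g$ by a centralizer element to achieve $g(e_0)=e_0$. Since $g$ preserves the hermitian form and $\langle e_0\rangle^\perp = V' = \langle e_i \mid i\neq 0\rangle$, we then have $g \in \iota(\H')(\overline{F})$. Proposition~\ref{71}(2) applied to $\H'$ with hyperspecial maximal compact $\iota^{-1}(\K_{m,H})$ now gives $\iota^{-1}(\K_{m,H})$-conjugacy of the $t'_i$, hence $\K_{m,H}$-conjugacy of the $t_i$.

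Part~(1) is analogous but in the $\theta$-twisted setting. By Proposition~\ref{eigen=1}(1), we may write $\tl{s}_i = \iota(s'_i) \rtimes \theta$ after $\K_m$-conjugation. The $\G(\overline{F})$-twisted conjugacy $g s_1 \theta(g^{-1}) = s_2$ implies $g N(s_1) g^{-1} = N(s_2)$ under ordinary conjugation, and both $N(s_i)$ fix $e_0$ by the block form. The twisted centralizer $\G_{\tl{s}_2}(\overline{F}) \cong \H_{\gamma_s}(\overline{F})$ (Proposition~\ref{90}(1)) preserves the $1$-eigenspace of $N(s_2)$ and acts on it matching the full unitary action of $\H_{\gamma_s}$ on the $1$-eigenspace of $\gamma_s$. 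Hence the analogous Witt move — now via left multiplication by an element of $\G_{\tl{s}_2}(\overline{F})$, which preserves the twisted conjugacy relation — lets us assume $g(e_0)=e_0$, whence $g \in \iota(\G')(\overline{F})$. Translating via the $\kappa$-conjugation of Section~\ref{sec.groups} to $\tl\G' = \G' \rtimes \theta'$ (noting $\kappa \in \iota^{-1}(\K_m)$), we obtain $\G'(\overline{F})$-conjugacy of the corresponding elements in $\tl\G'(F)$; Proposition~\ref{71}(1) then yields the desired $\K_m$-conjugacy.

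The main obstacle is the descent step, particularly in part~(1), where one must verify that the Witt adjustment can be carried out within the $\theta$-twisted centralizer (thereby preserving the twisted conjugacy relation) rather than the ordinary centralizer of $N(s_2)$. This rests on the identification $\G_{\tl{s}_2}(\overline{F}) \cong \H_{\gamma_s}(\overline{F})$ from Proposition~\ref{90}(1) together with the compatibility, forced by the block form, between the twisted and ordinary structures on the $1$-eigenspace.
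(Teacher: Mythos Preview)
Your argument for part~(2) is correct, though more elaborate than necessary: since $\H(\overline{F})\cong\GL_N(\overline{F})$ and $\H'(\overline{F})\cong\GL_{2n}(\overline{F})$, the $\H(\overline{F})$-conjugacy of $\iota(t_1')$ and $\iota(t_2')$ is simply equality of eigenvalue multisets, and stripping the common eigenvalue~$1$ on $e_0$ gives the same for $t_1',t_2'$ --- no Witt move is needed. This is what the paper means by ``immediately follows.''

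Part~(1), however, has a genuine gap at the step ``whence $g\in\iota(\G')(\overline{F})$.'' Over $\overline{F}$ one has $\G(\overline{F})\cong\GL_N(\overline{F})\times\GL_N(\overline{F})$, and membership of $g=(g_1,g_2)$ in $\iota(\G')(\overline{F})$ requires \emph{both} $g_i e_0=e_0$ \emph{and} ${}^tg_i e_0=e_0$ for $i=1,2$. The hermitian form that, in part~(2), forces $g(e_0)=e_0\Rightarrow g(V')=V'$ is simply absent here: a general element of $\GL_N(\overline{F})$ fixing $e_0$ need not preserve the complementary coordinate hyperplane. Moreover, the Witt move itself is problematic: an element $h=(h_1,h_2)\in\G_{\tl{s}_2}(\overline{F})$ has $h_2$ completely determined by $h_1$, so one cannot independently arrange $h_1g_1e_0=e_0$ and $h_2g_2e_0=e_0$. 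Unwinding the twisted relation shows the second condition is equivalent to ${}^th_1({}^tg_1e_0)=e_0$, but ${}^tg_1e_0$ lies in the $1$-eigenspace of ${}^tA_1$ rather than of ${}^tA$ (where $A,A_1$ are the first components of $N(s_2),N(s_1)$), so it need not even lie in the subspace on which ${}^th_1$ acts. Your appeal to $\G_{\tl{s}_2}(\overline{F})\cong\H_{\gamma_s}(\overline{F})$ is an abstract group isomorphism and does not furnish an invariant form making the Witt step go through.

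The paper avoids this entirely. Once $s_i=\iota(s_i')$, one has $\iota(s_i'\theta'(s_i'))=N(s_i)$, so the eigenvalue multisets of $N'(s_1')$ and $N'(s_2')$ agree (each is that of $N(s_i)$ with one copy of $1$ removed). By the bijectivity of the norm map $\AA'$ for $(\G',\theta')$, this already gives $\G'(\overline{F})$-conjugacy of $s_1'\rtimes\theta'$ and $s_2'\rtimes\theta'$ --- no modification of $g$ is needed. One then passes to $(\G',\theta)$ via the $\kappa$-conjugation and applies Proposition~\ref{71}(1) with the hyperspecial $\K_m'=\iota^{-1}(\K_m)$.
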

\begin{proof}
The assertion $|J_\gamma(\K_{m,H})| \leq 1$
immediately follows from Propositions \ref{eigen=1} (2) and \ref{71} (2).
Similarly, the assertion $|I_\gamma(\K_m)| \leq 1$ 
is proven by using Propositions \ref{eigen=1} (1) and \ref{71} (1), 
but we have to check it carefully. 
\par

Let $\tl{s}_1 = s_1 \rtimes \theta, \tl{s}_2 = s_2 \rtimes \theta \in I_\gamma(\K_m)$. 
We will show that $\tl{s}_1$ is $\K_m$-conjugate to $\tl{s}_2$. 
By Proposition \ref{eigen=1} (1), 
we may assume that 
\[
\tl{s}_1,\tl{s}_2 \in 
\bordermatrix{
&n&1&n \cr
n&\oo_E&0&\pp_E^{-m}\cr
1&0&1&0\cr
n&\pp_E^m&0&\oo_E
} \rtimes \theta
\]
after taking $\K_m$-conjugations if necessary. 
\par

Recall from Section \ref{sec.groups} that
we have an inclusion $\iota \colon G' = \GL_{2n}(E) \hookrightarrow G = \GL_{2n+1}(E)$. 
If we set $\K'_m = \iota^{-1}(\K_m) \subset G'$, 
then we can write $s_1 = \iota(s_1'), s_2 = \iota(s_2')$ for some $s_1', s_2' \in \K'_m$.
Although $G$ and $G'$ have involutions both denoted by $\theta$, 
they are not compatible with respect to $\iota$. 
Indeed, $G'$ has another involution $\theta'$ 
such that $\iota(\theta'(x)) = \theta(\iota(x))$ for $x \in G'$. 
Two involutions $\theta$ and $\theta'$ on $G'$ are related by 
$\theta'(x) = \kappa \cdot \theta(\kappa^{-1}x\kappa) \cdot \kappa^{-1}$ for $x \in G'$, 
where $\kappa \in \K'_m$ is as in Section \ref{sec.groups}.
In particular,
\[
(s_i' \rtimes \theta')^2 = s_i' \theta'(s_i') 
= s_i' \kappa \theta(\kappa^{-1} s_i' \kappa) \kappa^{-1}
= \kappa(\kappa^{-1}s_i'\kappa \rtimes \theta)^2 \kappa^{-1}.
\]
Since $\iota((s_i' \rtimes \theta')^2) = (s_i \rtimes \theta)^2$, 
we see that $\kappa^{-1}s_i'\kappa \rtimes \theta$ 
is a topologically semisimple element in $G' \rtimes \theta$. 
Moreover, by considering the norm correspondence, 
there exists $g \in \G'(\overline{F})$ such that 
$s_2' \rtimes \theta' = g(s_1' \rtimes \theta')g^{-1}$. 
It is equivalent to saying that 
\begin{align*}
&s_2' = gs_1' \theta'(g^{-1}) = g s_1' \kappa \theta(\kappa^{-1}g^{-1}\kappa) \kappa^{-1}
\\&\iff 
\kappa^{-1}s_2'\kappa 
= (\kappa^{-1}g\kappa) \cdot \kappa^{-1}s_1'\kappa \cdot \theta(\kappa^{-1}g^{-1}\kappa). 
\end{align*}
Hence $\kappa^{-1}s_2'\kappa \rtimes \theta$ is $\G'(\overline{F})$-conjugate to 
$\kappa^{-1}s_1'\kappa \rtimes \theta$.
By Proposition \ref{71} (1), 
there exists $k' \in \K_m'$ such that 
$\kappa^{-1}s_2'\kappa \rtimes \theta = k'(\kappa^{-1}s_1'\kappa \rtimes \theta)k'^{-1}$.
By the same calculation as above, 
it is equivalent to saying that 
$s_2' \rtimes \theta' = 
(\kappa k' \kappa^{-1}) \cdot (s_1' \rtimes \theta') \cdot (\kappa k' \kappa^{-1})^{-1}$.
If we set $k = \iota(\kappa k' \kappa^{-1}) \in \K_m$, 
then we have $s_2 \rtimes \theta = k(s_1 \rtimes \theta)k^{-1}$, 
as desired. 
\end{proof}

\begin{lem}\label{A=>B}
If $J_\gamma(\K_{m,H}) \not= \emptyset$, 
then $I_\gamma(\K_{m}) \not= \emptyset$. 
Moreover, for $t \in J_\gamma(\K_{m,H})$, 
we can assume that $\tl{s} \in I_\gamma(\K_m)$ satisfies 
the three conditions in Proposition \ref{bij}.
\end{lem}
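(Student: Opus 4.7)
The plan is to associate to a given $t \in J_\gamma(\K_{m,H})$ an element $\tl s = s \rtimes \theta \in I_\gamma(\K_m)$ by adapting the explicit construction from the proof of Proposition \ref{90}(2), namely $s = \alpha I_N + \overline\alpha t$ for a suitable $\alpha \in \oo_E$ with $\alpha + \overline\alpha = 1$. Two of the three additional conditions in Proposition \ref{bij} will then follow automatically: $s \theta(s) = t$ gives that $t$ (hence its $\H(\overline F)$-conjugate $\gamma_s$) is a norm of $\tl s$, and the centralizer calculation in the proof of Proposition \ref{90}(2) gives $\G_{\tl s} = \H_t$.

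The first step is to invoke Proposition \ref{eigen=1}(2) to conjugate $t$ by $\K_{m,H}$ into the stated block form, so that its middle row and column are $(0,1,0)$. With this normalization, $s = \alpha I_N + \overline\alpha t$ has a nearly identical shape: its middle entry is $\alpha + \overline\alpha = 1 \in 1 + \pp_E^m$, its off-diagonal middle row and column are zero, and its other entries are $\oo_E$-linear combinations of the corresponding entries of $I_N$ and $t$, which lie in the rings required for membership in $\K_m$. Thus the only remaining conditions for $s \in \K_m$ are invertibility together with $\det s \in \oo_E^\times$.

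The main obstacle is arranging this determinant condition, which forces one to vary $\alpha$ rather than taking the naive choice $\alpha = \tfrac{1}{2}$. Writing $\alpha = \tfrac{1}{2} + \beta \epsilon$ with $\beta \in \oo_F$ (valid since $p > 2$, with $\epsilon \in \oo_E^\times$ satisfying $\overline\epsilon = -\epsilon$), one has $\det s = \prod_i (\alpha + \overline\alpha \lambda_i)$, where the $\lambda_i$ are the eigenvalues of $t$ in some unramified extension $L/E$. Because $t$ is topologically semisimple, these are finitely many roots of unity of order prime to $p$, so each condition $\alpha + \overline\alpha \lambda_i \notin \oo_L^\times$ cuts out a proper closed subset of $\beta \in \oo_F$, and since $\oo_F$ is infinite, $\beta$ can be chosen to avoid all of them simultaneously. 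For such $\alpha$ the proof of Proposition \ref{90}(2) yields $N(s) = t$ and $\G_{\tl s} = \H_t$, while $\tl s^2 = t$ being topologically semisimple forces $\tl s$ to be topologically semisimple as well.

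Finally, to produce a topologically unipotent $u \in \K_m \cap G_{\tl s}$ with $\gamma$ a norm of $\tl s u$, I would start from the element $v \in \K_{m,H} \cap H_t$ appearing in the data defining $t \in J_\gamma(\K_{m,H})$. The homeomorphism property of squaring on topologically unipotent elements of $H_t$, the same input invoked in the proof of Theorem \ref{transfer}, produces a unique topologically unipotent square root $u$ of $v$ inside the closure of $\langle v\rangle \subset \K_{m,H} \cap H_t = \K_m \cap G_{\tl s}$. Since $u$ commutes with $\tl s$, one computes $(\tl s u)^2 = \tl s^2 u^2 = tv$, which is $\H(\overline F)$-conjugate to $\gamma$, so $\gamma$ is indeed a norm of $\tl s u$. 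This places $\tl s$ in $I_\gamma(\K_m)$ and simultaneously verifies the remaining condition $v = u^2$ of Proposition \ref{bij}.
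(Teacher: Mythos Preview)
Your approach is essentially the elementary one the paper records in Remark~\ref{A=>B_easy}, and everything except the choice of $\alpha$ is correct and matches the paper. The gap is in the sentence ``since $\oo_F$ is infinite, $\beta$ can be chosen to avoid all of them simultaneously.'' For each eigenvalue $\lambda_i\neq 1$ the bad locus $\{\beta\in\oo_F : \alpha+\overline\alpha\lambda_i\in\pp_L\}$ is not a thin closed set in any useful sense: writing $\alpha=\tfrac12+\beta\epsilon$, the condition reduces (since $1-\lambda_i\in\oo_L^\times$ and $L/F$ is unramified) to a single congruence on $\beta$ modulo $\pp_F$, so the bad locus is either empty or an entire coset of $\pp_F$ in $\oo_F$. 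Such a coset is clopen of Haar measure $1/q$, and $\oo_F$ is itself the union of its $q$ cosets mod $\pp_F$, so finitely many of them can certainly cover $\oo_F$; the infinitude of $\oo_F$ buys nothing here. Your argument therefore succeeds only when the number of bad cosets is at most $q-1$, i.e.\ when $q>N$, which is exactly the hypothesis isolated in Remark~\ref{A=>B_easy}. For $q\leq N$ there may be no admissible $\alpha$ of this form.

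The paper handles the general case by a different mechanism: Lemma~\ref{lemA} in the appendix shows that for any semisimple $t\in\K_{m,H}$ there exists $s\in\oo_E[t]\cap\K_m$ with $t=s\theta(s)$, proved by applying a generalized Hilbert's Theorem~90 (Proposition~\ref{general90}, via faithfully flat descent) to the commutative $\oo_E$-algebra $B=\oo_E[t]$ with the involution $\sigma(x)=J_N{}^t\overline xJ_N^{-1}$. This sidesteps the counting obstruction entirely. Once such an $s$ is in hand, the remainder of your argument---the centralizer identity $\G_{\tl s}=\H_t$ from the proof of Proposition~\ref{90}(2), topological semisimplicity of $\tl s$, and extracting the unique topologically unipotent square root $u$ of $v$ inside $\K_{m,H}\cap H_t=\K_m\cap G_{\tl s}$ so that $(\tl s u)^2=tv$---is exactly what the paper does and is correct as written.
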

\begin{proof}
Let $t \in J_\gamma(\K_{m,H})$.
In Lemma \ref{lemA} below, 
we will show that
there exists $\tl{s} = s \rtimes \theta \in \tl\K_m$ with $s \in \oo_E[t]$ 
such that $t = N(s)$. 
In particular, $t$ is a norm of $\tl{s} = s \rtimes \theta$.
Since every $g \in \G_t$ commutes with $s$, 
by the same argument as in the proof of Proposition \ref{90} (2), 
we have $\G_{\tl{s}} = \H_t$ as algebraic subgroups of $\G$.
Let $v \in \K_{m,H} \cap H_t$ be a topologically unipotent element 
such that $tv$ is $\H(\overline{F})$-conjugate to $\gamma$. 
By the same argument as in \cite[Lemma 3.2.7]{GV}, 
there exists a unique topologically unipotent element $u \in H_t = G_{\tl{s}}$ such that $v = u^2$. 
Moreover, since $\K_{m}$ is closed in $G$ and $p$ is odd, 
we have $u \in \K_m \cap G_{\tl{s}}$.
Since $(\tl{s}u)^2 = s\theta(u)\theta(s)u = N(s)u^2 = tv$, 
we see that $tv$ is a norm of $\tl{s}u$. 
Therefore, we conclude that $\tl{s} \in I_\gamma(\K_{m})$.
\end{proof}

\begin{rem}\label{A=>B_easy}
For the proof of Lemma \ref{lemA},
we will generalize Hilbert's Theorem 90 by using the \emph{faithfully flat descent} (Proposition \ref{general90}).
However, if $q = |\oo_F/\pp_F| > N$, 
one can easily show the existence of 
$\tl{s} = s \rtimes \theta \in \tl\K_m$ with $s \in \oo_E[t]$ 
such that $t = N(s)$ as follows. 
\par

As in the proof of Proposition \ref{90} (2), 
we consider $s = (\alpha I_N + \overline{\alpha}t)/(\alpha+\overline{\alpha})$ 
for $\alpha \in \oo_E^\times$ with $\alpha+\overline{\alpha} \not= 0$.
It suffices to find $\alpha$ such that $\det(s) \in \oo_E^\times$.
If we denote the eigenpolynomial of $t$ by $\Phi_t$, 
we have
$\det(s) = (-\overline{\alpha}/(\alpha+\overline{\alpha}))^N \Phi_t(-\alpha/\overline{\alpha})$.
Hence it is enough to find $\alpha \in \oo_E^\times$ such that 
$\alpha+\overline{\alpha} \not\equiv 0 \bmod \pp_E$ and 
$\Phi_t(-\alpha/\overline{\alpha}) \not\equiv 0 \bmod \pp_E$. 
Since 
\[
|\{-\alpha/\overline{\alpha} \;|\; \alpha \in (\oo_E/\pp_E)^\times,\; \alpha+\overline{\alpha} \not= 0\}|
= \frac{(q^2-1)-(q-1)}{q-1} = q > N,
\]
we can find $\alpha \in \oo_E^\times$ satisfying the desired conditions.
\end{rem}

\begin{lem}\label{B=>A}
If $I_\gamma(\K_{m}) \not= \emptyset$, 
then $J_\gamma(\K_{m,H}) \not= \emptyset$. 
\end{lem}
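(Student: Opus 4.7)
The plan is to construct $t \in J_\gamma(\K_{m,H})$ from the given $\tl{s} \in I_\gamma(\K_m)$ in two phases: first, produce the topologically semisimple part $t$ via Lemma \ref{lem_hyper} applied inside the embedded hyperspecial subgroup $\iota^{-1}(\K_m)$ of $G'$; second, produce the companion element $v \in \K_{m,H} \cap H_t$ by invoking Lemma \ref{lemA} to build an auxiliary $\tl{s}_0$ with $G_{\tl{s}_0} = H_t$ on the nose, and then transporting the original $u$ through a $\K_m$-conjugation that takes $\tl{s}$ to $\tl{s}_0$.

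For the first phase, I would apply Proposition \ref{eigen=1}(1) to conjugate $\tl{s}$ into block form, so that $s = \iota(s')$ for some $s' \in \iota^{-1}(\K_m)$. Since $\iota$ intertwines $\theta'$ with $\theta$, and since $\iota^{-1}(\K_m)$ is a $\theta'$-stable hyperspecial maximal compact of $G'$ with $\iota^{-1}(\K_m) \cap H'$ hyperspecial in $H'$, Lemma \ref{lem_hyper} applied to $\tl{s}' := s' \rtimes \theta'$ produces a topologically semisimple element $t' \in \iota^{-1}(\K_m) \cap H'$ that is a norm of $\tl{s}'$. Setting $t := \iota(t') \in \K_{m,H}$ then gives a topologically semisimple element that is $\H(\overline{F})$-conjugate to $\iota(N'(s')) = N(s)$, hence to $\gamma_s$.

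For the second phase, I would apply Lemma \ref{lemA} to this $t$ to obtain $s_0 \in \oo_E[t]$ such that $\tl{s}_0 := s_0 \rtimes \theta \in \tl{\K}_m$ is topologically semisimple with $t = N(s_0)$; the same argument as in the proof of Proposition \ref{90}(2) then yields $G_{\tl{s}_0} = H_t$ as algebraic subgroups of $\G$. Since both $\tl{s}$ and $\tl{s}_0$ have $t$ as a norm and the norm correspondence $\AA$ is bijective on semisimple classes, $\tl{s}$ and $\tl{s}_0$ are $\G(\overline{F})$-conjugate, hence $\K_m$-conjugate by Proposition \ref{71}(1): say $\tl{s}_0 = k \tl{s} k^{-1}$ for some $k \in \K_m$. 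Setting $u_0 := k u k^{-1}$, we get $u_0 \in \K_m \cap G_{\tl{s}_0} = \K_m \cap H_t \subseteq \K_{m,H} \cap H_t$, and then $v := u_0^2 \in \K_{m,H} \cap H_t$ satisfies $tv = \tl{s}_0^2 u_0^2 = (\tl{s}_0 u_0)^2 = k(\tl{s} u)^2 k^{-1}$, which is $\H(\overline{F})$-conjugate (via the element $k \in G$) to $(\tl{s} u)^2$ and hence to $\gamma$. This shows $t \in J_\gamma(\K_{m,H})$.

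The main obstacle is that $u$ naturally lives in $G_{\tl{s}}$ while $v$ must lie in $H_t$, and these two subgroups of $\G$ coincide only over $\overline{F}$ in general. The remedy is to introduce the polynomial-in-$t$ element $\tl{s}_0$ via Lemma \ref{lemA}, for which $G_{\tl{s}_0} = H_t$ holds as subgroups of $\G$ over $F$, and then to exploit the rigidity of topologically semisimple $\K_m$-conjugacy classes from Proposition \ref{71}(1) to move $\tl{s}$ onto $\tl{s}_0$, carrying $u$ into $H_t$ along the way.
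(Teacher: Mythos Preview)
Your overall strategy matches the paper's almost exactly: reduce $\tl{s}$ to block form, use Lemma~\ref{lem_hyper} inside $G'$ to produce $t$, then use Lemma~\ref{lemA} to build $\tl{s}_0$ with $G_{\tl{s}_0}=H_t$, conjugate $\tl{s}$ onto $\tl{s}_0$, and square $u_0$ to get $v$. There is, however, one genuine gap and one related formal slip.

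The gap is your direct invocation of Proposition~\ref{71}(1) to conclude that $\tl{s}$ and $\tl{s}_0$ are $\K_m$-conjugate. Proposition~\ref{71} is stated and proved only for a \emph{hyperspecial} maximal compact $\K$ of $G$ with $\theta(\K)=\K$, and $\K_m$ is not hyperspecial in $G=\GL_{2n+1}(E)$ for $m>0$. The paper does not apply Proposition~\ref{71} to $\K_m$ directly; it appeals instead to ``the argument in Corollary~\ref{leq1}.'' That argument first uses Proposition~\ref{eigen=1}(1) to put \emph{both} elements in block form $\iota(s'_i)\rtimes\theta$, then passes via the $\kappa$-conjugation from $\theta'$ to $\theta$ on $G'$, and finally applies Proposition~\ref{71}(1) to the genuinely hyperspecial subgroup $\K'_m=\iota^{-1}(\K_m)$ of $G'$. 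You have only put $\tl{s}$ in block form, not $\tl{s}_0$; and even once both are in block form you still need the $\kappa$-conjugation to land in the setting of Proposition~\ref{71}(1), since that proposition (and the lemmas feeding into it) are stated for the involution built from $J_N$, not $J'_{2n}$.

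The same point bites your first phase: Lemma~\ref{lem_hyper} is formulated for the involution $\theta$ (with matrix $J_N$), whereas you apply it to $s'\rtimes\theta'$. The paper handles this by noting that $\theta'(x)=\kappa\,\theta(\kappa^{-1}x\kappa)\,\kappa^{-1}$, so that $\kappa^{-1}s'\kappa\rtimes\theta$ is topologically semisimple in $\tl\K'_m$, applying Lemma~\ref{lem_hyper} there to get $t'\in\K'_m\cap G'_\theta$, and then setting $t=\iota(\kappa t'\kappa^{-1})\in\K_{m,H}$. Once you route both phases through $\kappa$ in this way, your argument becomes the paper's.
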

\begin{proof}
Take $\tl{s} = s \rtimes \theta \in I_\gamma(\K_m)$. 
By Proposition \ref{eigen=1} (1), we may assume that 
$s = \iota(s')$ for some $s' \in \K_m' = \iota^{-1}(\K_m) \subset G'$.
As we have seen in the proof of Corollary \ref{leq1}, 
$\kappa^{-1}s'\kappa \rtimes \theta$ is topologically semisimple, 
where $\kappa \in \K_m'$ is as in Section \ref{sec.groups}. 
By Lemma \ref{lem_hyper}, 
there is $t' \in \K_m' \cap G'_{\theta}$ such that 
$t'$ is a norm of $\kappa^{-1}s'\kappa \rtimes \theta$. 
This means that there exists $g' \in \G'(\overline{F})$
such that $g't'g'^{-1} = \kappa^{-1}s'\kappa \theta(\kappa^{-1}s'\kappa)$. 
Since $\theta'(x) = \kappa \cdot \theta(\kappa^{-1}x\kappa) \cdot \kappa^{-1}$, 
we have 
$(\kappa g' \kappa^{-1})\cdot(\kappa t' \kappa^{-1})\cdot(\kappa g' \kappa^{-1})^{-1}
= s' \theta'(s')$.
If we set $t = \iota(\kappa t' \kappa^{-1})$ and 
$g = \iota(\kappa g' \kappa^{-1})$, 
then we have $t \in \K_{m,H}$ and $gtg^{-1} = s\theta(s)$.
Hence $t$ is a norm of $\tl{s}$, 
which is equivalent to saying that 
$t$ is $\H(\overline{F})$-conjugate to $\gamma_s$.
\par

By applying Lemma \ref{lemA} (or Remark \ref{A=>B_easy} if $q > N$) to $t \in \K_{m,H}$, 
one can obtain $\tl{s}_0 = s_0 \rtimes \theta \in \tl\K_{m}$ such that $t = s_0\theta(s_0)$ and $G_{\tl{s}_0} = H_t$. 
Since $\tl{s}, \tl{s}_0 \in \tl\K_m$, by the argument in Corollary \ref{leq1}, 
we have $k \in \K_m$ such that $\tl{s}_0 = k\tl{s}k^{-1}$.
Then $H_t = G_{\tl{s}_0} = kG_{\tl{s}}k^{-1}$.
\par

Suppose now that $\gamma$ is a norm of $\tl{s}u$ with $u \in \K_m \cap G_{\tl{s}}$. 
Set $u_0 = kuk^{-1} \in \K_m \cap G_{\tl{s}_0}$ and $v = u_0^2 \in \K_{m,H} \cap H_t$.
Then we have 
\begin{align*}
tv = s_0\theta(s_0)u_0^2 = (\tl{s}_0u_0)^2 = (k\tl{s}uk^{-1})^2 = k (\tl{s}u)^2 k^{-1}. 
\end{align*}
It means that $tv$ is a norm of $\tl{s}u$, 
which is equivalent to saying that 
$tv$ is $\H(\overline{F})$-conjugate to $\gamma$.
Therefore, we conclude that $t \in J_\gamma(\K_{m,H})$.
\end{proof}

Now Proposition \ref{bij} 
follows from Corollary \ref{leq1} together with Lemmas \ref{A=>B} and \ref{B=>A}.

\section{Local Langlands correspondence and local newforms}\label{sec.newforms}
In this section, we prove Theorem \ref{main1} as an application of Theorem \ref{transfer}. 
To do this, we recall the local Langlands correspondence.

\subsection{Local Langlands correspondence for $\u_{2n+1}$}
Let $W_E \subset W_F$ be the Weil groups of $E$ and $F$, respectively. 
Fix $s \in W_F \setminus W_E$. 
Set $\WD_E = W_E \times \SL_2(\C)$ to be the Weil--Deligne group of $E$. 
We call a representation $\phi \colon \WD_E \rightarrow \GL_N(\C)$
\emph{conjugate orthogonal}
if there exists a non-degenerate bilinear form $B \colon \C^N \times \C^N \rightarrow \C$ 
such that 
\[
\left\{
\begin{aligned}
&B(\phi(w)v, \phi(sws^{-1})v') = B(v,v'), \\
&B(v',v) = B(v,\phi(s^2)v')
\end{aligned}
\right. 
\]
for $v,v' \in \C^N$ and $w \in \WD_E$. 
\par

Set $H = \u_{2n+1}$ to be the quasi-split unitary group with $2n+1$ variables as in the previous sections. 
Let $\Phi_\temp(H)$ be the set of equivalence classes of 
conjugate orthogonal representations $\phi$ of $\WD_E$ of dimension $2n+1$
such that $\phi(W_E)$ is bounded, 
and let $\Irr_\temp(H)$ be the set of equivalence classes of 
irreducible tempered representations of $H$.
\par

For $\pi \in \Irr_\temp(H)$, 
we have the Harish-Chandra character $\Theta_\pi \colon C_c^\infty(H) \rightarrow \C$, 
which is defined by 
\[
\Theta_\pi(f^H) 
= \tr\left(v \mapsto
\int_{H} f^H(h) \pi(h)v dh
\right), \quad f^H \in C_c^\infty(H).
\]
On the other hand, for $\phi \in \Phi_\temp(H)$, 
let $\pi_\phi$ be the irreducible tempered representation of $G = \GL_{2n+1}(E)$ 
associated with $\phi$
by the local Langlands correspondence for $\GL_{2n+1}(E)$. 
Since $\pi_\phi$ is conjugate self-dual, 
we have a nonzero intertwining operator 
$I \colon \pi_\phi \xrightarrow{\sim} \pi_\phi^\theta$ with $I^2 = \id$. 
Note that $I$ is unique up to $\{\pm1\}$. 
We normalize $I$ by using the fixed $\theta$-stable $F$-splitting of $G$. 
Then we can consider the twisted trace
\[
\Theta_{\pi_\phi, \theta}(f)
= \tr\left(v \mapsto
\int_{G} f(g \rtimes \theta) (\pi(g) \circ I)v dg
\right), \quad f \in C_c^\infty(\tl{G}).
\]
\par

The local Langlands correspondence for $H$ is as follows.
\begin{thm}[{\cite[Theorem 3.2.1]{Mok}}]\label{LLC}
For $\phi \in \Phi_\temp(H)$, 
there exists a finite subset $\Pi_\phi$ of $\Irr_\temp(H)$ such that 
\[
\Irr_\temp(H) = \bigsqcup_{\phi \in \Phi_\temp(H)} \Pi_\phi. 
\]
Moreover, $\Pi_\phi$ is characterized by the endoscopic character relation 
\[
\Theta_{\pi_\phi, \theta}(f) = \sum_{\pi \in \Pi_\phi} \Theta_{\pi}(f^H)
\]
for any $f \in C_c^\infty(\tl{G})$ and $f^H \in C_c^\infty(H)$ such that $f^H$ is a transfer of $f$.
\end{thm}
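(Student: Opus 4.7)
The plan is to follow the strategy of Arthur's monograph for orthogonal and symplectic groups, adapted to unitary groups, which is precisely what Mok carried out. The overall framework is a comparison of the twisted Arthur--Selberg trace formula for $\G = \Res_{E/F}\GL_{N}$ with the stable trace formula for $\H = \u_{2n+1}$ (and its endoscopic relatives). Concretely, one would first establish globally the stabilization of the twisted trace formula for $(\G,\theta)$ and of the ordinary trace formula for $\H$, pass to an equality of stable distributions, and then localize at a place to extract the required character identity. The existence of the transfer and the fundamental lemma (due to Ng\^o, Laumon--Ng\^o, and Waldspurger) are what make this comparison meaningful at the level of orbital integrals; Theorem \ref{transfer} above is in the same spirit but for a particular non-spherical function.

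The detailed steps I would carry out are as follows. First, I would set up the \emph{elliptic part} of the stable twisted trace formula for $(\G,\theta)$ and identify its spectral side in terms of conjugate self-dual discrete automorphic representations of $\GL_N(\mathbb{A}_E)$, using Jacquet--Shalika and Moeglin--Waldspurger. Second, I would perform the stabilization on the geometric side, grouping the terms according to endoscopic data; by Kottwitz--Shelstad this produces a sum over endoscopic groups (for odd $N$, only the standard base change endoscopic group $\H$ contributes to the elliptic tempered part with trivial central character twist). Third, I would run an induction on the \emph{simple} conjugate self-dual parameters $\phi$: assuming the local packet $\Pi_{\phi'}$ has been constructed for all proper sub-parameters, one isolates the contribution of $\phi$ to the spectral side of the stable trace formula of $\H$ and extracts a finite multi-set of local tempered representations satisfying the endoscopic character relation
\[
\Theta_{\pi_\phi,\theta}(f) = \sum_{\pi \in \Pi_\phi} \Theta_\pi(f^H)
\]
for matching pairs $(f,f^H)$. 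The uniqueness of the packet and the disjointness of the packets follow from linear independence of twisted characters on $\tl G$ together with the density of orbital integrals, via Kazhdan's trick of varying the test function at auxiliary places.

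The hard part, and what occupies the bulk of Mok's work, is showing that the packets obtained in this way exhaust $\Irr_\temp(H)$ and that each $\Pi_\phi$ is finite with the expected internal structure (a torsor under $\mathrm{Irr}(S_\phi)$). This requires a simultaneous global induction establishing the \emph{stable multiplicity formula}, which feeds back into the local character relation via a seed-and-propagate argument across sufficiently many global fields with prescribed local behaviour. Technical obstacles include: (i) the need to globalize every local tempered $L$-parameter and every local tempered representation to a discrete automorphic object with controlled behaviour at other places, which uses Shin's or Arthur's simple trace formula arguments together with Poincar\'e series; (ii) handling the non-standard twisted endoscopic transfer factors and the Whittaker normalization of the intertwining operator $I$ on $\pi_\phi$, which must be done using the fixed $\theta$-stable $F$-splitting so that $\Theta_{\pi_\phi,\theta}$ is well defined; and (iii) the fact that for unitary groups the comparison involves \emph{two} endoscopic groups $\u_N^{\pm}$ and a sign ambiguity coming from the Hasse principle that has to be resolved by comparing with elliptic characters of real unitary groups. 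Once these are in place, the endoscopic character identity displayed in the theorem characterizes $\Pi_\phi$ uniquely because strongly $\G$-regular semisimple orbital integrals separate tempered characters.
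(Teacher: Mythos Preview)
The paper does not prove this statement: Theorem~\ref{LLC} is simply quoted from \cite[Theorem~3.2.1]{Mok} and used as a black box in Section~\ref{sec.newforms}. There is therefore no ``paper's own proof'' to compare your proposal against. Your outline is a reasonable high-level sketch of the Arthur--Mok strategy behind \cite{Mok}, but for the purposes of this paper no such argument is needed; the authors take Mok's result as input and use only the endoscopic character relation displayed in the theorem, together with Theorem~\ref{transfer}, to deduce Theorem~\ref{packet=1}.
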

We call $\Pi_\phi$ the \emph{$L$-packet} associated with $\phi$. 
When $\pi \in \Pi_\phi$, we say that $\phi$ is the \emph{$L$-parameter} for $\pi$.

\subsection{Multiplicity one in $L$-packets}\label{sec.mult1}
Fix a non-trivial additive character $\psi_E \colon E \rightarrow \C^\times$
such that $\psi_E|_{\oo_E} = \1$ but $\psi_E|_{\pp_E^{-1}} \not= \1$. 
For $\phi \in \Phi_\temp(H)$, 
let $\ep(s,\phi,\psi_E)$ be the $\ep$-factor associated with $\phi$ and $\psi_E$. 
We define the \emph{conductor} $c(\phi)$ of $\phi$ by 
the non-negative integer satisfying 
\[
\ep(s, \phi, \psi_{E}) = q^{c(\phi)(1-2s)}\ep(1/2, \phi, \psi_E). 
\]
Here, we note that $|\oo_E/\pp_E| = q^2$, 
and that $c(\phi)$ is independent of the choice of $\psi_E$.
\par

\begin{rem}
By \cite[Proposition 5.2 (2)]{GGP}, 
if $\psi_E$ is trivial on $F$, then $\ep(1/2, \phi, \psi_E) = 1$. 
\end{rem}

As an application of Theorem \ref{transfer}, we prove the following theorem.
\begin{thm}\label{packet=1}
For $\phi \in \Phi_\temp(H)$, we have 
\[
\sum_{\pi \in \Pi_\phi} \dim(\pi^{\K_{m,H}}) = 
\left\{
\begin{aligned}
&0 \iif m < c(\phi), \\
&1 \iif m = c(\phi).
\end{aligned}
\right. 
\]
Moreover, 
if $m > c(\phi)$, then the left hand side is nonzero.
\end{thm}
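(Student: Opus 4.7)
The plan is to apply the endoscopic character relation of Theorem \ref{LLC} to the pair of matching functions furnished by Theorem \ref{transfer}, and then to read off the dimension identities from the Jacquet--Piatetskii-Shapiro--Shalika newform theory for $\GL_{2n+1}(E)$. First I would take $f^H = \vol(\K_{m,H}; dh)^{-1}\1_{\K_{m,H}}$ and $f = \vol(\K_m; dg)^{-1}\1_{\tl\K_m}$; Theorem \ref{transfer} guarantees that $f^H$ is a transfer of $f$.

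For each $\pi \in \Pi_\phi$, the operator $\pi(f^H)$ is the averaging projection onto $\pi^{\K_{m,H}}$, so $\Theta_\pi(f^H) = \dim(\pi^{\K_{m,H}})$. On the twisted side, since $\theta(\K_m) = \K_m$, the intertwiner $I \colon \pi_\phi \xrightarrow{\sim} \pi_\phi^\theta$ commutes with the $\K_m$-action and therefore preserves $\pi_\phi^{\K_m}$; a parallel computation identifies $\Theta_{\pi_\phi, \theta}(f)$ with $\tr(I|_{\pi_\phi^{\K_m}})$. Theorem \ref{LLC} then yields the key identity
\[
\sum_{\pi \in \Pi_\phi} \dim(\pi^{\K_{m,H}}) = \tr\bigl(I|_{\pi_\phi^{\K_m}}\bigr).
\]
Using the standard equality $c(\pi_\phi) = c(\phi)$ between the analytic and Galois conductors, the JPSS theory (extended in \cite{AKY}) gives $\pi_\phi^{\K_m} = 0$ for $m < c(\phi)$ and $\dim\pi_\phi^{\K_m} = 1$ for $m = c(\phi)$. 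In the former case the right-hand side vanishes, and in the latter $I$ acts by a scalar $\varepsilon \in \{\pm 1\}$, so non-negativity of the left-hand side forces $\varepsilon = +1$ and value $1$. A byproduct is that the essential vector of $\pi_\phi$ is automatically $I$-fixed.

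For the remaining assertion $m > c(\phi)$, non-negativity of the left-hand side again reduces matters to showing $\tr(I|_{\pi_\phi^{\K_m}}) > 0$. The dimension $\dim(\pi_\phi^{\K_m})$ is positive by JPSS, so what is needed is control over the action of $I$ on the entire oldform space. I would establish this by producing an $I$-stable basis of $\pi_\phi^{\K_m}$, for example by constructing explicit level-raising operators on the $\GL_{2n+1}$-side that commute with $I$ and applying them to the $I$-invariant essential vector from the preceding case. This compatibility check between $I$ and the JPSS oldform construction is the main obstacle; once each such basis vector is verified to be a $+1$-eigenvector of $I$, one obtains $\tr(I|_{\pi_\phi^{\K_m}}) = \dim(\pi_\phi^{\K_m}) > 0$, and the ``moreover'' clause follows.
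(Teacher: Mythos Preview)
Your argument for $m \le c(\phi)$ is the same as the paper's: apply Theorem~\ref{transfer} and the character identity of Theorem~\ref{LLC}, identify the twisted trace with $\tr(I|_{\pi_\phi^{\K_m}})$, and invoke JPSS together with non-negativity.

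For the ``moreover'' clause your route diverges from the paper's, and here your proposal is only a plan with an acknowledged gap. You propose to build an $I$-equivariant oldform basis via level-raising operators and then conclude $\tr(I|_{\pi_\phi^{\K_m}}) = \dim \pi_\phi^{\K_m}$. That equality is \emph{stronger} than what is needed, and you have not verified that the standard JPSS/Reeder level-raising maps commute with $I$; indeed they need not individually, so one would have to symmetrize and check that the resulting vectors still span. The paper sidesteps this entirely with two observations:
\begin{itemize}
\item By Reeder \cite[(2.2) Theorem~1]{Reeder}, $\dim \pi_\phi^{\K_{c(\phi)+1}} = 2n+1$ is \emph{odd}; since $I^2=\id$, its eigenvalues on this space are $\pm1$ and therefore $\tr(I|_{\pi_\phi^{\K_{c(\phi)+1}}}) \ne 0$. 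This gives positivity at $m=c(\phi)+1$ without knowing the eigenspace decomposition.
\item A conjugation by $\diag(\varpi I_n,1,\varpi^{-1}I_n)$ shows $\K_{m+2,H}$ is contained (up to conjugacy) in $\K_{m,H}$, so $\sum_\pi \dim \pi^{\K_{m,H}}$ is monotone in steps of $2$. Combined with the values $1$ at $m=c(\phi)$ and $>0$ at $m=c(\phi)+1$, this covers all $m>c(\phi)$.
\end{itemize}
Thus the paper never needs to analyze the $I$-action on the full oldform space; a parity argument at a single level plus monotonicity suffices. Your approach could in principle be completed, but it requires work you have not done, whereas the paper's argument is short and self-contained.
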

\begin{proof}
Note that for $\pi \in \Irr_\temp(H)$, 
we have
\[
\dim(\pi^{\K_{m,H}}) = \Theta_\pi(\vol(\K_{m,H}; dh)^{-1}\1_{\K_{m,H}}).
\]
Hence by Theorems \ref{transfer} and \ref{LLC}, 
we have 
\[
\sum_{\pi \in \Pi_\phi} \dim(\pi^{\K_{m,H}}) = 
\Theta_{\pi_\phi, \theta}(\vol(\K_m; dg)^{-1}\1_{\tl\K_m}).
\]
\par

Since $\pi(g) \circ I = I \circ \pi(\theta(g))$ and $\theta(\K_m) = \K_m$, 
we see that $I$ preserves $\pi_\phi^{\K_m}$. 
Moreover, the image of the operator 
\[
v \mapsto \vol(\K_m; dg)^{-1} \int_{G} \1_{\tl\K_m}(g \rtimes \theta) (\pi(g) \circ I)v dg
\]
is equal to $\pi_\phi^{\K_m}$, and 
the restriction of this operator to $\pi_{\phi}^{\K_m}$ coincides with $I$.
Hence 
\[
\Theta_{\pi_\phi, \theta}(\vol(\K_m; dg)^{-1}\1_{\tl\K_m}) = \tr\left(I; \pi_\phi^{\K_m}\right).
\]
\par

Since
\[
\K_m = 
\begin{pmatrix}
I_n && \\
&1& \\
&&\varpi^m I_n
\end{pmatrix}
\begin{pmatrix}
\oo_E & \oo_E & \oo_E \\
\pp_E^m & 1+\pp_E^m & \pp_E^m \\
\oo_E & \oo_E & \oo_E
\end{pmatrix}
\begin{pmatrix}
I_n && \\
&1& \\
&&\varpi^m I_n
\end{pmatrix}^{-1},
\]
by \cite[(5.1) Th{\'e}or{\`e}me]{JPSS}, 
we have
\[
\dim(\pi_\phi^{\K_m}) = 
\left\{
\begin{aligned}
&0 \iif m < c(\phi), \\
&1 \iif m = c(\phi).
\end{aligned}
\right. 
\]
In particular, if $m < c(\phi)$, 
then 
\[
\sum_{\pi \in \Pi_\phi} \dim(\pi^{\K_{m,H}}) = \tr\left(I; \pi_\phi^{\K_m}\right) = 0.
\]
On the other hand, if $m = c(\phi)$, then 
\[
\sum_{\pi \in \Pi_\phi} \dim(\pi^{\K_{m,H}}) = \tr\left(I; \pi_\phi^{\K_m}\right) \in \{\pm1\}.
\]
Since the left hand side is a non-negative integer, it should be equal to $1$.
\par

Similarly, since $\dim(\pi_{\phi}^{\K_{c(\phi)+1}}) = 2n+1$ by \cite[(2.2) Theorem 1]{Reeder}, 
and since $I^2 = \id$,
we see that $\tr(I; \pi_\phi^{\K_{c(\phi)+1}}) \not= 0$ 
so that $\sum_{\pi \in \Pi_\phi} \dim(\pi^{\K_{c(\phi)+1,H}}) > 0$.
Since
\[
\begin{pmatrix}
\varpi I_n && \\
&1& \\
&&\varpi^{-1} I_n
\end{pmatrix}
\K_{m+2, H}
\begin{pmatrix}
\varpi I_n && \\
&1& \\
&&\varpi^{-1} I_n
\end{pmatrix}^{-1}
\subset \K_{m,H}, 
\]
we have $\dim(\pi^{\K_{m+2,H}}) \geq \dim(\pi^{\K_{m,H}})$ for any $m \geq 0$. 
Therefore, we conclude that 
\[
\sum_{\pi \in \Pi_\phi} \dim(\pi^{\K_{c(\phi)+m,H}}) > 0
\] 
for any $m > 0$.
\end{proof}

\subsection{Generic representations}
Let $U \subset H$ be the subgroup consisting of upper triangular unipotent matrices. 
By abuse of notation, we denote the character
\[
U \ni u = (u_{i,j})_{i,j} \mapsto \psi_{E}(u_{1,2}+\dots+u_{n,n+1}) \in \C^\times
\]
by $\psi_E$. 
We say that an irreducible representation $\pi$ of $H$ is \emph{generic}
if $\Hom_{U}(\pi, \psi_E) \not= 0$. 
\par

Recall that for $\phi \in \Phi_\temp(H)$, 
the $L$-packet $\Pi_\phi$ contains a unique generic representation 
(see \cite[Corollary 9.2.4]{Mok} and \cite[Theorem 3.1]{At}). 
Now Theorem \ref{main1} follows from Theorem \ref{packet=1} and the following result. 

\begin{thm}\label{generic}
Let $\pi \in \Irr_\temp(H)$. 
If $\pi^{\K_{m,H}} \not= 0$ for some $m \geq 0$, 
then $\pi$ is generic.
\end{thm}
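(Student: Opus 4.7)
The plan is to implement the strategy outlined in the introduction: use a Gan--Savin--type argument to produce an irreducible unramified tempered representation $\pi'$ of the subgroup $H' \cong \u_{2n}$ of $H$ with $\Hom_{H'}(\pi \otimes \pi', \C) \neq 0$, and then invoke the local Gan--Gross--Prasad conjecture for tempered representations of unitary groups, established by Beuzart-Plessis in \cite{BP1,BP2,BP3}, to conclude that $\pi$ is generic.

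For the first step, fix a nonzero $v \in \pi^{\K_{m,H}}$ and a nonzero $v^\vee$ in the contragredient $\pi^\vee$, and consider the matrix coefficient $f_v(h) = \pair{\pi(h)v, v^\vee}$, which is bi-invariant under $\K_{m,H}$. From the block description of $\K_m$ in Section \ref{sec.groups}, together with the observation there that $\iota^{-1}(\K_m) \cap G'_{\theta'}$ is a hyperspecial maximal compact subgroup of $G'_{\theta'}$ (and hence, via conjugation by $\kappa$, that $\iota^{-1}(\K_{m,H})$ is conjugate to a hyperspecial maximal compact subgroup $K'$ of $H'$), one checks that the restriction $f_v|_{\iota(H')}$ is a nonzero function bi-invariant under $K'$. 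Following the argument of \cite[Lemma 12.5]{GS}: since $\pi$ is tempered, $f_v$ satisfies Harish-Chandra's $\Xi$-function estimates, and this allows $f_v|_{H'}$ to be paired nontrivially with matrix coefficients of tempered representations of $H'$. The $K'$-bi-invariance forces only irreducible unramified tempered representations of $H'$ to contribute to this pairing, so one extracts some irreducible unramified tempered $\pi'$ of $H'$ against which the pairing is nonzero. Unwinding this pairing produces a nonzero element of $\Hom_{H'}(\pi \otimes \pi'^\vee, \C)$.

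With this nonvanishing in hand, the tempered local GGP conjecture asserts that among all pairs in the Vogan packet of the $L$-parameters of $\pi$ and $\pi'^\vee$, exactly one pair has a nonzero $H'$-invariant functional, and the characterization in terms of component-group characters forces the representation on the quasi-split form to be the generic member of its $L$-packet. Since our $\pi$ lies on the quasi-split $H$ and contributes to such a nonzero Hom, this forces $\pi$ to be the unique generic representation in its $L$-packet $\Pi_\phi$. The main obstacle is the spectral/analytic step of constructing the unramified tempered $\pi'$: one must verify that the restriction $f_v|_{H'}$ actually pairs nontrivially with some unramified tempered representation, which is exactly the content of the Gan--Savin lemma and relies on Harish-Chandra's matrix coefficient estimates combined with either a Plancherel-type decomposition on $H'$ or a direct argument using the spherical Hecke algebra of $(H', K')$.
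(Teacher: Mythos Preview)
Your proposal is correct and follows essentially the same approach as the paper: form a bi-$\K_{m,H}$-invariant matrix coefficient, restrict to $H'$, apply the Gan--Savin argument \cite[Lemma 12.5]{GS} to produce an unramified tempered $\pi'$ of $H'$ with $\Hom_{H'}(\pi \otimes \pi', \C) \neq 0$, and then invoke the tempered local GGP conjecture of Beuzart-Plessis. The paper sharpens your final step by noting that the $L$-parameter of an unramified tempered $\pi'$ of $\u_{2n}$ necessarily has the shape $\phi_1 \oplus {}^c\phi_1^\vee$, so that the GGP recipe \cite[Proposition 5.1 (2)]{GGP} assigns the trivial character of the component group, which by \cite[Corollary 9.2.4]{Mok} singles out the generic member of $\Pi_\phi$.
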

\begin{proof}
As in Section \ref{sec.groups}, 
we consider another unramified unitary group
\[
H' = G'_{\theta'} = \{g' \in \GL_{2n}(E) \;|\; g' J_{2n}' {}^t\overline{g'} = J_{2n}'\}.
\]
Via the inclusion $\iota \colon H' \hookrightarrow H$ defined in that subsection, 
we regard $H'$ as a subgroup of $H$. 
Then $\K_{m,H} \cap H'$ is a hyperspecial maximal compact subgroup of $H'$. 
\par

Let $\pi \in \Irr_\temp(H)$ be such that $\pi^{\K_{m,H}} \not= 0$. 
We claim that there exists an irreducible tempered unramified representation $\pi'$ of $H'$ 
such that $\Hom_{H'}(\pi \otimes \pi', \C) \not= 0$. 
If this claim were to be shown, 
by the local Gan--Gross--Prasad conjecture (\cite[Conjecture 17.3]{GGP})
proven by Beuzart-Plessis \cite{BP1,BP2,BP3}, 
$\pi$ would be uniquely determined by the $L$-parameters for $\pi$ and $\pi'$. 
For the $L$-parameter for $\pi'$, see e.g., \cite[Sections 10]{GGP}. 
Since $\pi'$ is unramified, 
and since there exists only one unramified character $\chi$ of $E^\times$ 
such that $\chi|_{F^\times}$ is equal to the quadratic character associated to $E/F$, 
the $L$-parameter $\phi'$ for $\pi'$ should be of the form $\phi_1 \oplus {}^c\phi_1^\vee$
for some representation $\phi_1$ of $\WD_E$, 
where ${}^c\phi_1^\vee$ is the conjugate dual of $\phi_1$. 
Using \cite[Proposition 5.1 (2)]{GGP} and \cite[Corollary 9.2.4]{Mok}, 
we could conclude that $\pi$ is generic. 
\par

The claim is essentially the same as a lemma of Gan--Savin (\cite[Lemma 12.5]{GS}).
Fix $v \in \pi^{\K_{m,H}}$ such that $v \not= 0$. 
Let $\pair{\cdot,\cdot}$ be an $H$-invariant inner product on $\pi$. 
Then $f_\pi(h) = \pair{\pi(h)v,v}$ is a matrix coefficient of $\pi$. 
Moreover, it is bi-$\K_{m,H}$-invariant and $f_\pi(I_{2n+1}) \not= 0$.
Since $f_\pi|_{H'} \not= 0$, 
by the same argument as in the proof of \cite[Lemma 12.5]{GS}, 
we can find an irreducible tempered representation $\pi'$ of $H'$
and a matrix coefficient $f_{\pi'}$ of $\pi'$ such that 
\[
\int_{H'} f_{\pi}(h') f_{\pi'}(h') dh' \not= 0. 
\]
Here, the absolutely convergence of double integrals which we need 
was proven in \cite[Proposition 2.1]{H}. 
However, since $f_\pi$ is bi-$\K_{m,H}$-invariant, 
we can take $f_{\pi'}$ so that $f_{\pi'}$ is bi-$(\K_{m,H} \cap H')$-invariant. 
Then $\pi'$ must be unramified and $\Hom_{H'}(\pi \otimes \pi', \C) \not= 0$, as desired. 
\end{proof}

\appendix
\section{Faithfully flat descent and Hilbert's Theorem 90}\label{appA}
In this appendix, we generalize Hilbert's Theorem 90 (Proposition \ref{general90}) 
by using the faithfully flat descent. 
Using this proposition, we will prove Lemma \ref{lemA}, 
which was used in the proof of Lemma \ref{A=>B}.

\subsection{A generalization of Hilbert's Theorem 90}
Now we prove a generalization of Hilbert's Theorem 90. 

\begin{prop}\label{general90}
Let $f \colon A \rightarrow B$ 
be a ring homomorphism between commutative rings $A$ and $B$ with the unit element $1$, 
and 
let $G$ be a cyclic group of order $n$ with a generator $\sigma \in G$. 
Suppose that $G$ acts on $B$ from the left, 
and that 
\begin{enumerate}
\item
$f$ is faithfully flat, i.e., 
$B$ is a faithfully flat $A$-module; 
\item
$f(A)$ is contained in the $G$-invariant part $B^G$, 
which means that every $g \in G$ acts on $B$ as an $A$-homomorphism $B \rightarrow B$;
\item
the homomorphism $\phi \colon B \otimes_A B \rightarrow \prod_{g \in G} B$ 
defined by $b \otimes c \mapsto (b g(c))_{g \in G}$
is an isomorphism; 
\item
$\Pic(A) = 0$. 
\end{enumerate}
Then for any $x \in B$ with $\prod_{g \in G}g(x) = 1$, 
there exists $y \in B^\times$ such that $x = y/\sigma(y)$.
\end{prop}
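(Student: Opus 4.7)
The plan is to build a 1-cocycle out of $x$, twist $B$ into a $B$-module with semilinear $G$-action, and descend it to an invertible $A$-module, which is trivial because $\Pic(A)=0$. This is the standard shape of Hilbert 90 via Galois descent; conditions (1)--(3) are exactly the hypotheses needed to run Chase--Harrison--Rosenberg / faithfully flat descent for $B/A$ with group $G$.

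First I would observe that $x\in B^\times$: since $\prod_{g\in G}g(x)=1$, the element $x$ has $\prod_{j=1}^{n-1}\sigma^j(x)$ as an inverse. Then, writing $\sigma^0,\sigma,\dots,\sigma^{n-1}$ for the elements of $G$, define
\[
c(\sigma^i) := x\,\sigma(x)\,\sigma^2(x)\cdots\sigma^{i-1}(x)\in B^\times,\qquad 0\le i<n,
\]
with the convention $c(\sigma^0)=1$. The hypothesis $\prod_{i=0}^{n-1}\sigma^i(x)=1$ makes this well-defined mod $n$, and a straightforward check gives the cocycle identity $c(gh)=c(g)\,g(c(h))$ for $g,h\in G$.

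Next I would use $c$ to \emph{twist} the standard $B$-module structure on $B$: let $M$ denote $B$ as a $B$-module, equipped with the new semilinear $G$-action
\[
g\cdot_c b := c(g)\,g(b).
\]
The cocycle identity implies that this is indeed a group action by $A$-linear, $B$-semilinear automorphisms of $M$. Conditions (1)--(3) say precisely that $B$ is a (faithfully flat) $G$-Galois extension of $A$ in the sense of Chase--Harrison--Rosenberg, so faithfully flat descent provides an equivalence between $A$-modules and $B$-modules with compatible semilinear $G$-action, via $N\mapsto N\otimes_A B$ with quasi-inverse $M\mapsto M^G$. Applied to our $M$, we obtain an $A$-module $N := M^G$ together with a natural isomorphism $N\otimes_A B\cong M=B$; in particular $N$ is faithfully flat of rank $1$, hence an invertible $A$-module. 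By hypothesis (4), $\Pic(A)=0$, so $N\cong A$, and the image $y\in N\subset B$ of $1\in A$ is a free generator of $M$ over $B$, i.e.\ $y\in B^\times$. The relation $\sigma\cdot_c y=y$ unwinds to $c(\sigma)\sigma(y)=y$, that is, $x\,\sigma(y)=y$, giving $x=y/\sigma(y)$.

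The main technical point is the descent step: one must verify that condition (3), the isomorphism $B\otimes_A B\xrightarrow{\sim}\prod_{g\in G} B$, together with faithful flatness (1), really suffices to turn a semilinear $G$-action on a $B$-module into descent data and produce a finitely generated projective $A$-module of constant rank $1$. This is classical but is where the work of the argument lies; everything else is formal manipulation of the cocycle $c$.
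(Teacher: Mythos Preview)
Your proof is correct and is essentially the same as the paper's: the module $N = M^G = \{b \in B : x\,\sigma(b) = b\}$ is exactly the $A$-submodule the paper considers, and both arguments show it becomes free of rank one after base change to $B$ (the paper computes $N \otimes_A B \cong B$ by hand through $\phi$, while you invoke the Chase--Harrison--Rosenberg descent equivalence as a black box), then use $\Pic(A)=0$ and check that a generator lies in $B^\times$. The only difference is packaging, not substance.
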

\begin{proof}
Remark that the homomorphism $\phi$ in (3) is well-defined by (2). 
Consider 
\[
M = \{y \in B \;|\; \sigma(y) \cdot x = y\}. 
\]
It is an $A$-submodule of $B$. 
We have to show that $M \cap B^\times$ is not empty. 
\par

Since $B$ is a flat $A$-module, 
we have 
\[
M \otimes_A B = \{y_B \in B \otimes_A B \;|\; (\sigma \otimes \id_B)(y_B) \cdot (x \otimes 1) = y_B\}.
\]
We translate it using the isomorphism $\phi \colon B \otimes_A B \rightarrow \prod_{g \in G} B$.  
If we define an action of $\sigma$ on $\prod_{g \in G} B$ by 
\[
(b_g)_{g \in G} \mapsto (\sigma(b_{\sigma^{-1}g}))_{g \in G}, 
\]
then one can check that the diagram 
\[
\begin{CD}
B \otimes_A B @>\phi>> \prod_{g \in G}B \\
@V\sigma\otimes \id_{B}VV @VV\sigma V \\
B \otimes_A B @>\phi>> \prod_{g \in G}B
\end{CD}
\]
is commutative.
Since $\phi(x \otimes 1) = (x)_{g \in G}$, 
if we write $\phi(y_B) = (y_{B,g})_{g \in G}$, 
then 
the equality $(\sigma \otimes \id_B)(y_B) \cdot (x \otimes 1) = y_B$ holds
if and only if 
$\sigma(y_{B, \sigma^{-1}g}) x = y_{B,g}$ for any $g \in G$.
Such a $(y_{B,g})_{g \in G}$ is uniquely determined by $y_{B,1} \in B$. 
In fact, if we set $b = y_{B,1}$, then 
\[
y_{B, \sigma^k} = \sigma^k(b) \prod_{i=0}^{k-1}\sigma^i(x)
\]
for $k > 0$.
Here, we note that $y_{B,\sigma^n} = b$ by the $1$-cocycle condition on $x$.
Hence we deduce that $M_B \cong B$ as $B$-modules. 
\par

Since $f \colon A \rightarrow B$ is faithfully flat, 
we see that $M$ is a locally free $A$-module of rank one
(see \cite[Expos\'e VIII, Corollaire 1.11]{SGA1}).
However, since $\Pic(A) = 0$, 
we conclude that $M$ is a free $A$-module of rank one.
Choose an $A$-basis $y$ of $M$. 
Then $y_B = y \otimes 1$ is a $B$-basis of $M \otimes_A B$. 
If we write $\phi(y_B) = (y_{B,g})_{g \in G}$, 
then $y_{B,1}$ is a $B$-basis of $B$, i.e., $y_{B,1} \in B^\times$.
Since $y_{B, \sigma^k} = \sigma^k(y_{B,1}) \prod_{i=0}^{k-1}\sigma^i(x)$, 
we see that $(y_{B,g})_{g \in G} \in (\prod_{g \in G}B)^\times$, 
hence $y_B = y \otimes 1 \in (B \otimes_A B)^\times$.
\par

Consider a homomorphism $T_y \colon B \rightarrow B$ defined by $T_y(b) = yb$.
Then $T_y \otimes \id_B \colon B \otimes_A B \rightarrow B \otimes_A B$ is an isomorphism.
Since $f \colon A \rightarrow B$ is faithfully flat, 
we deduce that $T_y$ is already an isomorphism. 
Therefore, we conclude that $y \in B^\times$. 
\end{proof}

\begin{rem}
By the same argument, 
one would furthermore generalize Proposition \ref{general90} 
to the case where $G$ is a finite group.
\end{rem}

\subsection{A lemma}
Now we prove the following lemma, which was used in the proof of Lemma \ref{A=>B}.
\begin{lem}\label{lemA}
For any semisimple element $t \in \K_{m,H}$, 
there exists $\tl{s} = s \rtimes \theta \in \tl\K_m$ with $s \in \oo_E[t]$ 
such that $t = s\theta(s)$. 
\end{lem}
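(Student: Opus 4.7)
The plan is to apply Proposition~\ref{general90} to the pair of rings $A := B^\tau \subset B := \oo_E[t]$, where $B$ is the commutative $\oo_E$-subalgebra of $\M_N(E)$ generated by $t$. Because $B$ is commutative, the anti-automorphism $x \mapsto J_N{}^t\overline{x}J_N^{-1}$ of $\M_N(E)$ restricts to a genuine ring automorphism $\tau$ of $B$; it satisfies $\tau|_{\oo_E} = \overline{(\;\cdot\;)}$ and $\tau(t) = t^{-1}$ (the latter because $\theta(t) = t$). For $b \in B$ we have $\theta(b) = \tau(b)^{-1}$, so the desired equation $t = s\theta(s)$ is the Hilbert~90 relation $t = s/\tau(s)$ with $\sigma = \tau$, and the cocycle condition $t \cdot \tau(t) = 1$ holds since $\det t \in \oo_E^\times$ forces $t^{-1} \in B$ by Cayley--Hamilton.

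Verifying the four hypotheses of Proposition~\ref{general90} for $A \subset B$ would proceed as follows. Writing $B = A[t]$ with $t$ satisfying $X^2 - (t+t^{-1})X + 1 = 0$ over $A$ makes $B$ free of rank $2$ over $A$, giving faithful flatness; condition~(2) is automatic. For~(4), $A$ is a finitely generated $\oo_F$-module, hence semilocal, so every finitely generated projective $A$-module is free and $\Pic(A) = 0$. Condition~(3), that $\phi \colon B \otimes_A B \to B \times B$ is an isomorphism, is equivalent to the discriminant $(t - t^{-1})^2$ being a unit in $A$, i.e., $\pm 1$ is not an eigenvalue of $t$.

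The hard part will be verifying~(3) when $\pm 1$ is an eigenvalue of $t$. This case is actually typical, because the congruence $t_{00} \in 1+\pp_E^m$ makes $\overline{t} \bmod \pp_E$ have $1$ as an eigenvalue, and a priori this may lift to an exact eigenvalue of $t$. I would resolve this by Hensel lifting: the factorization of the characteristic polynomial of $\overline{t}$ into pairwise coprime pieces (separating $X \pm 1$ from the rest) lifts to a $\tau$-stable decomposition $B \cong B_{+1} \times B_{-1} \times B^{\flat}$, on which $t - t^{-1}$ is a unit in $B^{\flat}$. I would apply Proposition~\ref{general90} to $B^{\flat}$; on $B_{+1}$ take $s = 1$ (since $1\cdot\theta(1) = 1$); on $B_{-1}$ take $s = \epsilon \in \oo_E^\times$ with $\overline{\epsilon} = -\epsilon$ (so $\epsilon\theta(\epsilon) = \epsilon\overline{\epsilon}^{-1} = -1$). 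Gluing yields $s \in B^\times$ with $t = s\theta(s)$.

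It remains to show $s \in \K_m$. Since $s \in B$ is an $\oo_E$-polynomial in $t$, both $s$ and $s^{-1}$ preserve every $t$-invariant lattice, in particular the two lattices
\[
L_1 = \pp_E^{-m}\oo_E^{n} \oplus \oo_E \oplus \oo_E^{n}, \quad L_2 = \oo_E^{n} \oplus \oo_E \oplus \pp_E^{m}\oo_E^{n}.
\]
Hence $s \in \mathrm{Stab}(L_1) \cap \mathrm{Stab}(L_2)$, whose matrix shape matches that of $\K_m$ except that the central entry lies in $\oo_E^\times$ rather than $1+\pp_E^m$. To arrange the central condition, I would analyze the $(0,0)$-entries of $s\theta(s) = t$ and $s\cdot s^{-1} = I$: the stabilizer shapes force all cross-terms $s_{0k}(\cdots)_{k0}$ with $k \neq 0$ to lie in $\pp_E^m$, so $s_{00}\theta(s)_{00} \equiv 1$ and $s_{00}(s^{-1})_{00} \equiv 1 \bmod \pp_E^m$. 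Together with $\theta(s)_{00} = \overline{(s^{-1})_{00}}$ (from $\theta(s) = J_N{}^t\overline{s}^{-1}J_N^{-1}$), these give $s_{00} \equiv \overline{s_{00}} \bmod \pp_E^m$, so $s_{00} \in \oo_F^\times + \pp_E^m$. Finally, since $\theta(c) = c^{-1}$ for any scalar $c \in \oo_F^\times$, the rescaling $s \mapsto sc$ preserves $s\theta(s) = t$; choosing $c \in \oo_F^\times$ with $c \equiv s_{00}^{-1} \bmod \pp_F^m$ then places $s_{00}$ in $1+\pp_E^m$ and completes the proof.
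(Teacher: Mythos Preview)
Your overall strategy---apply Proposition~\ref{general90} to $A = B^\tau \subset B = \oo_E[t]$ and then adjust by a scalar in $\oo_F^\times$ to land in $\K_m$---is exactly the paper's, and your treatment of the final step (using the two $t$-stable lattices and the congruences on the $(0,0)$-entry) is correct and essentially identical to what the paper does. The problem is in how you verify the hypotheses of Proposition~\ref{general90}.

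You assert that $B = A[t]$ is free of rank $2$ with basis $\{1,t\}$, but this fails whenever $t - t^{-1}$ is not a unit: already for $t = 1$ one has $B = \oo_E$ while $A[t] = A = \oo_F$. More importantly, condition~(3) is \emph{not} equivalent to $(t-t^{-1})^2 \in A^\times$; it in fact holds unconditionally. The point you are missing is that the correct $A$-basis of $B$ is $\{1,\epsilon\}$ (with $\epsilon \in \oo_E^\times$, $\overline\epsilon = -\epsilon$), not $\{1,t\}$: since $p \neq 2$ one has $B = B^{\tau=1} \oplus B^{\tau=-1} = A \oplus \epsilon A$, so the natural map $A \otimes_{\oo_F}\oo_E \to B$ is an isomorphism. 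Then
\[
B \otimes_A B \;\cong\; A \otimes_{\oo_F}(\oo_E \otimes_{\oo_F}\oo_E) \;\cong\; A \otimes_{\oo_F}(\oo_E \times \oo_E) \;\cong\; B \times B
\]
because $\oo_E/\oo_F$ is unramified, and one checks this is exactly the map $\phi$. With this basis all four hypotheses are immediate and no case analysis on eigenvalues is needed; this is what the paper does.

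Your proposed workaround via a Hensel decomposition $B \cong B_{+1} \times B_{-1} \times B^\flat$ also does not prove the lemma as stated. The factor $B_{+1}$ collects the eigenvalues of $t$ that are \emph{congruent} to $1$ modulo $\pp_E$, not those \emph{equal} to $1$, so the image of $t$ in $B_{+1}$ need not be $1$ and taking $s = 1$ there does not solve $t = s\theta(s)$; likewise for $B_{-1}$. (Your patch would go through if $t$ were topologically semisimple, since then its eigenvalues are roots of unity of order prime to $p$ and reduction modulo $\pp_E$ is injective on those; that covers the applications in Lemmas~\ref{A=>B} and~\ref{B=>A}, but not the lemma as written.)
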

\begin{proof}
Write $B = \oo_E[t]$.
Then $B$ is a commutative $\oo_E$-algebra with an involution 
$\sigma(x) = J_N {}^t\overline{x} J_N^{-1}$. 
Note that if $x \in \GL_N(E)$, then $\theta(x) = \sigma(x)^{-1}$. 
Let $A$ be the $\oo_F$-subalgebra of $B$
consisting of $\sigma$-fixed elements in $B$.
Since $A$ is a finitely generated $\oo_F$-module, 
and since $\oo_F$ is a complete local ring, hence henselian, 
we see that $A$ is the direct product of finitely many local rings 
(see \cite[Chapitre I, \S1, D\'efinition 1 and \S2, (3)]{R}). 
Hence $\mathrm{Pic}(A) = 0$ (see e.g., \cite[Proposition 4.3.11]{Wei}). 
\par

Note that the canonical homomorphism $A \otimes_{\oo_F} \oo_E \rightarrow B$ is isomorphism.
Indeed, we have $\epsilon \in \oo_E^\times$ with $\overline{\epsilon} = -\epsilon$ such that 
$\{1,\epsilon\}$ is an $\oo_F$-basis of $\oo_E$. 
Then $\{1,\epsilon\}$ is also a basis of $B$ as an $A$-module. 
Therefore $B$ is a faithfully flat $A$-module. 
Moreover, 
since $E$ is unramified over $F$, 
the homomorphism 
\[
\phi \colon B \otimes_A B \rightarrow B \times B,\; 
b_1 \otimes b_2 \mapsto (b_1b_2, b_1\sigma(b_2))
\]
is isomorphism. 
\par

Therefore, we can apply Proposition \ref{general90}. 
Since $t = \theta(t) = \sigma(t)^{-1}$, 
we can find $s_0 \in B^\times$ such that $t = s_0/\sigma(s_0)$. 
Since $s_0$ and $s_0^{-1}$ are in $B = \oo_E[t]$ with $t \in \K_{m,H}$, both of them are of the form 
\[
\bordermatrix{
&n&1&n \cr
n&\oo_E&\oo_E&\pp_E^{-m}\cr
1&\pp_E^m&\oo_E&\oo_E\cr
n&\pp_E^m&\pp_E^m&\oo_E
}.
\]
Let $u, u' \in \oo_E$ be the $(n+1,n+1)$-entries of $s_0, s_0^{-1}$, respectively. 
Then we have $uu' \equiv u\overline{u'} \equiv 1 \bmod \pp_E^m$ since $t \in \K_{m,H}$.
Hence $u \in \oo_E^\times$, and $u \bmod \pp_E^m$ belongs to $(\oo_F/\pp_F^m)^\times$.
Therefore, we can find $z \in \oo_F^\times$ such that 
$s = z s_0$ satisfies $t = s/\sigma(s) = s\theta(s)$ and $s \in \K_m$.
\end{proof}


\end{document}